
\documentclass{compositio}


%
\usepackage{amsfonts}
\usepackage{mathrsfs}
\usepackage{amsmath}
\usepackage{amsmath,graphics}
\usepackage{amssymb}
\usepackage{indentfirst,latexsym,bm,amsmath,pstricks,amssymb,amsthm,graphicx,fancyhdr,float,color}
\usepackage[all]{xy}
\usepackage{amsthm}
\usepackage{amscd}
\usepackage{hyperref}
\usepackage[all]{hypcap}


\theoremstyle{definition}
  \newtheorem{definition}[subsubsection]{Definition}
  
  \newtheorem{definition-proposition}[subsubsection]{Definition-Proposition}
  \newtheorem{definition-lemma}[subsubsection]{Definition-Lemma}
  \newtheorem{example}[subsubsection]{Example}
  \newtheorem{remark}[subsubsection]{Remark}
  \newtheorem{remarks}[subsubsection]{Remarks}

\theoremstyle{plain}
  \newtheorem{theorem}[subsubsection]{Theorem}
  \newtheorem{proposition}[subsubsection]{Proposition}
  \newtheorem{lemma}[subsubsection]{Lemma}
  \newtheorem{corollary}[subsubsection]{Corollary}
  \newtheorem{conjecture}[subsubsection]{Conjecture}

\numberwithin{equation}{subsection}

\def\wt{\widetilde}
\def\ol{\overline}
\def\lra{\longrightarrow}

\def\Aut{\text{\rm{Aut\,}}}
\def\({$($}
\def\){$)$}

\def\bbp{\mathbb P}
\def\cala{\mathcal A}

\def\cale{\mathcal E}

\def\call{\mathcal L}
\def\calm{\mathcal M}

\def\calo{\mathcal O}
\def\calq{\mathcal Q}

\def\calt{\mathcal T}

\def\rank{\text{{\rm rank\,}}}

\def\osb{\omega_{\ol S/\ol B}}
\def\fosb{\bar f_*\omega_{\ol S/\ol B}}
\def\olb{\ol B}
\def\olc{\ol C}
\def\ols{\ol S}

\newcommand{\adele}{{{\mathbb{A}}_f}}
\newcommand{\Jac}{\mathrm{Jac}}

\newcommand{\Qbb}{\mathbb{Q}}
\newcommand{\Rbb}{\mathbb{R}}
\newcommand{\Zbb}{\mathbb{Z}}
\newcommand{\Zbhat}{\hat{\Zbb}}

\newcommand{\Mcal}{\mathcal{M}}
\newcommand{\Acal}{\mathcal{A}}
\newcommand{\Hscr}{\mathscr{H}}
\newcommand{\Cbb}{\mathbb{C}}
\newcommand{\la}{\leftarrow}
\newcommand{\ra}{\rightarrow}
\newcommand{\mono}{\hookrightarrow}
\newcommand{\Tcal}{\mathcal{T}}
\newcommand{\GSp}{\mathrm{GSp}}
\newcommand{\Sp}{\mathrm{Sp}}
\newcommand{\bsh}{\backslash}
\newcommand{\isom}{\simeq}
\newcommand{\Urm}{\mathrm{U}}
\newcommand{\Gbb}{\mathbb{G}}
\newcommand{\mrm}{\mathrm{m}}
\newcommand{\Gbf}{\mathbf{G}}
\newcommand{\Hbf}{\mathbf{H}}
\newcommand{\Xcal}{\mathcal{X}}
\newcommand{\Sbb}{\mathbb{S}}
\newcommand{\Res}{\mathrm{Res}}
\newcommand{\ad}{\mathrm{ad}}
\newcommand{\Ad}{\mathrm{Ad}}
\newcommand{\GL}{\mathrm{GL}}
\newcommand{\gfrak}{\mathfrak{g}}
\newcommand{\Lie}{\mathrm{Lie}}
\newcommand{\SL}{\mathrm{SL}}
\newcommand{\ibf}{\mathbf{i}}
\newcommand{\ot}{\overset}
\newcommand{\der}{\mathrm{der}}
\newcommand{\inv}{{-1}}

\newcommand{\Ker}{\mathrm{Ker}}
\newcommand{\Vbb}{\mathbb{V}}
\newcommand{\Acalbar}{\overline{\Acal}}

\newcommand{\Ocal}{\mathcal{O}}
\newcommand{\End}{\mathrm{End}}
\newcommand{\SU}{\mathbf{SU}}

\newcommand{\xbar}{\bar{x}}
\newcommand{\Spec}{\mathrm{Spec}\,}
\newcommand{\tr}{\mathrm{tr}}
\newcommand{\ifof}{if and only if\ }
\newcommand{\wrt}{with respect to\ }
\newcommand{\cosg}{compact open subgroup\ }

\newcommand{\Ubf}{\mathbf{U}}
\newcommand{\Pbb}{\mathbb{P}}
\newcommand{\abar}{\bar{a}}

\newcommand{\Ecal}{\mathcal{E}}
\newcommand{\Ucal}{{\mathcal{U}}}

\newcommand{\Cbar}{{\overline{C}}}
\newcommand{\Mbar}{{\overline{M}}}
\newcommand{\Vcal}{{\mathcal{V}}}
\newcommand{\Tbf}{{\mathbf{T}}}
\newcommand{\Xfrak}{\mathfrak{X}}
\newcommand{\Hom}{\mathrm{Hom}}
\newcommand{\Qac}{\overline{\mathbb{Q}}}
\newcommand{\GSpin}{\mathbf{GSpin}}
\newcommand{\Spin}{\mathbf{Spin}}
\newcommand{\Gr}{\mathrm{Gr}}
\newcommand{\Nbb}{\mathbb{N}}
\newcommand{\SO}{\mathbf{SO}}

\newcommand{\GU}{{\mathbf{GU}}}
\newcommand{\id}{\mathrm{id}}

\newcommand{\kbar}{{\bar{k}}}
\newcommand{\Fcal}{{\mathcal{F}}}

\newcommand{\fbar}{{\overline{f}}}
\newcommand{\Bbar}{{\overline{B}}}
\newcommand{\Sbar}{{\overline{S}}}
\newcommand{\minuszero}{{-1,0}}
\newcommand{\zerominus}{{0,-1}}
\newcommand{\Rep}{\mathrm{Rep}}

\newcommand{\Std}{{\mathrm{Std}}}
\newcommand{\Hcal}{\mathcal{H}}

\newcommand{\Nbf}{\mathbf{N}}
\newcommand{\Zbf}{\mathbf{Z}}
\newcommand{\MT}{\mathbf{MT}}

\begin{document}

\title[Oort conjecture for  Shimura varieties of unitary and orthogonal types]{On the Oort conjecture for  Shimura varieties of unitary and orthogonal types}
\author{Ke Chen}
\email{kechen@ustc.edu.cn}
\address{Department of mathematics, Nanjing University, Nanjing, Jiangsu Province, China, 210093; \space School of mathematics, University of Science and Technology of
China, Hefei, China, 230026}

\author{Xin Lu}
\email{luxin001@uni-mainz.de}
\address{Department of Mathematics, East China Normal University, Shanghai, China, 200241}
\curraddr{Institut f\"ur Mathematik, Universit\"at Mainz, Mainz, Germany, 55099}


\author{Kang Zuo}
\email{zuok@uni-mainz.de}
\address{Institut f\"ur Mathematik, Universit\"at Mainz, Mainz, Germany, 55099}

\dedication{In Memory of Professor Gang Xiao}
\classification{11G15, 14G35, 14H40.}
\keywords{Oort conjecture, Shimura varieties, Torelli locus, slope inequality.}

\begin{abstract}
In this paper we study the Oort conjecture concerning the non-existence of Shimura subvarieties contained generically in the Torelli locus in the Siegel modular variety $\Acal_g$. Using the poly-stability of Higgs bundles on curves and the slope inequality of Xiao on fibred surfaces, we show that a Shimura curve $C$ is not contained generically in the Torelli locus if its canonical Higgs bundles contains a unitary Higgs subbundle of rank at least $(4g+2)/5$. From this we prove that a Shimura subvariety of $\SU(n,1)$-type is not contained generically in the Torelli locus when a numerical inequality holds,
which involves the genus $g$, the dimension $n+1$, the degree $2d$ of CM field of the Hermitian space, and the type of the  symplectic representation defining the Shimura subdatum. A similar result holds for Shimura subvarieties of $\SO(n,2)$-type, defined by spin groups associated to quadratic spaces over a totally real number field of degree at least 6 subject to some natural constraints of signatures.
\end{abstract}

\maketitle


\section{Introduction}\label{section Introduction}
This paper is dedicated to the study of the Oort conjecture and we show that certain Shimura subvarieties  uniformized by Hermitian symmetric domains associated to $\SU(n,1)$ and $\SO(n,2)$ are not contained generically in the Torelli locus. In this section we recall the conjecture, review some related results, and explain the main idea of our work.

\subsection{Torelli locus and the conjecture of  Oort}\label{subsection Torelli locus and the conjecture of Oort}
We first recall the Oort conjecture for the Torelli locus. A more thorough survey of the subject is found in \cite{mo11}.

Fix $\ell\geq 3$ an integer, we have  $\calm_g=\calm_{g,\ell}$ the moduli space of smooth projective curves over  $\mathbb C$ of genus $g\geq 2$ with a full level $\ell$-structure, and  $\cala_g=\cala_{g,\ell}$  the moduli space of $g$-dimensional principally polarized  abelian varieties over $\mathbb C$ with a full level-$\ell$ structure.
In this paper we treat them as the moduli schemes over $\mathbb C$ of the corresponding moduli functors. No specific choice of the level $\ell(\geq3)$ is made because it is only imposed to assure the representability, which plays no essential role in our study.

The Torelli morphism $j^\circ:\Mcal_g\ra\Acal_g$ sends a curve $C$ in $\Mcal_g$ to its Jacobian $\Jac(C)$ endowed with its canonical polarization. We write $\Tcal_g^\circ$ for the image of $j^\circ$, and $\Tcal_g$ for the closure of $\Tcal_g^\circ$ in $\Acal_g$; $\Tcal_g$ is called the Torelli locus. It is known that $\Tcal_g^\circ$ is open in $\Tcal_g$, and is referred to as the open Torelli locus. We also have the hyperelliptic Torelli locus $\Tcal\Hcal_g$ in $\Tcal_g$, corresponding to Jacobians of hyperelliptic curves. A closed subvariety $Z\subseteq\cala_g$ of positive dimension is said to be contained {\emph generically} in the Torelli locus, if $Z\subseteq\calt_g$ and $Z\cap\calt_g^\circ\neq\emptyset$ both hold.

The moduli scheme $\Acal_g=\Acal_{g,\ell}$ actually admits a geometrically connected model over
$\Qbb(\zeta_\ell)$ with $\zeta_\ell$ being a primitive $\ell$-th root of 1, cf.\,\cite{mumfordGIT}.
As is explained in \cite{hida shimura,milne05,moonen98}, the moduli scheme $\Acal_g=\Acal_{g,\ell}$ is {  isomorphic to} a connected Shimura variety,
namely a geometrically connected component of the Shimura variety defined by the Shimura datum $(\GSp_{2g},\Hscr_g^\pm)$ associated to the $\Qbb$-group of symplectic similitude
$\GSp_{2g}$, using some compact open subgroup $K(\ell)\subset \GSp_{2g}(\adele)$, cf. Example \ref{the Siegel modular varieties and its Shimura subvarieties}.
In $\cala_{g}$ there are Shimura subvarieties (cf. Definition \ref{Shimura subvarieties}), which are moduli subspaces classifying abelian varieties with prescribed Hodge classes.
In the literature they are also called special subvarieties or Shimura subvarieties of Hodge type. These subvarieties are totally geodesic,
and they are locally symmetric, i.e., uniformized by Hermitian symmetric domains equivariantly embedded in $\Hscr_g^\pm$.
A more natural description of Shimura subvarieties involves the language of Shimura subdata,
which will be presented later in Section \ref{subsection Shimura varieties and Shimura subvarieties}.
In particular,  the Shimura subvarieties of dimension zero in $\Acal_g$ are exactly the CM points, i.e., points parameterizing CM abelian variety. Any Shimura subvariety contains a Zariski dense subset of CM points.

When $g=1,2,3$, the Torelli morphism is   of dense image. It was conjectured by Coleman \cite{coleman87} that when the genus $g$ is sufficiently large,
there should be at most finitely many CM points   contained in the open Torelli locus $\calt_g^\circ$.
Oort \cite{oort97} made the following conjecture by combining Coleman's conjecture with the conjecture of Andr\'e-Oort.

\begin{conjecture}[(Oort)]\label{Oort conjecture}
For $g$ large, there does not exist any Shimura subvarieties of strictly positive dimensions contained generically in the Torelli locus $\calt_g$.
\end{conjecture}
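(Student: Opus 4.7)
The plan is to attack the conjecture via a bundle-theoretic obstruction, not in one stroke but by isolating numerical conditions on Shimura subdata that are incompatible with being contained generically in $\calt_g$. The first reduction is to Shimura \emph{curves}: if a positive-dimensional Shimura subvariety $Z$ lies generically in $\calt_g$, then a general Shimura curve $C\subseteq Z$ (obtained by intersecting with a generic Hecke translate of a suitable sub-Shimura datum) inherits the same property, so it suffices to exclude Shimura curves $C\to\calm_g$ generically in $\calt_g$.

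For such a $C$, pull back the universal family of Jacobians to obtain a semistable family of curves $\fbar\colon\Sbar\to\Bbar$ over a smooth compactification $\Bbar$ of $C$. The Hodge bundle $\fbar_*\osb$ sits as the $(0,-1)$-piece of the canonical Higgs bundle of the variation of Hodge structure on $C$, and the Shimura structure forces a very rigid decomposition: there is a large \emph{unitary} (Higgs-trivial, degree-zero, flat) subbundle $\Ucal\subseteq\fbar_*\osb$ whose rank is read off from the $\Qbb$-symplectic representation defining the Shimura subdatum. For $\SU(n,1)$-type data the rank of $\Ucal$ grows roughly like $nd$, where $2d$ is the degree of the CM field of the Hermitian space, while $\rank\fbar_*\osb=g$; for $\SO(n,2)$-type data arising from spin representations one gets an analogous estimate controlled by the signatures at the real places.

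The next step is to confront this with the slope inequality of Xiao for the fibred surface $\fbar\colon\Sbar\to\Bbar$. A unitary subbundle $\Ucal\subseteq\fbar_*\osb$ has degree zero, and plugging the Harder--Narasimhan filtration into Xiao's inequality forces the slope $\deg\fbar_*\osb/(\osb^2)$ to be too small unless $\rank\Ucal$ is bounded. The concrete target promised in the abstract is a threshold of the form $\rank\Ucal\geq(4g+2)/5$: above it, Xiao's inequality is violated, so no such $C$ exists. Granted this threshold, the $\SU(n,1)$- and $\SO(n,2)$-cases reduce to computing $\rank\Ucal$ for each admissible $\Qbb$-symplectic representation and checking when the resulting quantity exceeds $(4g+2)/5$; this is where the numerical inequality in $(g,n,d,\text{signature type})$ appears.

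The main obstacle is not Xiao's inequality itself but the representation-theoretic bookkeeping. One must enumerate the $\Qbb$-irreducible symplectic representations of $\SU(n,1)$ and of the spin groups $\Spin(n,2)$ over a totally real number field, read off the Hodge-type decomposition of each isotypic summand at every real place, and identify which summands are Higgs-trivial (contributing to $\Ucal$) versus which carry the ampleness of the Hodge bundle. I expect the hardest part to be the spin case, where the representations are more subtle than the standard one and the interaction of signatures across the real places couples the unitary parts nontrivially; this is presumably why the abstract restricts to totally real fields of degree $\geq 6$ with natural signature constraints. Finally, one should be frank that the full Oort conjecture is out of reach by this method alone: for Shimura data such as $(\GSp_{2g},\Hscr_g^\pm)$ itself, or for subdata with no nontrivial unitary part in their canonical Higgs bundle, no contradiction can be produced from the Higgs/slope dichotomy, and a fundamentally new input would be required.
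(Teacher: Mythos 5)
You are not being asked to prove a theorem here: the statement is Oort's conjecture, which remains open. The paper does not prove it either; it establishes partial cases for $\SU(n,1)$- and $\SO(n,2)$-type subvarieties, and you have correctly recognized this and closed your proposal with an honest admission that the full conjecture is out of reach by this method. Taking your proposal as a reconstruction of the paper's strategy for those partial cases, the overall architecture is right: reduce to Shimura curves via Hecke translates, pull back to a semistable fibration, bound the unitary part of the Hodge bundle using Xiao's slope technique, and then do the representation-theoretic bookkeeping to decide when the threshold is hit.

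Two substantive points are missing or wrong, however. First, the sharp threshold $\rank\Ucal_C^{\minuszero}\geq(4g+2)/5$ is not obtained from Xiao's inequality plus the mere fact that $\Ucal$ has degree zero; that combination only gives the weaker bound $\rank\Ucal^{\minuszero}\leq(5g+1)/6$, which holds for an arbitrary (non-isotrivial) curve in $\Acal_g$. The sharper bound requires knowing that the complementary ample summand $\Fcal_\Cbar^{\minuszero}$ is \emph{poly-stable}, which is exactly what the Viehweg--Zuo Arakelov \emph{equality} (not just inequality) for Shimura curves supplies; after pullback to $\Bbar$ this gives semi-stability of $\Fcal_\Bbar^{\minuszero}$, so the Harder--Narasimhan filtration of $\fbar_*\osb$ has length two, and one can then invoke Clifford's theorem on a general non-hyperelliptic fiber (with a separate treatment for the hyperelliptic case) to improve the slope estimate. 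Without this poly-stability input your argument only reaches the $(5g+1)/6$ threshold and loses a factor that matters in the numerics of the main theorems. Second, your prediction that the spin case is the hardest is backwards: the paper finds the orthogonal case \emph{simpler}, because the restriction of a spinor representation $P$ of $\Spin(H_\tau)$ to the curve subgroup $\Spin(W_\tau)$ contributes no additional trivial (hence unitary) summand, so the maximal unitary part of $\Ecal_C$ is determined entirely by the definite real places of $F$. The coupling of signatures across real places that you anticipate does not occur, and this is why the orthogonal statement comes out cleaner, with the single hypothesis $d\geq 6$.
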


The Andr\'e-Oort conjecture predicts that a closed geometrically irreducible subvariety  in a Shimura variety is a Shimura subvariety if and only if
it contains a Zariski dense subset of CM points. It is thus immediate that the formulation of Oort is equivalent to
the one by Coleman modulo Andr\'e-Oort. The readers are referred to \cite{noot06,scanlon bourbaki}, etc. for surveys on  recent progress towards the Andr\'e-Oort conjecture.

Although Coleman made his conjecture for $g\geq 4$, counterexamples { have been} found for $4\leq g\leq 7$ (cf. \cite{djongnoot89,frghpe14,mohajerzuo,moonen10}).
However, if one aims at the non-existence of Shimura subvarieties of a certain ``type'' specified by some algebraic constructions with $g$ sufficently large,
then much more evidence is known. For instance, Dwork-Ogus \cite{do86},
de Jong-Noot \cite{djongnoot89} and Moonen-Oort \cite{moonen10,mo11} studied Shimura subvarieties arising from cyclic covers of $\bbp^1$ using $p$-adic Hodge theory.
Kukulies \cite{kukulies10} studied rational Shimura curves with strictly maximal Higgs field using Viehweg-Zuo's characterization and the Sato-Tate conjecture for modular curves.
In \cite{luzuo14} the second and the third named authors studied Shimura curves of Mumford type using the Arakelov inequality of Higgs bundles.

In \cite{hain99} Hain studied the Oort conjecture using properties of mapping class groups to rule out locally symmetric subvarieties in $\Acal_g$ of high ranks.
In fact, he proves that if a Shimura subvariety $M$ of $\Acal_g$ admits no locally symmetric divisors and is contained generically in the Torelli locus,
then it is
\begin{enumerate}
\item either a ball quaotient, i.e., uniformized by the complex $n$-ball $\mathbb{B}^n$ which is the Hermitian symmetric domain associated to $\SU(n,1)$;

\item or $g\geq 3$ and each component of $M^\mathrm{dec}$, the locus corresponding to Jacobians of singular curves, is of codimension at least 2 in $M$,
      the intersection $(M-M^{\mathrm{dec}})\cap\Tcal\Hcal_g$ is non-empty of codimension 1, and the family of Jacobians does not lift to a family of curves over $M$.
\end{enumerate}
Inspired by Hain's results, de Jong and Zhang \cite{djongzh07} proved the non-existence of Hilbert modular varieties in the Torelli locus $\Tcal_g$ for $g>4$.

In the works of Kudla, etc. cf. \cite{kudla orthogonal} and \cite{kudla rapoport}, one encounters Shimura subvarieties of $\SU(n,1)$-type and of $\SO(n,2)$-type,
cf. Definitions \ref{Shimura data of SU(n,1)-type} and \ref{Shimura data of SO(n,2)-type},
which contain Shimura subvarieties in each codimension. Hain's treatment does not apply to these Shimura subvarieties.
But they contain Shimura curves, and they become the main object of our study based on slope properties of Higgs bundles on curves.

\subsection{The main results and the idea of the proofs}
The decomposition of canonical Higgs bundles on Shimura subvarieties, especially on Shimura curves, plays an essential role in this paper, cf. Section \ref{subsection Higgs bundles on Shimura varieties}.
Roughly speaking, for $C\subset\Acal_g$ a smooth closed curve, the canonical Higgs bundle $\Ecal_C$ on $C$ is the Hodge bundle on $C$ given by the universal family of abelian varieties restricted  over $C$.
From the Simpson correspondence it follows that this Higgs bundle is completely determined by the complex representation of the fundamental group $\pi_1(C)$ on the De Rham cohomology of the abelian variety.
The canonical Higgs bundle thus decomposes into $\Ecal_C=\Fcal_C\oplus\Ucal_C$, with $\Ucal_C$ the maximal unitary Higgs subbundle corresponding to the maximal  subrepresentation
on which $\pi_1(C)$ acts through a compact unitary group.
These Higgs bundles are induced by the universal family of abelian varieties,
and thus they are of the form $\Lambda_C=\Lambda_C^{-1,0}\oplus \Lambda_C^{0,-1}$ with $\Lambda\in\{\Ecal,\Fcal,\Ucal\}$ following Hodge decomposition of abelian varieties
(here we follow \cite{deligne pspm} and \cite{milne05} for the convention on Hodge types, where our $H^{p,q}$ is recognized as $H^{-p,-q}$ in complex geometry).
This decomposition extends to the logarithmic Higgs bundle on a smooth compactification $\Cbar$ of $C$ by joining a divisor $\partial C=\Cbar - C$,
i.e., $\Ecal_\Cbar=\Fcal_\Cbar\oplus\Ucal_\Cbar$ with its $(\minuszero)$-part being $\Ecal_\Cbar^\minuszero=\Fcal_\Cbar^\minuszero\oplus\Ucal_\Cbar^\minuszero$.
Viehweg and the last named author \cite{vz04} have obtained a numerical control on the slope of $\Fcal_C^\minuszero$:

\begin{theorem}[(Viehweg-Zuo)]\label{Arakelov (in)equality}
Let $C\subset\Acal_g$ be a smooth closed curve, such that in some smooth compactification $\Acalbar_g$ of $\Acal_g$,
the closure $\Cbar$ of $C$ is a smooth projective curve. Then in the decomposition of the logarithmic Higgs bundle
\begin{equation}\label{eqndecompositionE_C}
\Ecal_\Cbar=\Fcal_\Cbar\oplus\Ucal_\Cbar
\end{equation}
with $\Fcal_\Cbar=\Fcal_\Cbar^\minuszero\oplus \Fcal_\Cbar^\zerominus$
and $\Ucal_\Cbar$ the maximal unitary Higgs subbundle, we have the following Arakelov inequality 
\begin{equation}\label{eqnarakelovineqE_C}
\deg(\Fcal_\Cbar^\minuszero)\leq \frac{\rank\Fcal_\Cbar^{-1,0}}{2} \cdot \deg\Omega^1_\Cbar(\log\partial C).
\end{equation}
Moreover, if $C$ is a Shimura curve, then the equality holds in \eqref{eqnarakelovineqE_C} and $\Fcal_\Cbar^\minuszero$
is poly-stable, i.e., it is a direct sum of stable bundles with the same slope.
\end{theorem}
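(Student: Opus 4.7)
The plan is to leverage Simpson's non-abelian Hodge correspondence to decompose $\Ecal_\Cbar$ into unitary and non-unitary pieces, and then exploit the symmetry of the Higgs field coming from the polarization of the weight-one VHS.

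First I would set up the logarithmic Higgs bundle: the restriction to $C$ of the universal family on $\Acal_g$ defines a polarized $\Zbb$-VHS of weight one with quasi-unipotent local monodromies at $\partial C=\Cbar-C$, and its Deligne canonical extension together with the associated graded of the Hodge filtration recovers $(\Ecal_\Cbar,\theta)$. By Simpson's theorem, in the logarithmic/parabolic version due to Mochizuki, this Higgs bundle is poly-stable of $\mu$-slope zero. I collect into $\Ucal_\Cbar$ the direct sum of those stable summands on which the Higgs field vanishes (equivalently those corresponding to unitary sub-local-systems under Simpson's correspondence) and let $\Fcal_\Cbar$ be the complement, giving \eqref{eqndecompositionE_C}. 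Polarization identifies $\Fcal_\Cbar^\zerominus \cong (\Fcal_\Cbar^\minuszero)^\vee$, so the two have equal rank and opposite degrees, and renders the restricted Higgs field
\begin{equation*}
\theta\colon \Fcal_\Cbar^\minuszero \lra (\Fcal_\Cbar^\minuszero)^\vee\otimes \Omega^1_\Cbar(\log\partial C)
\end{equation*}
a symmetric $\Omega^1_\Cbar(\log\partial C)$-valued bilinear form $b$ on $\Fcal_\Cbar^\minuszero$.

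Next, for the Arakelov inequality I would analyze the radical $\mathcal{K}=\ker b$, a saturated sub-bundle on which $\theta$ vanishes. Then $(\mathcal{K},0)$ is a sub-Higgs bundle of $\Fcal_\Cbar$, and semistability at slope zero forces $\deg\mathcal{K}\leq 0$. The quotient $\mathcal{Q}=\Fcal_\Cbar^\minuszero/\mathcal{K}$ inherits a generically non-degenerate symmetric form, giving an injection $\mathcal{Q}\hookrightarrow \mathcal{Q}^\vee\otimes \Omega^1_\Cbar(\log\partial C)$ of torsion-free sheaves of equal rank; comparing degrees yields $2\deg\mathcal{Q}\leq \rank\mathcal{Q}\cdot\deg\Omega^1_\Cbar(\log\partial C)$. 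Combining this with $\deg\Fcal_\Cbar^\minuszero=\deg\mathcal{K}+\deg\mathcal{Q}\leq \deg\mathcal{Q}$, with $\rank\mathcal{Q}\leq \rank\Fcal_\Cbar^\minuszero$, and with the positivity of $\deg\Omega^1_\Cbar(\log\partial C)$ in the only range where \eqref{eqnarakelovineqE_C} is non-trivial, I obtain \eqref{eqnarakelovineqE_C}.

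For the equality statement, suppose $C$ is a Shimura curve. Then the generic Mumford--Tate group $\Hbf\subset\GSp_{2g}$ is reductive, and the VHS on $C$ decomposes into irreducible $\Hbf(\Rbb)$-isotypic components. On each non-unitary component, the Harish-Chandra decomposition of the Lie algebra of $\Hbf$ identifies the horizontal direction with its $\mathfrak{g}^{-1,1}$-piece, and $\Hbf$-equivariance of the Higgs field forces its $(-1,0)\to(0,-1)\otimes \Omega^1_\Cbar(\log\partial C)$ component to be everywhere non-degenerate; thus $\mathcal{K}=0$ on each such component, the induced form is already non-degenerate, and the inequality is saturated. Since all stable summands of $\Fcal_\Cbar^\minuszero$ then share the common slope $\tfrac12\deg\Omega^1_\Cbar(\log\partial C)$, the bundle $\Fcal_\Cbar^\minuszero$ is poly-stable. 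The main technical hurdle lies in the preparation: invoking the polystable Simpson--Mochizuki decomposition in the logarithmic/parabolic setting so that $\Ecal_\Cbar=\Fcal_\Cbar\oplus\Ucal_\Cbar$ respects the Hodge grading, and verifying the symmetry of $\theta$ via the weight-one polarization (reflecting that the Kodaira--Spencer class of an abelian scheme lies in $\mathrm{Sym}^2$ by compatibility with the polarization); the equality case further rests on Deligne's description of a Shimura datum identifying the horizontal tangent of the uniformizing Hermitian symmetric domain with the $\mathfrak{g}^{-1,1}$-part of the Mumford--Tate Lie algebra.
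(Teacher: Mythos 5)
The paper does not itself prove this theorem; it is quoted from Viehweg--Zuo \cite{vz04}. Your argument reproduces what is, in essence, the same route taken there: one passes to the (parabolic/logarithmic) Simpson correspondence to get a polystable slope-zero logarithmic Higgs bundle, uses the weight-one polarization to view the Higgs field on the non-unitary part as a symmetric $\Omega^1_{\Cbar}(\log\partial C)$-valued form $b$ on $\Fcal_\Cbar^{-1,0}$, and then splits the estimate along the radical of $b$. Viehweg--Zuo phrase this as $\Fcal^{-1,0}=A^{-1,0}\oplus B^{-1,0}$ with $A^{-1,0}=\ker\theta|_{\Fcal^{-1,0}}$ and a chosen complement, whereas you work with the quotient $\mathcal{Q}=\Fcal_\Cbar^{-1,0}/\mathcal{K}$; these are equivalent for the degree count. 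The semistability bound $\deg\mathcal{K}\le 0$ and the determinantal bound $2\deg\mathcal{Q}\le(\rank\mathcal{Q})\deg\Omega^1_{\Cbar}(\log\partial C)$ are both correct, and the low-genus cases where $\deg\Omega^1_{\Cbar}(\log\partial C)\le0$ are indeed degenerate (trivial or abelian monodromy, so $\Fcal_\Cbar=0$). The identification of the Higgs field with the Kodaira--Spencer action of $\mathfrak{g}^{-1,1}$, and the fact that for a Shimura curve each non-unitary irreducible piece restricts along the non-compact $\SL_2$-factor to copies of the standard representation (so $\theta$ is an isomorphism $\Fcal_\Cbar^{-1,0}\xrightarrow{\sim}\Fcal_\Cbar^{0,-1}\otimes\Omega^1_{\Cbar}(\log\partial C)$), is the right mechanism for the equality.

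One step of the equality case is stated in a circular way and should be made explicit. You conclude poly-stability of $\Fcal_\Cbar^{-1,0}$ from the claim that ``all stable summands of $\Fcal_\Cbar^{-1,0}$ share the common slope $\tfrac12\deg\Omega^1_{\Cbar}(\log\partial C)$'', but the existence of a decomposition into stable summands is precisely what poly-stability asserts. What is actually needed is the following short lemma, and you should include it: if $(\mathcal{V}=\mathcal{V}^{-1,0}\oplus\mathcal{V}^{0,-1},\theta)$ is a stable Higgs bundle of weight one and slope zero with $\theta\colon\mathcal{V}^{-1,0}\to\mathcal{V}^{0,-1}\otimes L$ an isomorphism (here $L=\Omega^1_{\Cbar}(\log\partial C)$), then the bundle $\mathcal{V}^{-1,0}$ is stable of slope $\tfrac12\deg L$. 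Indeed, for any proper subbundle $\mathcal{W}\subset\mathcal{V}^{-1,0}$ the sheaf $\mathcal{W}\oplus\bigl(\theta(\mathcal{W})\otimes L^{-1}\bigr)$ is a proper sub-Higgs bundle of $\mathcal{V}$, so stability of $(\mathcal{V},\theta)$ gives $2\deg\mathcal{W}-(\rank\mathcal{W})\deg L<0$, i.e. $\mu(\mathcal{W})<\tfrac12\deg L=\mu(\mathcal{V}^{-1,0})$. Applying this to the stable Higgs summands of $\Fcal_\Cbar$ (whose Higgs fields are isomorphisms by your Lie-theoretic observation) completes the poly-stability of $\Fcal_\Cbar^{-1,0}$.
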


On the other hand, if a  Shimura curve $C$ is contained generically in the Torelli locus $\Tcal_g$, then we can construct a commutative diagram
$$\xymatrix{B\ar[r]^{j_B}\ar[d] & C\ar[d]^\cap\\ \Mcal_g\ar[r]^{j^\circ} & \Acal_g}$$
where $B$ is the normalization of the pull-back $(j^\circ)^\inv C$ in $\Mcal_g$.
The morphism $B\ra \Mcal_g$ gives rise to a relative $B$-curve $f:S\ra B$, which is a surface fibred over $B$ by curves of genus $g$.
It admits a compactification into a fibration $\fbar:\Sbar\ra\Bbar$ of semi-stable curves over the smooth compactification $\Bbar$ of $B$,
and the morphism $j_B$ extends naturally to $j_{\ol B}: \Bbar \to \Cbar$.
On $\Bbar$ we have the  sheaf $\fbar_*\omega_{\Sbar/\Bbar}$ where $\omega_{\Sbar/\Bbar}=\omega_{\Sbar}\otimes\fbar^*\omega_\Bbar^{\vee}$ is the relative canonical sheaf.
It is a locally free sheaf on $\Bbar$ of rank $g$, and admits a decomposition
$\fbar_*\omega_{\Sbar/\Bbar}=\Fcal^{-1,0}_\Bbar\oplus\Ucal_\Bbar^{-1,0}$ with $\Fcal_\Bbar^{-1,0}=j_\Bbar^*\big(\Fcal_\Cbar^{-1,0}\big)$ semi-stable and
$\Ucal_\Bbar^{-1,0}=j_\Bbar^*\big(\Ucal_\Cbar^{-1,0}\big)$ unitary (cf. \cite[\S\,3]{luzuo14}).
Using Xiao's technique on the slope inequality of fibred surfaces,
we derive an upper bound for $\rank\Ucal_\Bbar^\minuszero$, hence the following theorem
\begin{theorem}[(Exclusion of Shimura curves)]\label{exclusion of Shimura curves}
Let $C\subset \Acal_g$ be a curve with the Higgs bundle decomposition $\Ecal_C=\Fcal_C\oplus\Ucal_C$, where $\Ucal_C$ is the maximal unitary Higgs subbundle.
If
\begin{equation}\label{eqnassumption in exclusion of curves}
\rank\Ucal_C^{-1,0}> (5g+1)/6, \quad\text{or equivalently}\quad \rank\Fcal_C^\minuszero<(g-1)/6,
\end{equation}
then $C$ is not contained generically in the Torelli locus. If moreover $C$ is a Shimura curve, then it suffices to require $\rank\Ucal_C^\minuszero\geq(4g+2)/5$, or equivalently, $\rank\Fcal_C^{\minuszero}\leq(g-2)/5$.
\end{theorem}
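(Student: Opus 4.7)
The plan is to argue by contradiction. Suppose $C$ is contained generically in $\calt_g$, and construct as in the excerpt the associated semi-stable fibration $\fbar : \Sbar \ra \Bbar$ of curves of genus $g$, with decomposition $\fbar_* \omega_{\Sbar/\Bbar} = \Fcal^{-1,0}_{\Bbar} \oplus \Ucal^{-1,0}_{\Bbar}$, where $\Fcal^{-1,0}_{\Bbar} = j_{\Bbar}^* \Fcal^{-1,0}_{\Cbar}$ is semi-stable (indeed poly-stable with maximal slope when $C$ is a Shimura curve, by Theorem \ref{Arakelov (in)equality}) and $\Ucal^{-1,0}_{\Bbar} = j_{\Bbar}^* \Ucal^{-1,0}_{\Cbar}$ is unitary. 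Since unitary bundles have degree zero, $\deg \fbar_* \omega_{\Sbar/\Bbar} = \deg \Fcal^{-1,0}_{\Bbar}$, and Theorem \ref{Arakelov (in)equality}, applied to the logarithmic Higgs bundle on $\Bbar$, further bounds this above by $\tfrac{1}{2}(\rank \Fcal^{-1,0}_{\Bbar}) \cdot \deg \Omega^1_{\Bbar}(\log \partial B)$.

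The central input is Xiao's slope inequality for fibered surfaces, applied to $\fbar$ with careful attention to the Harder--Narasimhan filtration of $\fbar_* \omega_{\Sbar/\Bbar}$. This HN filtration has only two distinct slopes, $\mu_F := \mu(\Fcal^{-1,0}_{\Bbar}) > 0$ and $0$, with rank jumps at $r_F := \rank \Fcal^{-1,0}_{\Bbar}$ and $g$. Xiao's technique lower-bounds $\omega_{\Sbar/\Bbar}^2$ by a weighted sum over the filtration of the form $\sum_i (\mu_i + \mu_{i+1})(r_i - r_{i-1})$ together with corrections from the moving parts of the associated linear systems; crucially, when the rank-$(g - r_F)$ unitary summand sits at the bottom of the filtration, the entire positive degree concentrates in the small rank-$r_F$ piece, forcing the slope $\lambda(\fbar) = \omega_{\Sbar/\Bbar}^2 / \deg \fbar_* \omega_{\Sbar/\Bbar}$ to grow like $O(g/r_F)$ as $r_F$ shrinks. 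Comparing with the Noether upper bound $\lambda(\fbar) \leq 12$ — which follows from the identity $\omega_{\Sbar/\Bbar}^2 = 12 \deg \fbar_* \omega_{\Sbar/\Bbar} - \delta_{\fbar}$ for semi-stable fibrations, with $\delta_{\fbar} \geq 0$ counting nodes of singular fibers — yields an upper bound for $g - r_F = \rank \Ucal^{-1,0}_C$ purely in terms of $g$. Optimizing this comparison in the two-step HN configuration is expected to produce precisely the threshold $(5g+1)/6$ appearing in \eqref{eqnassumption in exclusion of curves}.

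The sharpened threshold $(4g+2)/5$ for Shimura curves is obtained by invoking the equality case of Theorem \ref{Arakelov (in)equality}: for a Shimura $C$, the slope $\mu_F$ is pinned to $\tfrac{1}{2}\deg \Omega^1_{\Cbar}(\log \partial C)$ and $\Fcal^{-1,0}_{\Cbar}$ is poly-stable, so after pullback $\Fcal^{-1,0}_{\Bbar}$ remains semi-stable with saturated slope (up to a Riemann--Hurwitz adjustment through $j_{\Bbar}$), making Xiao's lower bound saturate along the top HN piece and yielding a tighter numerical comparison. The main technical obstacle is extracting the precise and sharp form of Xiao's slope inequality adapted to a two-slope HN filtration in which the lower slope vanishes, and simultaneously accounting for the ramification of $j_{\Bbar} : \Bbar \ra \Cbar$ so that the degree estimates on $\Omega^1_{\Bbar}(\log \partial B)$ are not lost under pullback; these quantitative details are what fix the exact numerical constants $(5g+1)/6$ and $(4g+2)/5$ in the statement.
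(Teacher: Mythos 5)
Your overall architecture matches the paper's: construct the semi-stable fibration $\fbar:\Sbar\ra\Bbar$ with the decomposition $\fbar_*\omega_{\Sbar/\Bbar}=\Fcal_\Bbar^{-1,0}\oplus\Ucal_\Bbar^{-1,0}$, apply Xiao's slope technique via the Harder--Narasimhan filtration, and compare against the Noether identity $\omega_{\Sbar/\Bbar}^2=12\deg\fbar_*\omega_{\Sbar/\Bbar}-\delta_\fbar$. The first bound $(5g+1)/6$ is indeed extracted essentially as you describe: with $\cale_1$ the first HN step, $\omega_{\Sbar/\Bbar}^2\geq(2g-2)\mu(\cale_1)$ together with $\mu(\cale_1)\geq\mu(\Fcal_\Bbar^{-1,0})=\deg\fbar_*\omega_{\Sbar/\Bbar}/(g-\rank\Ucal_\Bbar^{-1,0})$ closes the loop. (Note the paper does \emph{not} invoke the Arakelov inequality here; that step in your sketch is superfluous, since the only input needed is the ratio $\mu(\cale_1)\geq\deg\fbar_*\omega_{\Sbar/\Bbar}/\rank\Fcal_\Bbar^{-1,0}$.)

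Where your proposal goes wrong is in the mechanism producing the sharper threshold $(4g+2)/5$. You attribute the improvement to the \emph{equality case} of the Arakelov inequality and to the slope $\mu_F$ being ``pinned''; but the precise value of $\mu_F$ plays no role anywhere in the argument, and the Arakelov equality is never used. What the polystability of $\Fcal_\Cbar^{-1,0}$ buys is only this: its pullback $\Fcal_\Bbar^{-1,0}=j_\Bbar^*\Fcal_\Cbar^{-1,0}$ is semi-stable, forcing the Harder--Narasimhan filtration of $\fbar_*\omega_{\Sbar/\Bbar}$ to have exactly two steps with $\cale_1=\Fcal_\Bbar^{-1,0}$ of rank $r_F=g-\rank\Ucal_\Bbar^{-1,0}$. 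The actual source of the numerical improvement is \textbf{Clifford's theorem} applied to the moving part: with $d_1=N(\cale_1)\cdot F$ the degree of the moving linear system on a general fibre $F$, and the general fibre being non-hyperelliptic, Clifford gives $d_1>2(r_F-1)$ (unless $d_1=r_F-1=0$). Plugging this into the refined form of Xiao's estimate $\omega_{\Sbar/\Bbar}^2\geq(2g-2+d_1)\mu+\omega_{\Sbar/\Bbar}\cdot Z>(4g-4-2\rank\Ucal_\Bbar^{-1,0})\mu$ and combining with $12\deg\fbar_*\omega_{\Sbar/\Bbar}\geq\omega_{\Sbar/\Bbar}^2$ produces the bound $\rank\Ucal_\Bbar^{-1,0}<(4g+2)/5$. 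Without Clifford — using only $d_1\geq r_F-1$ from the dimension of the linear system — one gets a strictly weaker constant. Finally, you omit entirely the hyperelliptic case, which must be handled separately (the paper reduces it to Xiao's and Lu--Zuo's bounds on the relative irregularity of hyperelliptic fibrations, giving $\rank\Ucal_\Bbar^{-1,0}\leq(g+1)/2<(4g+2)/5$ outright). Your mention of Riemann--Hurwitz adjustments through $j_\Bbar$ is also unnecessary: semi-stability is preserved under finite pullback and the slope ratio that enters the estimate is intrinsic to $\Bbar$.
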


\begin{remark}
According to the above result together with \cite[Theorem A]{luzuo14}, we shall say that the Oort conjecture for Shimura curves of the extremal cases have been solved:
the case $\Ecal_\Cbar=\Fcal_\Cbar$, i.e., $\Ucal_\Cbar=0$, also called curves with strictly maximal Higgs field, has been dealt with in \cite{luzuo14};
and the opposite case, namely the canonical Higgs bundle having a large unitary Higgs subbundle, is treated in the above theorem.
However the techniques involved are totally different.
For a Shimura curve $C$ contained generically in the Torelli locus,
in \cite{luzuo14} we excluded $C$ by establishing a strict Arakelov inequality contradiction to the equality in \eqref{eqnarakelovineqE_C}
under the assumption $\Ecal_\Cbar=\Fcal_\Cbar$ using Miyaoka-Yau's theorem and Moriwaki's slope inequality on fibred surfaces;
here we obtain a bound on the rank of the unitary Higgs subbundle
by using the poly-stability of $\Fcal_\Cbar$ and  Xiao's technique of the slope inequality for fibred surfaces.
\end{remark}

Our main theorems focus on Shimura subvarieties in $\Acal_g$ that
contain Shimura curves. If $M$ is such a Shimura variety that
contains a Zariski dense subset of Shimura curves satisfying the
condition in Theorem \ref{exclusion of Shimura curves}, then $M$ is
not contained generically in $\Tcal_g$. The Shimura subvarieties we
work with are either of $\SU(n,1)$-type or of $\SO(n,2)$-type,
namely they are defined by Shimura subdata
$(\Gbf,X)\subset(\GSp_{2g},\Hscr_g^\pm)$ of the following forms:
\begin{list}{}
{\setlength{\labelwidth}{1.8cm}
\setlength{\leftmargin}{1.9cm}}
\item[$\SU(n,1)$:] $\Gbf^\der=\Res_{F/\Qbb}\SU(H)$, where $h:H\times H\ra E$ is a non-degenerate Hermitian space over a CM field $E$ with totally real subfield $F$, such that  $h$ is  of signature $(n,1)$ along one single embedding $\tau:F\mono\Rbb$, and is definite along the other embeddings;
\item[$\SO(n,2)$:] $\Gbf^\der=\Res_{F/\Qbb}\Spin(H)$, where $Q:H\times H\ra F$ is a non-degenerate quadratic space over a totally real field $F$, such that $Q$ is of signature $(n,2)$ along one single embedding $\tau:F\mono\Rbb$, and definite along the other embeddings.
\end{list}
Following \cite{kudla orthogonal} and \cite{kudla rapoport}, we know
that in each case the Shimura subvariety $M$  defined by $(\Gbf,X)$
contains Shimura curves, in fact Shimura subvarieties in each
codimension.

The homomorphisms $\Gbf^\der\mono\Sp_{2g}$ are rational symplectic representations in the sense of \cite{satake classification},
and they characterize many geometric properties of the embeddings $X\mono\Hscr_g^\pm$.
On the other hand, the canonical Higgs bundles $\Ecal_M$ on the Shimura subvariety $M$ is determined by the representation of $\pi_1(M)$,
which is determined by $\Gbf^\der\ra\Sp_{2g}$ because $\pi_1(M)$ is essentially a congruence subgroup in $\Gbf^\der(\Qbb)^+$.
We can thus compute the decomposition of $\Ecal_M$ using the symplectic representations, and our main results are:

\begin{theorem}[(Unitary case)]\label{Main theorem}
Let $M\mono \Acal_g$ be a Shimura subvariety of $\SU(n,1)$-type ($n\geq 1$), with $\Gbf^\der=\Res_{F/\Qbb}\SU_V$ for some Hermitian space $H$
over a CM extension $E/F$ subject to the constraints of signatures as above.
Assume that the representation $\Gbf^\der\ra\Sp_{2g}$ is the scalar restriction from $F$ to $\Qbb$ of some $\tau$-primary symplectic representation
$\SU(H)\ra\Sp_L$ of type $\Lambda_m$: $L\otimes_{F,\tau}\Rbb\isom\Lambda_m^{\oplus N}$ ($1\leq m\leq n$).
If
\begin{equation}\label{eqnassumption in main theorem}
\left(1- \frac{10m(n-m+1)}{dn(n+1)}\right)\cdot g\geq 2,\qquad \text{where~}d=[F:\Qbb],
\end{equation}
then $M$ is not contained generically in the Torelli locus $\Tcal_g$.
\end{theorem}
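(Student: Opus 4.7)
The plan is to deduce Theorem \ref{Main theorem} from Theorem \ref{exclusion of Shimura curves} by exhibiting a Zariski-dense family of Shimura curves $C\subset M$ satisfying $\rank\Fcal_C^{\minuszero}\leq(g-2)/5$. If $M$ were generically contained in $\Tcal_g$, then some such $C$ would meet the open Torelli locus $\Tcal_g^\circ$ and hence be itself generically contained in $\Tcal_g$, contradicting the exclusion criterion above.

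The requisite Shimura curves are constructed from non-degenerate $E$-subspaces $H'\subset H$ of $E$-dimension $2$ whose induced Hermitian form has signature $(1,1)$ at $\tau$ and remains definite at every $\tau'\neq\tau$. Each such $H'$ defines a sub-Shimura datum $\SU(H')\hookrightarrow \SU(H)$ of $\mathrm{SU}(1,1)$-type whose associated Shimura subvariety $C\subset M$ is a Shimura curve, and by the constructions of Kudla (\cite{kudla orthogonal,kudla rapoport}) the collection of these curves is Zariski dense in $M$. To compute $\Fcal_C^{\minuszero}$ I would decompose $V=V'\oplus V''$ as $\SU(H')$-modules (with $V'=H'$ and $V''$ the $h$-orthogonal complement, of $E$-dimension $n-1$ and positive definite at $\tau$), and restrict the fundamental representation $\Lambda^m V$ of $\SU(H)$:
\begin{equation*}
\Lambda^m V=\Lambda^m V''\ \oplus\ \bigl(V'\otimes\Lambda^{m-1}V''\bigr)\ \oplus\ \bigl(\det V'\otimes \Lambda^{m-2}V''\bigr).
\end{equation*}
The first and third summands are $\SU(H')$-trivial, hence unitary as $\pi_1(C)$-modules; the middle summand contributes one copy of the standard $\mathrm{SU}(1,1)$-representation per basis vector of $\Lambda^{m-1}V''$, and this is the entire non-unitary contribution at the place $\tau$. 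At any embedding $\tau'\neq\tau$ the factor group $\SU(H')_{F,\tau'}=\mathrm{SU}(2)$ is compact, so those contributions lie automatically in $\Ucal_C$. Tracking the Hodge grading $V^{\minuszero}\oplus V^{\zerominus}$ (of $\Cbb$-dimensions $n$ and $1$ at $\tau$), combining with the complex conjugate representation to recover the underlying real symplectic representation, and multiplying by the $\tau$-primary multiplicity $N$, I would obtain
\begin{equation*}
\rank\Fcal_C^{\minuszero}=2N\binom{n-1}{m-1}\qquad\text{and}\qquad g=dN\binom{n+1}{m}.
\end{equation*}

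The elementary identity $\binom{n-1}{m-1}/\binom{n+1}{m}=m(n-m+1)/(n(n+1))$ then yields $\rank\Fcal_C^{\minuszero}/g=2m(n-m+1)/(dn(n+1))$, and the hypothesis \eqref{eqnassumption in main theorem} rearranges precisely to $\rank\Fcal_C^{\minuszero}\leq(g-2)/5$; Theorem \ref{exclusion of Shimura curves} then completes the proof. The main obstacle will be the Hodge-theoretic bookkeeping in the second step: one must correctly identify the rational symplectic representation obtained by scalar restriction of a $\tau$-primary $\Lambda_m$-type representation (invoking Satake's classification of symplectic representations for $\SU$-type Shimura data), normalise it to weight $1$ by an appropriate Tate twist, and verify that contributions from embeddings $\tau'\neq\tau$ indeed factor through compact factors and therefore fall entirely within $\Ucal_C$. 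The case $m=1$ (standard representation, $\rank\Fcal_C^{\minuszero}=2$) provides a convenient sanity check against the general formula.
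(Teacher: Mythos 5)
Your proposal is correct and mirrors the paper's own argument: cut out Shimura curves of $\SU(1,1)$-type via orthogonal decompositions $H=H'\oplus H''$ (called $W\oplus U$ in the paper), decompose $\wedge^m H$ under $\SU(H')\times\SU(H'')$ exactly as in Lemma \ref{the real irreducible case}, isolate the non-unitary contribution $H'\otimes\wedge^{m-1}H''$ at $\tau$, and then combine the numerology with Theorem \ref{exclusion of Shimura curves} and a density argument. The paper phrases the final density step as a Hecke-translation argument (Lemma \ref{reduction to the open Torelli locus}) rather than an appeal to Kudla's cycle constructions, but this is a presentational difference rather than a substantive one; your formula $\rank\Fcal_C^{\minuszero}=2N\binom{n-1}{m-1}$ with $g=dN\binom{n+1}{m}$ agrees with the paper's final count (the paper has an internal factor-of-two slip in its intermediate statement ``$\dim_F L = N\binom{n+1}{m}$'', which should be $2N\binom{n+1}{m}$, but this cancels and the end result coincides with yours).
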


Here the notion of $\tau$-primary representation of type $\Lambda_m$ (cf. Definition \ref{primary type}) arises naturally from the classification of Satake \cite{satake summary}
(which summarizes \cite{satake embedding} and \cite{satake classification}),
and we do not need finer information over $F$ or over $\Qbb$. The key point is to restrict the symplectic representation $\Gbf^\der\ra\Sp_{2g}$ to $\Gbf'^\der$ where $(\Gbf',X')$ is the Shimura subdatum defining some Shimura curve $C$ in $M$,
and compare the rank of unitary part $\Ucal_C$ of $\Ecal_C$ with the inequality \eqref{eqnassumption in exclusion of curves} in Theorem \ref{exclusion of Shimura curves}.
In fact we do obtain a more general inequality for the rank   $\Ucal_C$ for the restriction of a general symplectic representation $\Gbf^\der\ra\Sp_{2g}$,
cf. Corollaries \ref{formula of SU(n,1)-type} and \ref{formula of SU(1,1)-type},
which is an immediate consequence of the inequality above with various $\Lambda_m$ taken into consideration.

Note that under the assumption of the theorem we actually have $g=Nd\binom{n+1}{m}$ greater than $d\binom{n+1}{m}$, hence the inequality admits some relaxed forms which are more convenient:

\begin{corollary}[(Unitary case)]\label{Main corollary}
Under the assumptions and notations of Theorem \ref{Main theorem}, the Shimura subvariety of $\SU(n,1)$-type is not contained generically in the Torelli locus $\calt_g$ if:
\begin{itemize}
\item $d(n+1)\geq 12$ when the symplectic representation is $\tau$-primary of type $\Lambda_1$;

\item $d\geq 3$ and $n\geq 6$ or $d\geq 4$ with $n$ arbitrary when the symplectic representation is $\tau$-primary of type $\Lambda_m$ for general $m\geq 2$.
\end{itemize}
\end{corollary}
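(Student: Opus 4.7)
The plan is to deduce the corollary as a direct numerical consequence of Theorem \ref{Main theorem}. Since the remark preceding the corollary records that $g = Nd\binom{n+1}{m}$ for some integer $N \geq 1$, it suffices to verify
\begin{equation*}
\left(1 - \frac{10\,m(n-m+1)}{d\,n(n+1)}\right) d\binom{n+1}{m} \;\geq\; 2,
\end{equation*}
and the conclusion then follows. The elementary binomial identity $\frac{m(n-m+1)}{n(n+1)}\binom{n+1}{m} = \binom{n-1}{m-1}$ recasts this in the cleaner equivalent form
\begin{equation*}
d\binom{n+1}{m} - 10\binom{n-1}{m-1} \;\geq\; 2,
\end{equation*}
which I shall denote $(\star)$.

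For the first bullet ($m=1$, i.e.\ $\tau$-primary type $\Lambda_1$), $(\star)$ reduces immediately to $d(n+1) - 10 \geq 2$, which is exactly the hypothesis $d(n+1) \geq 12$.

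For the second bullet ($m \geq 2$) I would verify $(\star)$ by a short case analysis. First, $(\star)$ is symmetric under $m \leftrightarrow n+1-m$ (both $\binom{n+1}{m}$ and $\binom{n-1}{m-1}$ share this symmetry), so it suffices to treat $2 \leq m \leq \lceil(n+1)/2\rceil$, together with the boundary value $m = n$, which reduces to the $m = 1$ case and is automatic from $d \geq 3$, $n \geq 6$ or $d \geq 4$, $n \geq 2$. At $m = 2$, $(\star)$ simplifies to the elementary quadratic $\tfrac{d}{2}\,n(n+1) - 10(n-1) \geq 2$ in $n$: when $d \geq 4$ this becomes $(n-2)^2 \geq 0$, holding for every $n \geq 2$, while for $d = 3$ it holds for every $n \geq 5$. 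For the intermediate values $3 \leq m \leq \lceil(n+1)/2\rceil$, I would set $f(m) := d\binom{n+1}{m} - 10\binom{n-1}{m-1}$ and observe that $f(m+1) - f(m)$ has, for $m < n/2$, the sign of $dn(n+1) - 10(n+1-m)(m+1)$; this pins down the binding value of $m$ for each $(d,n)$ regime. For $d \geq 4$ the minimum of $f$ on $\{2,\dots,n\}$ is attained at an extreme, so $(\star)$ follows from the $m=2$ or $m=n$ bound already proved; for $d = 3$ the binding value sits near the middle, and it suffices to verify $(\star)$ at $n = 6$ (for instance $f(3) = 3\binom{7}{3} - 10\binom{5}{2} = 5 \geq 2$) and then observe that the value of $(\star)$ at the central $m$ is non-decreasing in $n$ once $n \geq 6$.

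The argument contains no conceptual obstacle; the only mildly delicate step is to locate the binding value of $m$ in the $d = 3$ regime, which is resolved by the symmetry of $(\star)$ together with a routine unimodality check on the binomial sequences involved.
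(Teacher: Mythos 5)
Your proposal is correct in conclusion but takes a genuinely different route from the paper's. The paper first proves an intermediate corollary giving explicit per-$m$ lower bounds on $d$ for $m = 1, 2, 3$, and $\geq 4$ (with the $m \geq 4$ case handled by hand-checking $n \leq 8$ and a crude estimate $\binom{n+1}{m} \geq n+1$, $m(n+1-m) \leq (n+1)^2/4$ for $n \geq 9$), and then reads off the statement. You instead recast the inequality of Theorem \ref{Main theorem} using the identity $\tfrac{m(n-m+1)}{n(n+1)}\binom{n+1}{m} = \binom{n-1}{m-1}$ into the cleaner form $(\star)\colon d\binom{n+1}{m} - 10\binom{n-1}{m-1} \geq 2$, exploit its symmetry under $m \leftrightarrow n+1-m$, and verify $(\star)$ at $m = 1, 2, n$ and at the central $m$ via a discrete derivative of $f(m) := d\binom{n+1}{m} - 10\binom{n-1}{m-1}$. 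This recast is slicker than the paper's direct manipulation, and the computed sign of $f(m+1)-f(m)$ is correct. However, two points in your middle-$m$ discussion are loose: for $d \geq 4$ you assert without verification that the minimum of $f$ on $\{2,\dots,n\}$ is at an extreme — this requires checking the sign of $dn(n+1) - 10(m+1)(n+1-m)$ at $m \approx n/2$, which is negative for $n \leq 5$ (so the unimodality argument does not apply uniformly, though those small cases can be checked directly); and for $d = 3$ your statement that the binding value "sits near the middle" is actually false for large $n$, since $f(n) = 3(n+1) - 10$ grows only linearly while the central value grows exponentially, so the minimum migrates to $m = n$ as $n$ increases. Neither affects the conclusion, because the extremes $m = 2$ and $m = n$ are handled separately, but you should tighten the unimodality claim so that the argument is airtight rather than merely plausible. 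The paper sidesteps these subtleties at the cost of a less unified case split.
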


In the orthogonal case, the Shimura subvarieties of $\SO(n,2)$-type are defined by spin groups,
and the symplectic representations are always $\tau$-primary of spinor type when no trivial subrepresentation is allowed.
Furthermore, when restricted to a Shimura curve $C$, the maximal unitary part $\Ucal_C$ of $\Ecal_C$ is completely determined by the contribution from the embeddings
$F\mono\Rbb$ along which the quadratic spaces are definite.
Hence the conclusion is simpler:
\begin{theorem}[(Orthogonal case)]\label{main theorem in the orthogonal case} Let $M\mono\Acal_g$ be a Shimura subvariety of $\SO(n,2)$-type $(n\geq 1)$,
with $\Gbf^\der=\Res_{F/\Qbb}\Spin(H)$ for some quadratic space $H$ over a totally real field $F$ subject to the natural constraints of signatures as above.
Then $M$ is not contained generically in $\Tcal_g$ if $d=[F:\Qbb]\geq 6$.
\end{theorem}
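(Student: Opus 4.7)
The plan is to reduce to Theorem \ref{exclusion of Shimura curves} by exhibiting a Zariski dense family of Shimura curves in $M$ whose canonical Higgs bundles are almost entirely unitary. Following the Kudla-Rapoport description, $M$ contains Shimura subvarieties of every codimension, obtained from $F$-rational orthogonal decompositions $H=H'\oplus H''$; I would take $\dim_F H'=3$ and arrange that $H'_\tau$ has signature $(1,2)$ while $H'_\sigma$ is positive definite for every $\sigma\ne\tau$. The resulting subdatum $(\Gbf',X')$ with $\Gbf'^\der=\Res_{F/\Qbb}\Spin(H')$ defines a Shimura curve $C\subset M$, since $\Gbf'^\der(\Rbb)=\Spin(1,2)\times\Spin(3)^{d-1}$ has only one non-compact factor $\Spin(1,2)\cong SL_2(\Rbb)$. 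Varying $H'$ inside $H$ produces a Zariski dense family of such curves in $M$.

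Next I would decompose $\Ecal_C$ according to the restriction of the symplectic representation $\rho:\Gbf^\der\to\Sp_{2g}$ to $\Gbf'^\der$. Tensoring with $\Cbb$, $\rho$ splits along the $d$ embeddings $\sigma:F\mono\Cbb$ as $\bigoplus_\sigma\rho_\sigma$, each $\rho_\sigma$ being (a multiple of) the spin representation of $\Spin(H_\sigma)\otimes\Cbb$; under the orthogonal splitting $H=H'\oplus H''$ this branches as $\rho_\sigma|_{\Spin(H'_\sigma)}\cong S(H'_\sigma)\otimes S(H''_\sigma)$ via the standard Clifford algebra rule. For every $\sigma\ne\tau$, both $\Spin(H'_\sigma)\cong\Spin(3)$ and $\Spin(H''_\sigma)$ are compact, so the whole $\sigma$-summand gives a unitary Higgs subbundle; for $\sigma=\tau$, the factor $\Spin(H'_\tau)\cong SL_2(\Rbb)$ acts through its irreducible non-unitary standard representation, and tensoring with a unitary representation of $\Spin(H''_\tau)$ cannot produce a unitary subrepresentation, so the entire $\tau$-summand contributes to $\Fcal_C$. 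Combined with the Hodge symmetry between $(-1,0)$ and $(0,-1)$ parts, this yields $\rank\Fcal_C^{-1,0}=g/d$ and $\rank\Ucal_C^{-1,0}=(d-1)g/d$.

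Finally, I would invoke Theorem \ref{exclusion of Shimura curves}. The Shimura-curve bound $\rank\Fcal_C^{-1,0}\le(g-2)/5$ becomes $g(d-5)\ge 2d$, which for $d\ge 6$ holds once $g$ exceeds the explicit threshold $2d/(d-5)$; this threshold is in turn satisfied thanks to the lower bounds on $g$ dictated by the dimension of the spin representation and by Satake's rationality multiplicities. Excluding each Shimura curve $C$ in the family from generic containment in $\Tcal_g$ and using the Zariski density of the family then excludes $M$ itself. The main obstacle I anticipate is the clean verification that $\rho_\tau$ admits no unitary sub-Higgs bundle once Satake's rationality constraints on multiplicities are imposed, together with checking the numerical inequality $g(d-5)\ge 2d$ in the small-$g$ boundary configurations where the asymptotic estimate is tight.
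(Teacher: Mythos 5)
Your plan matches the paper's own proof essentially line for line: Shimura curves are cut out from $F$-rational orthogonal splittings of $H$, the branching of the spin representation under $\Spin(W_\tau)\subset\Spin(H_\tau)$ shows the $\tau$-summand lands wholly in $\Fcal_C$, and Theorem \ref{exclusion of Shimura curves} is applied together with the Hecke-translate density argument of Lemma \ref{reduction to the open Torelli locus}. The two points you flag as remaining are exactly what the paper dispatches via Proposition \ref{decomposition over a curve in the orthogonal case}: the explicit exterior-algebra branching of $P(T)$ (resp.\ $P_\pm$) shows no trivial summand appears at $\tau$, a possible trivial $\Qbb$-subrepresentation $V_0$ (which you silently set to zero) is absorbed into $\Ucal_C$, giving $\rank\Fcal_C^{-1,0}=\tfrac12\dim_F L\leq g/d$ rather than the exact $g/d$ you wrote, and the boundary case $d=6$ is closed by the lower bound $g\geq d\cdot 2^{[(n+2)/2]}\geq 12=2d/(d-5)$ coming from the dimension of the spinor representation.
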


\subsection{Organization of the paper}\label{subsection Organization of the paper}
In Section \ref{seection Preliminaries}, we recall the basic notions of Shimura subvarieties in $\Acal_g$ and the decomposition of Higgs bundles on them.
In Section \ref{section Xiao's technique for curves}, we prove the bound of the rank of the unitary part in the Higgs bundle associated to a family of semi-stable curves, and obtain the numerical criterion to exclude Shimura curves from the Torelli locus.
In Section \ref{section Shimura varieties of unitary and orthogonal types}, we explain the construction of Shimura data of unitary type and orthogonal type associated to Hermitian spaces and quadratic spaces respectively, and specify to the case of $\SU(n,1)$-type and $\SO(n,2)$-type.
In Section \ref{section Symplectic representations in the unitary case}, we collect facts from Satake's classification of symplectic representations of semi-simple groups of $\SU(n,1)$-type, and we compute the rank of the unitary part of the Higgs bundle on  Shimura curves embedded in Shimura subvarieties of $\SU(n,1)$-type, which leads to the proofs of the main results in the unitary case.
Finally, in Section \ref{section Symplectic representations in the orthogonal case}, we recall the construction of spinor representations and Satake's classification for spin groups, and we compute the rank of the unitary part which ends the proof in the orthogonal case.

\subsection*{Acknowledgements} This work is supported by SFB/Transregio 45 Periods, Moduli Spaces and Arithmetic of Algebraic Varieties of the DFG (Deutsche Forschungsgemeinschaft),
and also supported by National Key Basic Research Program of China (Grant No. 2013CB834202). The first named author is partially supported by National Natural Science Foundation of China (Grant No. 11301495).

During the preparation of this work, we  learned the sudden passing away of Professor Gang Xiao. Prof. Xiao has been a friend and a guide to mathematics for many years, with great contributions to and impacts on the research of algebraic geometry in China. We would like to express our gratitude, grief, and memory for Prof. Xiao through this paper, where his slope inequality plays a crucial role.

\subsection*{Notations and conventions}\label{subsection Notations and conventions}
Denote by $\Sbb$ the Deligne torus $\Res_{\Cbb/\Rbb}\Gbb_{\mrm,\Rbb}$,   $\ibf$ a fixed square root of -1 in $\Cbb$, $\adele$ the ring of finite adeles of $\Qbb$, and $\Qac$ the algebraic closure of $\Qbb$ inside $\Cbb$. A $\Qbb$-group $\Gbf$ is compact if the Lie group given by $\Gbf(\Rbb)$ is compact.

For $k$ a field, by linear $k$-group we mean affine algebraic group $k$-schemes, among which we have reductive $k$-groups, semi-simple $k$-groups, $k$-tori, etc. defined in the standard way.
If $\Gbf$ is a reductive $\Qbb$-group, we write $\Gbf^\circ$ for the neutral component of $\Gbf$ for the Zariski topology,
$\Gbf(\Rbb)^+$ for the neutral component of the Lie group $\Gbf(\Rbb)$ for the analytic topology (i.e., the one on the manifold $\Gbf(\Rbb)$ locally given by the archimedean metric),
and $\Gbf(\Rbb)_+$ for the preimage of $\Gbf^\ad(\Rbb)^+$ \wrt the homomorphism $\Gbf(\Rbb)\ra\Gbf^\ad(\Rbb)$.
We also write $\Gbf(\Qbb)^+$ resp. $\Gbf(\Qbb)_+$ for the intersection $\Gbf(\Qbb)\cap\Gbf(\Rbb)^+$ resp. $\Gbf(\Qbb)\cap\Gbf(\Rbb)_+$.
We write $\Xfrak(\Gbf)$ for the set of $\Rbb$-group homomorphisms $\Hom(\Sbb,\Gbf_\Rbb)$, on which the Lie group $\Gbf(\Rbb)$ acts from the left by conjugation.

\section{Preliminaries}\label{seection Preliminaries}
\subsection{Shimura varieties and Shimura subvarieties}\label{subsection Shimura varieties and Shimura subvarieties}
We first recall the classical definitions of Shimura data and Shimura varieties given in \cite{deligne pspm}.

\begin{definition}[(Shimura data)]\label{Shimura data}
(1) A Shimura datum is a pair $(\Gbf,X)$, where $\Gbf$ is a connected reductive $\Qbb$-group,
whose adjoint quotient $\Gbf^\ad$ admits no compact $\Qbb$-factors, and $X\subset\Xfrak(\Gbf)$ is a single $\Gbf(\Rbb)$-orbit, such that for any $x\in X$ we have
\begin{enumerate}
\item[SD1.] the composition $\Ad_\Gbf\circ x:\Sbb\ra\Gbf_\Rbb\ot{\Ad_\Gbf}\ra\GL_{\gfrak,\Rbb}$ defines a rational pure Hodge structure of type $\{(-1,1),(0,0),(1,-1)\}$ on the Lie algebra $\gfrak=\Lie\Gbf$;
\item[SD2.] the conjugation by $x(\ibf)$ induces a Cartan involution on the Lie group $\Gbf^\ad(\Rbb)$.
\end{enumerate}

Under these constraints, $X$ is a complex manifold on which $\Gbf(\Rbb)$ acts by holomorphic automorphisms.
The center of $\Gbf(\Rbb)$ acts on $X$ trivially, and each connected component $X^+$ of $X$ is the Hermitian symmetric domain
(i.e., of non-compact type) associated to the semi-simple Lie group $\Gbf^\ad(\Rbb)^+$.

(2) Let $(\Gbf,X)$ be a Shimura datum. A Shimura subdatum of $(\Gbf,X)$ is a Shimura datum $(\Gbf',X')$ such that \begin{itemize}
\item $\Gbf'$ is a $\Qbb$-subgroup of $\Gbf$, and $X'$ is a subset of $X$;
\item the inclusion $X'\mono X$ is equivariant \wrt the Lie group homomorphism $\Gbf'(\Rbb)\mono\Gbf(\Rbb)$.
\end{itemize}

It turns out that $X'$ is a subset of $\Xfrak(\Gbf)$ consisting of points $x:\Sbb\ra\Gbf_\Rbb$ that have their image in $\Gbf'_\Rbb$.
When $X'$ is zero dimensional, $\Gbf'$ has to be a $\Qbb$-torus, and $X'$ is a single point.
In this case $(\Gbf',X')$ is said to be a CM subdatum of $(\Gbf,X)$, motivated from the notion of CM abelian varieties, cf. Example \ref{the Siegel modular varieties and its Shimura subvarieties}.
\end{definition}

\begin{definition}[(Shimura varieties)]\label{Shimura varieties}
Let $(\Gbf,X)$ be a Shimura datum, and let $K\subset\Gbf(\adele)$ be a \cosg.
The Shimura variety associated to $(\Gbf,X)$ at level $K$ is a quasi-projective algebraic variety $M_K(\Gbf,X)$ over $\Qac$,
whose complex points are described as $$M_K(\Gbf,X)(\Cbb)=\Gbf(\Qbb)\bsh \big(X\times\Gbf(\adele)/K\big)$$
where $\Gbf(\Qbb)$ acts on the product $X\times\Gbf(\adele)/K$ from the diagonal.
Fix $X^+$ a connected component of $X$ and let $\Sigma$ be a set of representatives in $\Gbf(\adele)$ of the finite double quotient $\Gbf(\Qbb)_+\bsh\Gbf(\adele)/K$,
then we have $$M_K(\Gbf,X)\isom\Gbf(\Qbb)_+\bsh \big(X^+\times\Gbf(\adele)/K\big)\isom\coprod_{g\in\Sigma}\Gamma_K(g)\bsh X^+$$
where $\Gamma_K(g)=gKg^\inv\cap\Gbf(\Qbb)_+$ is a congruence subgroup of $\Gbf(\Qbb)_+$, which acts on $X^+$ through its image in $\Gbf^\ad(\Qbb)^+$.

Baily and Borel \cite{baily borel} have shown that the quotients $\Gamma_K(g)\bsh X^+$ are normal quasi-projective algebraic varieties over $\Cbb$.
Deligne, Milne, and Borovoi, etc. have shown that the double quotient $\Gbf(\Qbb)\bsh\big(X\times\Gbf(\adele)/K\big)$ admits a unique canonical model over the reflex field $E(\Gbf,X)$,
which is a number field embedded in $\Cbb$, and each connected component $\Gamma_K(g)\bsh X^+$ is defined over a finite abelian extension of $E(\Gbf,X)$, cf. \cite{milne05}.
In our study it suffices to treat Shimura varieties as complex algebraic varieties.
\end{definition}

We will mainly work with connected Shimura data and connected Shimura varieties, in which setting the notion of Shimura subvarieties replace the special subvarieties in the sense of \cite{mo11}.

\begin{definition}[(Connected Shimura data and varieties)]\label{connected Shimura data and varieties}
A connected Shimura datum is a triple $(\Gbf,X;X^+)$ with $(\Gbf,X)$ a Shimura datum and $X^+$ a connected component of $X$.

A connected Shimura variety defined by the connected Shimura datum $(\Gbf,X;X^+)$ is a space of the form $M=\Gamma\bsh X^+$,
where $\Gamma$ is a congruence subgroup of $\Gbf^\der(\Qbb)^+$. Since $\Gbf^\der(\Rbb)^+$ acts on $X^+$ transitively, it follows from the theorem of Baily and Borel that $\Gamma\bsh X^+$ is a complex algebraic variety.

For simplicity we assume that in $\Gamma\bsh X^+$ the congruence subgroup $\Gamma$ is taken to be small enough so that it is free of torsion
and that the homomorphism $\Gamma\ra\Gbf^\ad(\Qbb)$ is injective. This is always possible because the center of $\Gbf^\der(\Qbb)$ is finite.
Thus the action of $\Gamma$ on $X^+$ is faithful and it is isomorphic to the topological fundamental group of $M$. We write $\wp_\Gamma:X^+\ra \Gamma\bsh X^+$ for the uniformization map $x\mapsto \Gamma x$.

\end{definition}

\begin{definition}[(Shimura subvarieties)]\label{Shimura subvarieties} Let $M=\Gamma\bsh X^+$ be a connected Shimura variety,
defined by $(\Gbf,X;X^+)$ and some congruence subgroup $\Gamma\subset\Gbf^\der(\Qbb)^+$, with $\wp_\Gamma$ the uniformization map.

A connected Shimura subdatum of $(\Gbf,X;X^+)$ is a triple of the form $(\Gbf',X';X'^+)$, where $(\Gbf',X')$ is a Shimura subdatum of $(\Gbf,X)$,
and $X'^+$ is a connected component of $X'$ which is contained in $X^+$. The Shimura subvariety of $M$ associated to the subdatum $(\Gbf',X';X'^+)$ is $M'=\wp_\Gamma(X'^+)$.


When $(\Gbf',X')$ is a CM subdatum, the Shimura subvariety is a single point, and we call it the CM point in $M$ associated to $(\Gbf',X';X'^+)$ (in this case $X'=X'^+$ consists of a single point).

\end{definition}
Note that Shimura subvarieties in $\Gamma\bsh X^+$ can be equivalently characterized as totally geodesic subvarieties containing CM points, due to \cite{moonen98}.

\begin{remark}
In \cite{milne05} the notion of connected Shimura varieties is defined as quotients of the form $\Gamma\bsh X^+$,
where $X^+$ comes from some connected Shimura datum $(\Gbf,X;X^+)$, and $\Gamma$ is a congruence subgroup in $\Gbf^\ad(\Qbb)^+$.
Choose $\Gamma$ to be a congruence subgroup of $\Gbf^\der(\Qbb)^+$ and write $\Gamma'$ for its image in $\Gbf^\ad(\Qbb)^+$,
then the natural projection $f:\Gamma\bsh X^+\ra\Gamma'\bsh X^+$ is a finite morphism of complex algebraic varieties.
Moreover, let $Z\subset \Gamma\bsh X^+$ be a geometrically irreducible closed subvariety,
then $Z$ is a Shimura subvariety \ifof $f(Z)$ is a Shimura subvariety in $\Gamma'\bsh X^+$,
because $Z$ is a Shimura subvariety \ifof one geometrically irreducible component of $\wp_\Gamma^\inv(Z)$ in $X^+$ is $X_1^+$
coming from some connected Shimura subdatum $(\Gbf_1,X_1;X_1^+)$, and this is equivalent to $f(Z)\subset\Gamma'\bsh X^+$ being a Shimura subvariety.

\end{remark}

We also have the following
\begin{lemma}[(Uniformization of Shimura subvarieties)]\label{uniformization of Shimura subvarieties}(1) Let $k$ be a field, and $\Gbf$ be a connected reductive $k$-group, with $\Hbf\subset\Gbf$ a connected reductive $k$-subgroup. Then the normalizer $\Nbf=\Nbf_\Gbf\Hbf$ in $\Gbf$ is reductive, and the $k$-subgroup generated by $\Hbf$ and its centralizer $\Zbf=\Zbf_\Gbf\Hbf$ is of finite index in $\Gbf$.

(2) Let $M'\subset M$ be a Shimura subvariety defined by some connected Shimura subdatum $(\Gbf',X';X'^+)\subset(\Gbf,X;X^+)$ using some torsion-free congruence subgroup $\Gamma$, i.e., $M=\Gamma\bsh X^+$ and $M'=\wp_\Gamma(X'^+)$. Take $\Gamma'=\Gamma\cap\Gbf'^\der(\Qbb)^+$, then the evident map $\Gamma'\bsh X'^+\ra\wp_\Gamma(X'^+)$ is a finite \'etale covering.

\end{lemma}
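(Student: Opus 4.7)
The strategy is to treat (1) as a purely group-theoretic statement and then apply it in (2) to bound the size of the fibers of the uniformization map. For part (1), I would use the conjugation homomorphism $\ad:\Nbf\to\Aut(\Hbf)$. Its kernel is $\Zbf$ by definition, and its image contains the inner automorphism subgroup $\mathrm{Inn}(\Hbf)=\Hbf/(\Hbf\cap\Zbf)$, so composing with $\Aut(\Hbf)\twoheadrightarrow\mathrm{Out}(\Hbf)$ gives an injection $\Nbf/(\Hbf\cdot\Zbf)\hookrightarrow\mathrm{Out}(\Hbf)$. Since $\mathrm{Out}(\Hbf)$ is finite for a connected reductive $\Hbf$ (it embeds into the finite automorphism group of the based root datum), $\Hbf\cdot\Zbf$ has finite index in $\Nbf$. (The printed phrase ``of finite index in $\Gbf$'' must be read ``in $\Nbf$''; indeed, taking $\Hbf$ a maximal torus in $\Gbf=\GL_n$ with $n\geq 2$ rules out the literal statement.) Combined with the classical fact that the centralizer of a connected reductive subgroup in a connected reductive group is again reductive, and that $\Hbf\cdot\Zbf$ is an almost-direct product of reductive groups hence itself reductive, the finite-index property implies reductivity of $\Nbf$.

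For part (2), set $\Nbf=\Nbf_\Gbf(\Gbf')$ and $\Zbf=\Zbf_\Gbf(\Gbf')$; then $\Nbf(\Rbb)^+$ acts on $X'^+$ by conjugation, while $\Zbf(\Rbb)$ acts trivially on all of $X'$ since its elements commute with the image of every $x\in X'$. Define
\[
\Gamma_{X'^+}:=\{\gamma\in\Gamma:\gamma\cdot X'^+=X'^+\}.
\]
The map $\Gamma_{X'^+}\bsh X'^+\to M$ is injective with image $M'=\wp_\Gamma(X'^+)$; moreover $\Gamma'\subset\Gamma_{X'^+}$ since $\Gbf'^\der(\Rbb)^+$ acts transitively on $X'^+$, so the natural map $\Gamma'\bsh X'^+\to M'$ factors as
\[
\Gamma'\bsh X'^+ \longrightarrow \Gamma_{X'^+}\bsh X'^+ \simeq M'.
\]

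The key claim is that $[\Gamma_{X'^+}:\Gamma']$ is finite. I would first establish $\Gamma_{X'^+}\subset\Gamma\cap\Nbf(\Qbb)^+$: if $\gamma\cdot X'^+=X'^+$, then at a Hodge-generic $x\in X'^+$ (whose Mumford--Tate group generates $\Gbf'$ as a $\Qbb$-group) the requirement $\gamma x\gamma^{-1}\in X'^+\subset\Hom(\Sbb,\Gbf'_\Rbb)$ forces $\gamma\Gbf'\gamma^{-1}=\Gbf'$. Applying part (1) to $\Hbf=\Gbf'$ yields $[\Nbf:\Gbf'^\der\cdot\Zbf]<\infty$, whence commensurability of arithmetic subgroups gives $[\Gamma_{X'^+}:\Gamma\cap(\Gbf'^\der\cdot\Zbf)(\Qbb)^+]<\infty$. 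Since $\Zbf(\Rbb)$ acts trivially on $X'^+$, the latter group acts on $X'^+$ through its quotient modulo $\Gamma\cap\Zbf(\Qbb)^+$, and this quotient differs from $\Gamma'$ only by a finite factor; concatenating these estimates produces $[\Gamma_{X'^+}:\Gamma']<\infty$.

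\'Etaleness of the resulting finite covering $\Gamma'\bsh X'^+\to M'$ is then automatic because $\Gamma$ is torsion-free: $\Gamma$, $\Gamma_{X'^+}$ and $\Gamma'$ all act freely and properly discontinuously on the relevant Hermitian symmetric domains, so the quotients are smooth complex manifolds and the map is a covering of degree $[\Gamma_{X'^+}:\Gamma']$. The principal obstacle I anticipate is the Hodge-theoretic inclusion $\Gamma_{X'^+}\subset\Nbf(\Qbb)^+$: one must argue that an element sending $X'^+$ to itself normalizes $\Gbf'$ as a $\Qbb$-algebraic group, relying on the fact (implicit in the choice of Shimura subdatum) that the generic Mumford--Tate group along $X'^+$ equals $\Gbf'$, together with a careful passage between $\Qbb$- and $\Rbb$-points of the reductive groups involved.
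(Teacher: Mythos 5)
Your part (2) follows essentially the same route as the paper: identify the normalizer of $X'^+$ in the real points with (a finite-index overgroup of) the real points of a rational normalizer, invoke part (1) to split that normalizer up to finite index as $\Gbf'^\der\cdot\Zbf$ with $\Zbf$ contributing only a finite or trivial amount to the arithmetic group acting on $X'^+$, and conclude \'etaleness from torsion-freeness. Your reading of the misprint \emph{of finite index in $\Gbf$} as \emph{in $\Nbf$} is also correct, and the counterexample of a maximal torus in $\GL_n$ is apt; the paper's own proof argues exactly for finite index in $\Nbf$.

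Part (1), however, has a genuine gap. You assert that $\mathrm{Out}(\Hbf)$ is finite for any connected reductive $\Hbf$ because it \emph{embeds into the finite automorphism group of the based root datum}. This is false as soon as the radical $Z(\Hbf)^\circ$ has dimension $\geq 2$: the automorphism group of the based root datum of a torus $\Gbb_\mrm^r$ (trivial root system) is all of $\GL_r(\Zbb)$, and $\mathrm{Out}(\Gbb_\mrm^r)\cong\GL_r(\Zbb)$ is infinite. So the injection $\Nbf/(\Hbf\cdot\Zbf)\hookrightarrow\mathrm{Out}(\Hbf)$ by itself does not yield finite index. What is true — and what the paper actually proves — is that the \emph{image} of $\Nbf$ in $\mathrm{Out}(\Hbf)$ is finite, and this requires a separate rigidity input. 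The paper handles $\Hbf$ semi-simple (where $\mathrm{Out}$ is indeed finite via Dynkin-diagram symmetries and permutation of isomorphic factors) and then reduces the general reductive case to an almost-direct product with a central torus, where it invokes rigidity of tori: the identity component $\Nbf^\circ$ must act on the torus through a connected subgroup of the discrete group $\GL_r(\Zbb)$, hence trivially, so $\Nbf^\circ$ centralizes the torus and the normalizer-mod-centralizer of the torus is the finite group $\Nbf/\Nbf^\circ$. You should patch your argument along these lines; as written the finiteness claim would simply fail whenever $\Gbf'$, hence $\Hbf$, has a positive-dimensional center — which is the typical situation for Shimura subdata of Hodge type.

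A smaller imprecision in part (2): from $\gamma\cdot X'^+=X'^+$ and Hodge-genericity of $x$ you can only conclude that $\gamma$ normalizes the generic Mumford--Tate group $\MT(X'^+)$, not $\Gbf'$ itself (the Shimura subdatum $(\Gbf',X')$ may have $\Gbf'$ strictly larger than $\MT(X'^+)$). The paper is careful to set $\Hbf=\MT(X'^+)$ and work with $\Nbf_{\Gbf^\der}\Hbf$; since $\Gbf'^\der=\Hbf^\der$, the end result is the same, but you should normalize $\MT(X'^+)$ rather than $\Gbf'$.
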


\begin{proof}
(1) The main arguments below are reproduced from the proof by J. Humphreys in \cite{humphreys mathoverflow}, as we have not yet found explicit references for this seemingly well-known fact.

We may assume that $k$ is algebraically closed. If $\Hbf$ is semi-simple, then it is isogeneous to a finite direct product of simple $k$-groups. The natural $k$-group homomorphism $\Nbf\ra\Aut_k(\Hbf)$ sending $n$ to the conjugation by $n$ has the centralizer $\Zbf=\Zbf_\Gbf\Hbf$ as its kernel, and its image contains the inner automorphisms given by $\Hbf$ itself. Hence its image is of finite index in $\Aut_k(\Hbf)$ because $\Aut_k(\Hbf)$ only differs from $\Hbf^\ad$ by a finite group of outer automorphisms: when $\Hbf$ is simple, this follows from the finiteness of the automorphism group of the Dynkin diagram; when $\Hbf$ is semi-simple, it suffices to join the finite permutations among simple factors of the same types. Finally, when $\Hbf$ is reductive, it is isogeneous to the direct product of a semi-simple $k$-group with a $k$-torus, and it suffices to argue by the rigidity of torus, i.e., the centralizer of a torus is of finite index in its normalizer, cf. \cite[\S\,16.3]{humphreys linear algebraic groups}.

(2) Recall that the Mumford-Tate group $\MT(S)$ of a subset $S\subset X$ is the smallest $\Qbb$-subgroup $\Hbf$ such that $\Hbf_\Rbb\supset x(\Sbb)$ for all $x\in S$. It is clear from \cite{andre fixed part} that $\Gbf'^\der=\Hbf^\der$ for $\Hbf=\MT(X'^+)$. For $g\in\Gbf^\der(\Rbb)^+$ and $x'\in X'^+$, $g(x')\in X'^+$ implies that $gx'(\Sbb)g^\inv\subset\Hbf_\Rbb$, and $x'$ running through $X'^+$ gives $g$ normalizing $\Hbf_\Rbb$ i.e., $g\in\Nbf_\Gbf\Hbf(\Rbb)$. Conversely, if $g\in\Nbf_{\Gbf^\der}\Hbf(\Rbb)^+$, then the conjugation by $g$ leaves the $\Gbf'$ stable, hence it stabilizes $X'$, and it stabilizes $X'^+$ because $g$ lies in the same path-connected component as the neutral element. Here the notation $\Nbf_{\Gbf^\der}\Hbf$ makes sense because both $\Gbf^\der$ and $\Hbf$ are $\Qbb$-subgroups of $\Gbf$.

We thus put $\Nbf=\Nbf_{\Gbf^\der}\Hbf=\Nbf_{\Gbf}\MT(X'^+)$, and we have just proved that $\Nbf(\Rbb)^+$ is of finite index in the normalizer $N$ of $X'^+$ in $\Gbf^\der(\Rbb)^+$. From (a) we know further that $\Nbf$ is isogeneous to a product $\Zbf\times\Hbf^\der=\Zbf\times\Gbf'^\der$, where $\Zbf$ is generated by the centralizer of $\Hbf$ and the center of $\Hbf$. Note that $\Zbf(\Rbb)$ is actually compact, because conjugation by $g\in\Zbf(\Rbb)$ commutes with any $x'(\ibf)$ for $x'\in X'^+$, and the centralizer of $x'(\ibf)$ in $\Gbf(\Rbb)$ is compact modulo the center because it induces a Cartan involution on $\Gbf^\ad_\Rbb$. We may thus shrink $\Gamma$ so that $\Gamma\cap N=\Gamma\cap\Nbf(\Qbb)^+$, which is further equal to $\Gamma\cap\Hbf(\Qbb)^+$ because $\Gamma\cap\Zbf(\Qbb)^+$ is trivial for $\Gamma$ small enough. Hence the action of $\Gamma\cap\Nbf(\Qbb)^+$ on $X'^+$ only differs from the natural action of $\Gamma\cap\Gbf'^\der(\Qbb)^+$ by a finite quotient, which is \'etale because we have only worked with torsion-free congruence subgroups.
\end{proof}

\begin{example}[(Siegel modular varieties and its Shimura subvarieties)]\label{the Siegel modular varieties and its Shimura subvarieties}
Let $(V,\psi)$ be a finite-dimensional symplectic vector space over $\Qbb$ of dimension $2g$. We have the pair $(\GSp_V,\Hscr_V^\pm)$ where \begin{itemize}
\item $\GSp_V$ is the $\Qbb$-group of symplectic similitude of $(V,\Qbb)$;
\item $\Hscr_V^\pm$ is the set of polarizations of $(V,\psi)$, i.e., complex structures $h:\Cbb^\times\ra\GL_\Rbb(V_\Rbb)$ such that $(x,y)\in V_\Rbb\times V_\Rbb\mapsto \psi(h(\ibf)x,y)$ is symmetric and definite.
\end{itemize}
Then $\Hscr_V^\pm$ is identified with the Siegel double half space,
which is the $\GSp_V(\Rbb)$-orbit of any homomorphism $x:\Sbb\ra\GL_{V,\Rbb}$ that polarizes $\psi$.
It consists of two connected components $\Hscr_V^+\coprod\Hscr_V^-$ depending on the sign of the definite quadratic form
$\psi(x(\ibf)\_,\_)$. The pair $(\GSp_V,\Hscr_V^\pm)$ is a Shimura datum, and we call it the Siegel datum defined by the symplectic space $(V,\psi)$.

Assume that $(V,\psi)$ comes from an integral symplectic module $(L,\psi_L)$ over $\Zbb$ whose discriminant  equals 1.
Take the $\ell$-th principal level structure $K=K(\ell)=\Ker(\GSp_L(\Zbhat)\ra\GSp_L(\Zbb/\ell))$,
where $\ell\geq 3$ is an integer, we have the Shimura variety $M_K(\GSp_V,\Hscr_V^\pm)$, with its canonical model over $\Qbb$. As a $\Qbb$-scheme,
it is isomorphic to the composition $\Acal_{g,\ell}\ra\Spec\Qbb(\zeta_\ell)\ra\Spec\Qbb$,
where $\Acal_{g,\ell}\ra\Spec\Qbb(\zeta_\ell)$ is the fine moduli scheme over the $\ell$-th cyclotomic field $\Qbb(\zeta_\ell)$
parametrizing principally polarized abelian schemes with full level-$n$ structures.
We may identify $\Acal_{g,\ell}$ with a geometrically connected component of $M_K(\GSp_V,\Hscr_V^\pm)$,
which is a connected Shimura variety isomorphic to $\Gamma(\ell)\bsh \Hscr_V^+$, with $\Gamma(\ell)$ the $\ell$-th principal congruence subgroup $\Gamma(\ell)=\Ker(\Sp_L(\Zbb)\ra\Sp_L(\Zbb/\ell))$.

In the rest of the paper we prefer to write $\Acal_V$ or $\Acal_g$ for the connected Shimura variety defined by $(\GSp_V,\Hscr_V^\pm;\Hscr_V^+)$
with some principal torsion-free congruence subgroup $\Gamma=\Gamma(\ell)\subset\Sp_V(\Qbb)$ ($\ell\geq 3$) given by a suitable integral structure of $(V,\psi)$. 

Shimura subvarieties in $\Acal_V$ are often called Shimura subvarieties of Hodge type, because they are defined by Shimura data of Hodge type,
i.e., Shimura subdata of the Siegel datum $(\Gbf,X)\mono(\GSp_V,\Hscr_V^\pm)$.
Treat the inclusion $\Gbf\mono\GSp_V$ as an algebraic representation $\rho:\Gbf\ra\GSp_V\ra\GL_V$,
then for any $x\in X(\subset\Hscr_V^\pm)$, the composition $\rho\circ x$ defines a complex structure on $V_\Rbb$, hence  a rational Hodge structure of type $\{(-1,0),(0,-1)\}$ on $V$.
The Shimura subvarieties they define in $\Acal_V$ are moduli subspaces parametrizing principally polarized abelian varieties with prescribed Hodge classes, cf. \cite{deligne pspm}.

In particular, when $(\Gbf',X')$ is a CM subdatum, i.e., $\Gbf'$ is a $\Qbb$-torus and $X'$ consists of a single point $x$,
the Shimura subvariety of $M_K(\GSp_V,\Hscr_V^\pm)$ defined by $(\Gbf',X')$ is a CM point, i.e., it corresponds to a CM abelian variety.
\end{example}

\begin{remark}[(Holomorphic equivariant embeddings)]\label{holomorphic equivariant embeddings} If $(\Gbf',X';X'^+)\subset(\Gbf,X;X^+)$ is a connected Shimura subdatum, then the map $X'^+\mono X^+$ is an embedding of complex submanifolds, equivariant \wrt the Lie group homomorphism $\Gbf'^\der(\Rbb)^+\mono\Gbf^\der(\Rbb)^+$. It is an holomorphic equivariant embedding of Hermitian symmetric domains in the sense of \cite{satake embedding}, subject to the further constraint (H2), which we will recall later in Section \ref{section Symplectic representations in the unitary case}, with emphasis on the Siegel case $(\Gbf,X;X^+)=(\GSp_V,\Hscr_V^\pm;\Hscr_V^+)$.

\end{remark}

We end this subsection with the notion of Hecke translation and a reduction lemma for the study of the Oort conjecture.

\begin{definition}[(Hecke translation)]\label{Hecke translation} Let $M=\Gamma\bsh X^+ $ be a Shimura variety defined by $(\Gbf,X;X^+)$, and let $M'\subset M$ be a Shimura subvariety defined by $(\Gbf',X';X'^+)$. For $q\in\Gbf(\Qbb)^+$ we have the Hecke correspondence $$T_q:\Gamma\bsh X^+\ot{a_q}\la \Gamma_q\bsh X^+\ot{b_q}\ra \Gamma\bsh X^+$$ using $\Gamma_q=\Gamma\cap q^\inv\Gamma q$, $a_q(\Gamma_q x)=\Gamma x$, and $b_q(\Gamma_q x)=\Gamma qx$. Both $a_q$ and $b_q$ are finite morphisms of algebraic varieties, and the Hecke translation of a subset $Z\subset M$ by $q$ is understood as the subset $T_q(Z):=b(a^\inv(Z))$, which actually induces a morphism on the cycle groups of $M$. We also define the total Hecke orbit of $Z$ in  $M$ to be the union $\bigcup_{q\in\Gbf(\Qbb)^+}T_q(Z)$.

Note that if $q\in\Gbf(\Qbb)^+$ defines the Hecke correspondence $T_q$, then by the coset decomposition $\Gamma=\coprod_j\Gamma_qa_j$ for finitely many $a_j$'s we see that $T_q(\Gamma x)=\{\Gamma qa_j x\}$. If $M'=\wp_\Gamma(X'^+)$ is a Shimura subvariety defined by $(\Gbf',X';X'^+)$, then $T_q(M')$ is the finite union of Shimura subvarieties given by $(qa_j\Gbf'(qa_j)^\inv,qa_jX';qa_jX'^+)$, and each of these Shimura subvarieties is also called a Hecke translate of $M'$. In particular, Hecke translations respect cycles given by Shimura subvarieties, and the Shimura subdata only differ by rational conjugations.

We also mention  that one may simply use $q\in\Gbf^\der(\Qbb)^+$, because the central elements in $\Gbf(\Qbb)^+$ contribute trivially to the Hecke correspondences.

\end{definition}

It is clear that the definition above is only a rational version of the adelic Hecke translation in \cite{deligne pspm}.

\begin{lemma}[(Reduction to the open Torelli locus)]\label{reduction to the open Torelli locus} Let $M\subset\Acal_V=\Acal_g$ be a Shimura subvariety defined by $(\Gbf,X;X^+)$ contained generically in $\Tcal_g$, and let $M'\subset M$ be a Shimura subvariety of dimension $>0$ defined by $(\Gbf',X';X'^+)$. Then
there exists $g\in \Gbf^\der(\Qbb)^+$ such that the Shimura subvariety $M''$ defined by $(g\Gbf'g^\inv, gX';gX'^+)$ is contained generically in $\Tcal_g$, i.e., $\Tcal_g^\circ\cap M''\neq \emptyset$.

\end{lemma}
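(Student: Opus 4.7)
The plan is to exploit the fact that $M \cap \Tcal_g^\circ$ is a non-empty Zariski open subset of $M$, together with the density of the Hecke orbit of $M'$ in $M$ coming from real approximation.

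First, since $\Tcal_g^\circ$ is Zariski open in $\Tcal_g$ and $M \subseteq \Tcal_g$, the intersection $U := M \cap \Tcal_g^\circ$ is a non-empty Zariski open (hence analytically open and dense) subset of $M$. For each $g \in \Gbf^\der(\Qbb)^+$, the conjugated triple $(g\Gbf'g^\inv, gX'; gX'^+)$ is a connected Shimura subdatum of $(\Gbf,X;X^+)$, and it defines a Shimura subvariety $M''_g := \wp_\Gamma(gX'^+) \subset M$, a Hecke translate of $M'$ in the sense of Definition~\ref{Hecke translation}. Since $gX'^+ \subseteq X^+$ we automatically have $M''_g \subseteq M \subseteq \Tcal_g$, so it suffices to produce some $g$ with $M''_g \cap U \neq \emptyset$.

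Next I would establish the density claim: the union $\bigcup_{g \in \Gbf^\der(\Qbb)^+} gX'^+$ is analytically dense in $X^+$. By the real approximation theorem applied to the connected semisimple $\Qbb$-group $\Gbf^\der$, the subgroup $\Gbf^\der(\Qbb)$ is dense in $\Gbf^\der(\Rbb)$; intersecting with the open subgroup $\Gbf^\der(\Rbb)^+$ yields that $\Gbf^\der(\Qbb)^+$ is dense in $\Gbf^\der(\Rbb)^+$. Since $\Gbf^\der(\Rbb)^+$ acts transitively on the Hermitian symmetric domain $X^+$, the orbit $\Gbf^\der(\Qbb)^+ \cdot x'$ is analytically dense in $X^+$ for any $x' \in X'^+$, which gives density of the union.

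Finally, the uniformization $\wp_\Gamma : X^+ \to M$ is a continuous surjection for the analytic topologies, so applying it to the dense set above shows that $\bigcup_{g \in \Gbf^\der(\Qbb)^+} M''_g$ is analytically dense in $M$. Since $U$ is a non-empty analytically open subset of $M$, it must meet this dense union, hence there exists $g \in \Gbf^\der(\Qbb)^+$ with $M''_g \cap \Tcal_g^\circ \neq \emptyset$, which is precisely the conclusion. The main substantive point is the density assertion in the previous paragraph, but this is a direct consequence of real approximation together with the transitivity of $\Gbf^\der(\Rbb)^+$ on $X^+$; no deeper input (strong approximation at finite places, equidistribution of Hecke orbits, etc.) is required, making the proof essentially soft.
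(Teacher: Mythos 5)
Your proof is correct and takes essentially the same approach as the paper's: both reduce to real approximation ($\Gbf^\der(\Qbb)^+$ dense in $\Gbf^\der(\Rbb)^+$), use transitivity of $\Gbf^\der(\Rbb)^+$ on $X^+$ to conclude the union of Hecke translates is dense in $M$, and then observe that the non-empty open set $M\cap\Tcal_g^\circ$ must meet this dense union. You spell out the density step (via continuity of $\wp_\Gamma$) a bit more explicitly than the paper does, but the underlying argument is identical.
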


\begin{proof}
The starting point is the theorem of real approaximation, cf. \cite[Theorem 5.4]{milne05}: if $\Hbf$ is a linear $\Qbb$-group, then the subset $\Hbf(\Qbb)\subset\Hbf(\Rbb)$ is dense for the archimedean topology on the Lie group $\Hbf(\Rbb)$, and $\Hbf^\der(\Qbb)^+$ is dense in $\Hbf^\der(\Rbb)^+$.  Hence for $M'\subset M$ defined by $(\Gbf',X';X'^+)$, the union of Shimura subvarieties $\bigcup_{q\in\Gbf^\der(\Qbb)^+}\wp_\Gamma(qX'^+)$ defined by $(q\Gbf'q^\inv,qX';qX'^+)$ is dense in $M$, and it equals the total Hecke orbit of $M'$ in $M$, because only elements in $\Gbf^\der(\Qbb)^+$ could contribute non-trivially to the Hecke correspondences. Now that $M\subset\Tcal_g$ meets $\Tcal_g^\circ$ non-trivially and $\Tcal_g^\circ$ is open and Zariski dense in $\Tcal_g$, there is some $q\in\Gbf^\der(\Qbb)^+$ such that $\wp_\Gamma(qX'^+)$ is contained generically in $\Tcal_g$.
\end{proof}

\subsection{Higgs bundles on Shimura varieties}\label{subsection Higgs bundles on Shimura varieties}
We recall a few facts about Higgs bundles associated to variations of Hodge structures and the Simpson correspondence.
We will focus on the case over a connected Shimura variety of Hodge type endowed with a suitable smooth compactification.
Note that we follow \cite{deligne pspm} and \cite{milne05} for the convention on Hodge types, where our $H^{p,q}$ is recognized as $H^{-p,-q}$ in complex geometry.

Let $(\Gbf,X;X^+)$ be a connected Shimura datum. From \cite{deligne pspm} we know that if   $\rho:\Gbf\ra \GL_V$ is an algebraic representation of a single rational weight $n$,
i.e., the composition $\Gbb_{\mrm,\Rbb}\ot{w}\ra\Sbb\ot{x}\ra\Gbf_\Rbb\ot{\rho}\ra\GL_{V,\Rbb}$ is the central cocharacter $t\mapsto t^{-n}\id_{V_\Rbb}$ defined over $\Qbb$,
then the constant sheaf on $X^+$ of stalk $V$ underlies a $\Qbb$-PVHS (polarized variation of rational Hodge structures) of weight $n$, denoted as $\Vbb$.
If we take $V_\Zbb$ an integral structure of $V$ and $\Gamma\subset\Gbf^\der(\Qbb)_+$ a torsion free congruence subgroup stabilizing $V_\Zbb$,
then the $\Zbb$-PVHS $\Vbb_\Zbb$ on $X^+$ with stalk $V_\Zbb$ descends to a $\Zbb$-PVHS $\Vcal$ on the connected Shimura variety $M=\Gamma\bsh X^+$.
Note that $\Gamma$ serves as the topological fundamental group of $M$. Here  $M$ is not necessarily proper, but if we have chosen $\Gamma$ to be small enough,
then $M$ admits a smooth toroidal compactification $\Mbar$ by joining finitely many divisors,
and the monodromy of $\Gamma\ra\GL_\Qbb(V)$ along each  irreducible component of the boundary $\partial M:=\Mbar- M$ is unipotent.
If the $\Qbb$-PVHS $\Vbb$ consists of a filtered flat connection $(\Vcal,\nabla,Fil^\bullet)$,
then it extends uniquely to a filtered flat connection on $\Mbar$ with logarithmic poles along $\partial M$, which we still denote by $\Vbb=(\Vcal,\nabla,Fil^\bullet)$.

For the $\Qbb$-PVHS $\Vbb$ given above from the representation $(V,\rho)$,
we have the Higgs bundle $(\Ecal,\theta)$ on $M$ by taking the graded quotient of the filtered flat connection
$\Vbb$: $\Ecal=\oplus_r\Ecal^r$ with $\Ecal^r=Fil^r/Fil^{r+1}$ and the Higgs field $\theta:\Ecal^r\ra \Ecal^{r-1}\otimes_{\Ocal_M}\Omega^1_M$ is induced by the filtered flat connection.
Similarly, on the compactification $\Mbar$, the flat connection with logarithmic poles $\nabla:\Vcal\ra\Vcal\otimes_{\Ocal_{\Mbar}}\Omega^1_\Mbar(\partial M)$
together with its filtration $Fil^\bullet$ gives rise to the logarithmic Higgs bundle   on $\Mbar$, which we still write as $(\Ecal=\oplus_r\Ecal^r,\theta)$.

When $(\Gbf,X;X^+)=(\GSp_V,\Hscr_V^\pm;\Hscr_V^+)$ and $\Gamma=\Gamma(\ell)\subset\Sp_V(\Qbb)$ a principal congruence subgroup,
we will be mainly interested in the $\Qbb$-PVHS $\Vbb$ and the Higgs bundle $(\Ecal,\theta)$ on $M=\Acal_V$ given by the standard representation $\rho:\GSp_V\mono\GL_V$.
In this case, write $\pi:\Xcal\ra \Acal_V$ for the universal abelian scheme over $M$ using the moduli interpretation of $\Acal_V$,
we see that the underlying local system of $\Vbb$ is $R^1\pi_*\Qbb_\Xcal$, and the Hodge filtration comes from the canonical exact sequence
$$0\ra \pi_*\Omega^1_{\Xcal/\Acal_V}\ra\Vcal\ra R^1\pi_*\Ocal_\Xcal\ra 0,$$ with $Fil^\inv=\Vcal$ and $Fil^0=\pi_*\Omega^1_{\Xcal/\Acal_V}$.
Hence the associated Higgs bundle   is   $\Ecal=\Ecal^\minuszero\oplus\Ecal^\zerominus$ with
$\Ecal^\minuszero=\Ecal^0=Fil^0=\pi_*\Omega^1_{\Xcal/\Acal_V}$ and $\Ecal^\zerominus=\Ecal^{-1}=Fil^\inv/Fil^0=R^1\pi_*\Ocal_\Xcal$,
while the Higgs field $\theta:\Ecal^{0,-1}\ra\Ecal^{-1,0}\otimes_{\Ocal_{\Acal_V}}\Omega^1_{\Acal_V}$ is equal to the edge morphism of the tautological exact sequence
$$0\ra \pi^*\Omega^1_{\Acal_V}\ra\Omega^1_\Xcal\ra\Omega^1_{\Xcal/\Acal_V}\ra0.$$
The extension of these structures  to a smooth toroidal compactification of $\Acal_V$ is similar, and we do not repeat the details.

From the Simpson correspondence we know that the Higgs bundle $(\Ecal,\theta)$ above on $\Acal_V$ is irreducible,
because it corresponds to the irreducible $\Cbb$-representation of the fundamental group $\Gamma\ra\GL_\Cbb(V_\Cbb)$,
induced by the absolutely irreducible algebraic representation $\Sp_V\ra\GL_V$.
When we consider a Shimura subvariety $M$ defined by $(\Gbf,X;X^+)\mono(\GSp_V,\Hscr_V^\pm;\Hscr_V^+)$, with $M\isom\Gamma_M\bsh X^+$
for $\Gamma_M$ a torsion-free congruence subgroup in $\Gbf^\der(\Qbb)^+$, the $\Cbb$-representation $\Gamma_M\mono\Gbf^\der(\Rbb)^+\ra\GL_\Cbb(V_\Cbb)$ is no longer irreducible in general,
and we can decompose the representation of $\Gamma_M$ to obtain the decomposition of Higgs bundles.
In this case, the ambient representation of $\Gamma_M$ on $V_\Cbb$ corresponds to the Higgs bundle $\Ecal_M$ which is the pull-back of $\Ecal$ along $M\mono\Acal_V$,
and we have an decomposition $\Ecal_M=\Fcal_M\oplus\Ucal_M$, where $\Ucal_M$ is the maximal unitary Higgs subbundle of $\Ecal_M$,
i.e., the Higgs subbundle corresponding to the maximal  subrepresentation of $\Gamma_M\ra\GL_\Cbb(V_\Cbb)$ on which $\Gamma_M$ acts through a compact unitary group.
Usually we assume for simplicity that the level $\ell$ is chosen to be large enough so that the closure
$\Mbar$ of $M$ in a given smooth compactification $\Acalbar_V$ of $\Acal_V$ remains a smooth compactification by joining finitely many divisors,
and the same holds for a fixed Shimura curve $C$ in $M$. In this case the decomposition of Higgs bundles, the maximal unitary Higgs subbundles, etc. extend to $\Mbar$ and $\Cbar$.

We call $\Ecal_M$ resp. $\Ecal_\Mbar$ the canonical Higgs bundle on $M$ resp. on $\Mbar$ given by the embedding $M\mono\Acal_V$ resp. $\Mbar\mono\Acalbar_V$.
Note that the Higgs bundles involved are given by $\Qbb$-PVHS, hence
$$\rank\Ecal^{\minuszero}=\Ecal^\zerominus=\frac{\rank\Ecal}{2},$$
and the same holds for the summands in the decompositions $\Ecal_M=\Fcal_M\oplus\Ucal_M$ and $\Ecal_C=\Fcal_C\oplus\Ucal_C$, as well as their extensions to smooth compactifictions.

\begin{remark}[(Representations of fundamental groups)]\label{representations of fundamental groups}
In this paper, we make use of numerical properties of the decomposition $\Ecal_M=\Fcal_M\oplus\Ucal_M$ as well as its compactified form.
What matters is the rank of $\Ucal_M$, which corresponds to the maximal subrepresentation of $V_\Cbb$ on which $\pi_1(M)$ acts through a compact unitary group.
Note that the representation $\Gamma\mono\GL_\Cbb(V_\Cbb)$ is restricted from $\Gbf^\der(\Rbb)\mono\Sp_V(\Rbb)\mono\GL_\Cbb(V_\Cbb)$,
hence is determined by the algebraic representation $\Gbf^\der\mono\Sp_V$ because $\Gamma$ is Zariski dense in $\Gbf^\der(\Qbb)$.
Similarly, when we study the restriction of the Higgs bundles to a special subvariety $M'=\wp_\Gamma(X'^+)$,
what matters is the representation $\Gamma'\mono\GL_\Cbb(V_\Cbb)$ which factors through $\Gamma'\mono\Gbf'^\der(\Rbb)$ where $M'$ is defined by $(\Gbf',X';X'^+)$:
although $\Gamma'$ is not necessarily equal to $\pi_1(M')$, by Lemma \ref{uniformization of Shimura subvarieties} we know that $\Gamma'$ is a subgroup of finite index in $\pi_1(M')$,
hence to determine the rank of the maximal unitary Higgs subbundle it suffices to study the unitary subrepresentation of $\Gamma'$ on $V_\Cbb$,
determined by the algebraic representation $\Gbf'^\der\ra\Sp_V$.
This is the principle behind the computations concerning symplectic representations in Sections \ref{section Symplectic representations in the unitary case} and \ref{section Symplectic representations in the orthogonal case}.
\end{remark}

\section{Xiao's technique for curves}\label{section Xiao's technique for curves}
In this section we prove Theorem \ref{exclusion of Shimura curves} using the technique of Xiao in \cite{xiao87}.

\subsection{Harder-Narasimhan filtration of a locally free sheaf}\label{subsection Harder-Narasimhan filtration of a locally free sheaf}

In the subsection, we recall the Harder-Narasimhan filtration for a locally free sheaf over an algebraic complete curve.

Let $\Bbar$ be a smooth projective curve over $\Cbb$, and $\Ecal$ a (non-zero) locally free sheaf over $\Bbar$.  The slope of $\cale$ is defined to be the rational number
$$\mu(\Ecal)=\frac{\deg \Ecal}{\rank \Ecal}.$$ $\Ecal$ is said to be stable (resp. semi-stable), if for any coherent subsheaf $0\neq\Ecal'\subsetneq\Ecal$ we have $\mu(\Ecal')<\mu(\Ecal)$ (resp. $\mu(\Ecal')\leq\mu(\Ecal)$);
it is said to be positive (resp. semi-positive), if for any quotient sheaf  $\cale \twoheadrightarrow \calq \neq 0$, one has $\deg \calq>0$ (resp. $\deg \calq\geq0$);
it is said to be poly-stable if it is a direct sum of stable locally free subsheaves of the same slope.
It is clear that any poly-stable locally free sheaf is semi-stable.

 It is well-known (cf. \cite{hardernarasihan745}) that $\cale$ has a unique filtration
 \begin{equation}\label{eqnharder-nara}
 0=\cale_0\subset \cale_1 \subset \cdots \subset \cale_n=\cale,
 \end{equation}
 such that:
 \begin{enumerate}
 \item[(i).] the quotient $\cale_i/\cale_{i-1}$ is a locally free semi-stable sheaf for each $i$;
 \item[(ii).] the slopes are strictly decreasing $\mu(\cale_1/\cale_{0})>\mu(\cale_2/\cale_1)>\cdots>\mu(\cale_n/\cale_{n-1}).$
 \end{enumerate}
 Such a filtration is called the Harder-Narasimhan   filtration of $\cale$. The slope $\mu(\cale_n/\cale_{n-1})$ is called the final slope of $\cale$,  and is denoted by $\mu_f(\cale)$.

 Using the Harder-Narasimhan filtration, one sees easily that $\cale$ is positive (resp. semi-positive, resp. semi-stable),
 if and only if $\mu_f(\cale)>0$ (resp. $\mu_f(\cale)\geq 0$,
 resp. $\mu(\cale)=\mu_f(\cale)$).

 The Harder-Narasimhan filtration is functorial, in the sense that for any finite cover $\varphi:\,\wt B \to \olb$ between two algebraic curves, the pull-back of the Harder-Narasimhan filtration of $\cale$ over $\olb$ to $\wt B$ coincides with the Harder-Narasimhan filtration of $\varphi^*\cale$. Hence the property of positivity (resp. semi-positivity, resp. semi-stability)
 is persevered under any finite cover.

 \subsection{Xiao's technique}\label{subsection Xiao's technique}
 In the subsection, we assume that  $\bar f:\,\ol S \to \ol B$ is a relative curve of genus $g\geq 2$, which is not isotrivial,  and $D$ is a divisor on $\ols$ such that $\cale=f_*\calo_{\ols}(D)$ is a locally free sheaf on $\olb$.

 \begin{definition}[(\cite{xiao87})]\label{defofN(F)}
 Let $\Ecal'$ be a locally free subsheaf of $\cale$.  We define the fixed and moving parts of $\Ecal'$, denoted by $Z(\Ecal')$ and $M(\Ecal')$ respectively, as follows.
 Let $\call$ be a sufficiently ample line bundle on $\olb$ such that the sheaf $\Ecal'\otimes\call$ is generated by its global sections, and $\Lambda\subseteq |\calo_{\ols}(D)\otimes f^*\call|$ be the linear subsystem corresponding to sections in $H^0(\olb,\,\Ecal'\otimes\call)$. Then define $Z(\Ecal')$ to be the fixed part of $\Lambda$, and $M(\Ecal')=D-Z(\Ecal')$.

 Note that the definition above does not depend on the choice of $\call$, and $Z(\Ecal')$ is always effective or zero. We also define $N(\Ecal')=M(\Ecal')-\mu_f(\Ecal')F$, where $F$ is a general fibre of $\bar f$.
 \end{definition}

 An important observation of Xiao is the following, whose proof we refer to \cite[Lemma\,3]{xiao87}.
 \begin{lemma}\label{lemmaN(F)nef}
 For any locally free subsheaf $\Ecal'$ of $\cale$, $N(\Ecal')$ is a nef $\Qbb$-divisor, i.e., $E\cdot N(\Ecal')\geq 0$ for any effective divisor $E$ on $\ols$.
 \end{lemma}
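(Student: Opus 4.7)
The plan is to verify nefness directly by checking $E \cdot N(\Ecal') \geq 0$ for every irreducible curve $E \subset \ols$. The argument splits naturally according to whether $E$ is \emph{vertical} (contained in a fiber of $\bar f$) or \emph{horizontal} (surjecting onto $\olb$).

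The vertical case is immediate from the construction. If $E \cdot F = 0$ then $E \cdot N(\Ecal') = E \cdot M(\Ecal')$, and since $M(\Ecal')$ is by definition the moving part of a linear system it carries no fixed components. Hence $E \cdot M(\Ecal') \geq 0$, unless $E$ is itself a base component of the system, in which case the inequality holds trivially.

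For the substantive horizontal case I would exploit the Harder-Narasimhan filtration $0 = \Ecal'_0 \subsetneq \cdots \subsetneq \Ecal'_n = \Ecal'$ of $\Ecal'$, whose graded quotients are semi-stable of strictly decreasing slopes $\mu_1 > \cdots > \mu_n = \mu_f(\Ecal')$. Let $r_i = \rank \Ecal'_i$, $M_i = M(\Ecal'_i)$, and $A_i = M_i - M_{i-1}$; the chain $M_0 = 0 \leq M_1 \leq \cdots \leq M_n$ is increasing in the effective cone, since enlarging $\Ecal'_i$ can only shrink the fixed part, so each $A_i$ is effective, and $N(\Ecal') = \sum_i A_i - \mu_n F$. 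The idea is to redistribute the single negative contribution $-\mu_n F$ among the $A_i$'s, rewriting $N(\Ecal') = \sum_i (A_i - c_i F)$ with non-negative coefficients $c_i$ summing to $\mu_n$, and then verifying non-negative intersection with $E$ summand by summand. The key numerical input is a bound of the shape $E \cdot A_i \geq (r_i - r_{i-1})\, \mu_i \cdot (E \cdot F)$, which would follow from the semi-stability of the graded quotient $\Ecal'_i / \Ecal'_{i-1}$, together with the fact that semi-stability is preserved under pull-back along the finite cover $E \to \olb$. The strict decrease $\mu_i > \mu_{i+1}$ then makes the telescoped coefficients come out non-negative.

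The main obstacle lies precisely in this last step of the horizontal case: extracting the lower bound on $E \cdot A_i$ from the semi-stability of the graded quotients. One must translate numerical slope data of sheaves on $\olb$ into geometric intersection numbers on $\ols$, mediated by the moving linear system attached to $\Ecal'_i$ and the finite morphism $E \to \olb$; doing this carefully requires controlling how sections vanishing on $E$ arise from subsheaves of $\Ecal'_i$ whose slope is constrained by the filtration. This passage is the technical heart of Xiao's original argument and is carried out in detail in \cite{xiao87}.
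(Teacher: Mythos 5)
The paper itself gives no proof of this lemma; it refers directly to \cite[Lemma~3]{xiao87}, so your sketch has to be measured against Xiao's actual argument. Your vertical case is correct, and so is your observation that the moving parts $M_0 \leq M_1 \leq \cdots \leq M_n$ increase in the effective cone. The gap is in the horizontal case: the ``key numerical input'' you posit, $E\cdot A_i \geq (r_i-r_{i-1})\,\mu_i\,(E\cdot F)$, is not what semi-stability of $\Ecal'_i/\Ecal'_{i-1}$ gives, and it cannot be true. Already in the base case $n=1$ (i.e.\ $\Ecal'$ semi-stable), your bound reads $E\cdot M(\Ecal') \geq (\deg\Ecal')\,(E\cdot F)$, which is stronger than the conclusion of the lemma itself by a factor of $\rank\Ecal'$ --- the lemma only asserts $E\cdot M(\Ecal')\geq \mu_f(\Ecal')\,(E\cdot F)$. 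For a horizontal section $E$ of small $-E^2$ one has $E\cdot M(\Ecal')\leq E\cdot\omega_{\ols/\olb}=-E^2$, which need not dominate $\deg\Ecal'$. More fundamentally, the Harder--Narasimhan filtration of $\Ecal'$ plays no role in this lemma; Xiao uses that filtration only in the \emph{next} lemma, after nefness of each $N(\cale_i)$ has already been established, and telescoping is counterproductive here since a lower bound on $E\cdot M_n$ cannot be assembled from lower bounds on the differences $E\cdot(M_i-M_{i-1})$.

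Xiao's actual proof is a single application of the quotient-slope bound, with no vertical/horizontal split. The evaluation map $f^*(\Ecal'\otimes\call)\to\calo_{\ols}(D+f^*\call)$ factors through $\calo_{\ols}(M(\Ecal')+f^*\call)$, and because the moving system $\Lambda-Z(\Ecal')$ has no fixed components, this map is nonzero along every irreducible curve $E$. Pull back to the normalization $\nu:\widetilde E\to E$ and set $\sigma=f\circ\nu:\widetilde E\to\olb$, a finite morphism of degree $b=E\cdot F$ when $E$ is horizontal. One obtains a nonzero map $\sigma^*(\Ecal'\otimes\call)\to\nu^*\calo(M(\Ecal')+f^*\call)$, whose image is a rank-one quotient of $\sigma^*(\Ecal'\otimes\call)$ of degree at most $E\cdot(M(\Ecal')+f^*\call)$. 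By the functoriality of the Harder--Narasimhan filtration that you record in \S3.1, the final slope of $\sigma^*(\Ecal'\otimes\call)$ equals $b\,(\mu_f(\Ecal')+\deg\call)$, and every nonzero quotient of a bundle on a curve has slope at least the final HN slope. Combining gives $E\cdot M(\Ecal')\geq \mu_f(\Ecal')\,(E\cdot F)$, i.e.\ $E\cdot N(\Ecal')\geq 0$. That one restriction-to-$E$ step --- not a decomposition along the filtration of $\Ecal'$ --- is the missing idea.
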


 \subsection{Bound of the unitary part}\label{subsection Bound of the unitary part}
 In the subsection, we consider the unitary part of a relative curve.

 Let $\fbar:\Sbar\ra\Bbar$ be a relative curve of genus $g\geq 2$, which is not isotrivial. Let $\omega_{\ol S/\ol B}$ be the relative canonical sheaf. Then $\fosb$ is a locally free sheaf over $\olb$ of rank $g$. It is well-known (cf. \cite{fujita78}) that $\fosb$ is semi-positive.
Moreover, one has the following decomposition (cf. \cite{fujita78}) $$\bar f_*\omega_{\ol S/\ol B}= \Fcal_\Bbar^\minuszero\oplus\Ucal_\Bbar^\minuszero,$$
where $\Fcal_\Bbar^\minuszero$ is an ample vector bundle over $\ol B$, and $\Ucal_{\olb}^\minuszero$ is a unitary vector bundle,
i.e., a vector bundle corresponds to a unitary representation of the fundamental group $\pi_1(\ol B)$.
 Note that in \cite{fujita78} the superscript is $(1,0)$, and we modify it into $(-1,0)$ following our notations on the Hodge type in the $\Qbb$-PVHS $R^1f_*\Qbb_{S}$,
where $f:\,S \to B$ is the smooth part of $\bar f$.

  The next lemma is essentially due to Xiao (cf. \cite{xiao87}).
 \begin{lemma}\label{lemmaxiaoorignal}
 Conditions as above. Then
 \begin{equation}\label{eqnflatpart1}
 \rank \Ucal_{\olb}^\minuszero \leq \frac{5g+1}{6}.
 \end{equation}
 \end{lemma}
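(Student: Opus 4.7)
The plan is to adapt Xiao's original argument from \cite{xiao87}, which bounds the rank of the maximal flat sub-bundle of $\fbar_*\omega_{\Sbar/\Bbar}$ for a non-isotrivial semistable fibration; the only property of $\Ucal_\Bbar^{-1,0}$ that will enter is its polystability of slope zero, which is automatic for any unitary bundle. The starting observation is that the Harder--Narasimhan filtration of $\Ecal := \fbar_*\omega_{\Sbar/\Bbar}$ is adapted to the decomposition $\Ecal = \Fcal_\Bbar^{-1,0} \oplus \Ucal_\Bbar^{-1,0}$: every HN slope of the ample bundle $\Fcal_\Bbar^{-1,0}$ is strictly positive, while $\Ucal_\Bbar^{-1,0}$ is semistable of slope zero. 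Consequently the HN filtration of $\Ecal$ takes the form
\[
0 = \Ecal_0 \subset \Ecal_1 \subset \cdots \subset \Ecal_{n-1} = \Fcal_\Bbar^{-1,0} \subset \Ecal_n = \Ecal,
\]
with slopes $\mu_1 > \cdots > \mu_{n-1} > \mu_n = 0$ and with the rank jump at the last step equal to $u := \rank \Ucal_\Bbar^{-1,0}$.

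Next I would carry out Xiao's intersection-theoretic estimate on this filtration. For each $i < n$, form the moving part $M(\Ecal_i)$ and the nef $\Qbb$-divisor $N_i := M(\Ecal_i) - \mu_i F$ provided by Lemma \ref{lemmaN(F)nef}; these satisfy $N_i^2 \geq 0$ and the fibrewise bound $M(\Ecal_i) \cdot F \leq 2r_i - 2$, where $r_i := \rank\Ecal_i$. Telescoping $K_{\Sbar/\Bbar}$ along the filtration as in \cite{xiao87} and combining nef self-intersections via the Hodge index theorem produces a lower bound of the shape
\[
K_{\Sbar/\Bbar}^2 \;\geq\; \sum_{i=1}^{n-1}(\mu_i - \mu_{i+1})\cdot c_i(g, r_i),
\]
with explicit positive coefficients $c_i$ depending only on $g$ and $r_i$; the non-isotriviality of $\fbar$ guarantees that at least one $\mu_i - \mu_{i+1}$ is strictly positive. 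Pairing this with the upper bound coming from Noether's formula on the semistable model, $K_{\Sbar/\Bbar}^2 \leq 12 \deg \Ecal = 12\sum_{i=1}^{n-1}(\mu_i - \mu_{i+1})r_i$, yields a linear constraint on the weighted differences $(\mu_i - \mu_{i+1})r_i$ which every admissible HN configuration must satisfy.

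Maximising $u = g - r_{n-1}$ subject to that constraint then reduces, exactly as in Xiao's argument, to the extremal case of a single positive-slope step at $i = n-1$; the comparison between $c_{n-1}(g, g-u)$ and $12(g-u)$ then yields the sharp bound $u \leq (5g+1)/6$. The main obstacle is the careful book-keeping of the coefficients $c_i(g,r_i)$ in Xiao's telescoping computation, because the constant $5/6$ is the output of a rather tight quadratic inequality between the $r_i$'s and is sensitive to how the nef intersections are combined. Once these coefficients are made explicit, the remaining optimisation is an elementary convexity argument and the proof concludes.
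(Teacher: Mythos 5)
The paper's proof is far simpler than what you outline: it uses only the top piece $\Ecal_1$ of the Harder--Narasimhan filtration and requires no telescoping, no Clifford theorem, and no Hodge index theorem. Writing $\omega_{\Sbar/\Bbar} = Z(\Ecal_1) + N(\Ecal_1) + \mu(\Ecal_1)F$ via Definition~\ref{defofN(F)}, and using that $\omega_{\Sbar/\Bbar}$ is nef, $Z(\Ecal_1)$ is effective, and $N(\Ecal_1)$ is nef by Lemma~\ref{lemmaN(F)nef}, one gets
\[
12\deg\fosb \;\geq\; \omega_{\Sbar/\Bbar}^2 \;\geq\; \omega_{\Sbar/\Bbar}\cdot\mu(\Ecal_1)F \;=\; (2g-2)\mu(\Ecal_1).
\]
Together with the Harder--Narasimhan maximality $\mu(\Ecal_1)\geq\mu(\Fcal_\Bbar^{-1,0})=\deg\fosb/(g-\rank\Ucal_\Bbar^{-1,0})$ and $\deg\fosb>0$ (non-isotriviality), this gives $12(g-\rank\Ucal_\Bbar^{-1,0})\geq 2g-2$, which is the bound.

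Your proposal invokes the full Xiao telescoping estimate together with Clifford's theorem and the Hodge index theorem, but leaves the crucial step---computing the coefficients $c_i(g,r_i)$ and carrying out the optimization over admissible HN profiles---undone, and you flag that very step as ``the main obstacle''; without it the argument is not complete. Two further problems: Clifford provides a \emph{lower} bound $M(\Ecal_i)\cdot F \geq 2(r_i-1)$ for a special linear system of projective dimension $r_i-1$, not the upper bound $\leq 2r_i-2$ you state; and your claimed extremal configuration (a single positive-slope step, i.e., $\Fcal_\Bbar^{-1,0}$ semi-stable) is precisely the hypothesis of Lemma~\ref{bound of the semi-stable part}, where the Clifford-based refinement gives the \emph{stronger} bound $<(4g+2)/5$, so the assertion that the extremal analysis yields exactly $(5g+1)/6$ is not correct. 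The constant $5/6$ in the present lemma comes only from the crude coefficient $\omega_{\Sbar/\Bbar}\cdot F = 2g-2$, not from any optimization over the HN profile; the refined optimization you envision belongs to Lemma~\ref{bound of the semi-stable part}, not to this one.
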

 \begin{proof}
 Let $\cale=\bar f_*\omega_{\ol S/\ol B}$, and consider the Harder-Narasimhan filtration of $\cale$ as in \eqref{eqnharder-nara}. By Definition \ref{defofN(F)},  we have the following decomposition
 \begin{equation}\label{eqnpfflatpart12}
 \omega_{\ol S/\ol B}=Z(\cale_1)+N(\cale_1)+\mu(\cale_1)F.
 \end{equation}
 Hence
 \begin{equation}\label{eqnpfflatpart11}
 12 \deg \fosb \geq \omega_{\ol S/\ol B}^2 \geq \omega_{\ol S/\ol B}\cdot \mu(\cale_1)F=(2g-2)\mu(\cale_1).
 \end{equation}
 By the definition of Harder-Narasimhan filtration, $$\mu(\cale_1)\geq \mu(\Fcal_\Bbar^\minuszero)=\frac{\deg \fosb}{g-\rank \Ucal_{\olb}^\minuszero}.$$ So \eqref{eqnflatpart1} follows immediately from \eqref{eqnpfflatpart11}.
 \end{proof}

The next lemma is a refined upper bound of the rank of the unitary part  if the ample part $\Fcal_\Bbar^\minuszero$ is semi-stable.
 \begin{lemma}[(Bound of the unitary part)]\label{bound of the semi-stable part}
 If $\Fcal_{\olb}^{-1,0}$ is semi-stable, then
 \begin{equation}\label{eqnflatpart2}
 \rank \Ucal_{\olb}^{-1,0} < \frac{4g+2}{5}, \mathrm{\quad and\quad } \rank\Fcal_\Bbar^\minuszero > \frac{g-2}{5}.
 \end{equation}
 \end{lemma}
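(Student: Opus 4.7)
The plan is to refine Xiao's argument in Lemma \ref{lemmaxiaoorignal} by extracting an additional positive contribution to $\omega_{\ol S/\ol B}^2$ that becomes available under the semi-stability hypothesis on $\Fcal_\Bbar^\minuszero$. The key observation is that with this hypothesis the Harder--Narasimhan filtration of $\cale=\fosb$ has exactly two steps
\[
0\subset \cale_1=\Fcal_\Bbar^\minuszero \subset \cale_2=\cale,
\]
since $\Ucal_\Bbar^\minuszero$ is a unitary, hence semi-stable, quotient of slope $0$. Writing $r_1:=\rank\Fcal_\Bbar^\minuszero=g-\rank\Ucal_\Bbar^\minuszero$, we obtain the sharp relation $\mu_1=\mu(\cale_1)=(\deg\cale)/r_1$ exactly, rather than the mere inequality used in Lemma \ref{lemmaxiaoorignal}.

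I would then apply Xiao's decomposition of Definition \ref{defofN(F)} to $\cale_1$: write $\omega_{\ol S/\ol B}=Z_1+N_1+\mu_1 F$ with $Z_1=Z(\cale_1)$ effective and $N_1=N(\cale_1)$ nef by Lemma \ref{lemmaN(F)nef}. Using the nefness of $\omega_{\ol S/\ol B}$ on the semi-stable fibration $\bar f$ of genus $g\geq 2$, together with $Z_1\cdot N_1\geq 0$ and $N_1^2\geq 0$, expanding $\omega_{\ol S/\ol B}^2$ against this decomposition gives
\[
\omega_{\ol S/\ol B}^2 \;=\; \omega\cdot Z_1 + \omega\cdot N_1 + \mu_1(2g-2) \;\geq\; \mu_1\,(N_1\cdot F) + \mu_1(2g-2).
\]

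The key new input is a Clifford-type estimate $N_1\cdot F\geq 2r_1-2$ on a general smooth fibre $F$. The restriction $\cale_1|_F$ sits as a subspace of dimension $r_1$ in $H^0(F,\omega_F)$ and cuts out a sub-linear series of $|\omega_F|$ whose base locus $B$ (of degree $b$) contains $Z_1|_F$. Removing $B$ produces a base-point-free sub-series of projective dimension $r_1-1$ contained in $|K_F-B|$, which remains special since $h^1(K_F-B)=h^0(B)\geq 1$; Clifford's theorem applied to the complete $|K_F-B|$ then yields $2g-2-b\geq 2(r_1-1)$, so $b\leq 2g-2r_1$. Since $Z_1\cdot F\leq b$, this forces $N_1\cdot F=(2g-2)-Z_1\cdot F\geq 2r_1-2$, and substitution gives $\omega_{\ol S/\ol B}^2\geq\mu_1(2r_1+2g-4)$.

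The conclusion follows from Mumford's formula $12\deg\cale=\omega_{\ol S/\ol B}^2+\delta_{\bar f}$, where $\delta_{\bar f}\geq 0$ counts the nodes in the singular fibres of $\bar f$. Since $\bar f$ is non-isotrivial and $g\geq 2$, it must admit at least one singular fibre, so $\delta_{\bar f}>0$ and we get the strict inequality $\omega_{\ol S/\ol B}^2<12 r_1\mu_1$. Combining with the preceding bound yields $12 r_1>2r_1+2g-4$, i.e.\ $r_1>(g-2)/5$, which is the second inequality of \eqref{eqnflatpart2}; the first follows immediately from $\rank\Ucal_\Bbar^\minuszero=g-r_1<(4g+2)/5$. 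I expect the main technical obstacle to lie in the Clifford-type step, where one must be careful that the sub-series cut out by $\cale_1|_F$ remains special after the base locus is stripped away, so that Clifford's theorem genuinely applies.
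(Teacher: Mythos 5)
Your reduction to the two-step Harder--Narasimhan filtration $0\subset\Fcal_\Bbar^\minuszero\subset\fosb$ is correct, as is the (non-strict) Clifford estimate $N(\cale_1)\cdot F\geq 2\rank\Fcal_\Bbar^\minuszero-2$ on a general fibre. The gap lies in the final step: your chain of inequalities produces only $\omega_{\ol S/\ol B}^2\geq\mu(\cale_1)\bigl(2\rank\Fcal_\Bbar^\minuszero+2g-4\bigr)$, and you try to upgrade this to a strict bound on $\rank\Fcal_\Bbar^\minuszero$ via Mumford's formula $12\deg\fosb=\omega_{\ol S/\ol B}^2+\delta_{\bar f}$ together with the claim that non-isotriviality and $g\geq 2$ force $\delta_{\bar f}>0$. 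That claim is false: Kodaira fibrations are smooth, non-isotrivial fibrations of fibre genus $g\geq 2$ over a compact base curve of genus $\geq 2$, for which $\delta_{\bar f}=0$ and $12\deg\fosb=\omega_{\ol S/\ol B}^2$. (Only over a rational base does non-isotriviality force degenerate fibres.) For such fibrations your argument yields only $\rank\Fcal_\Bbar^\minuszero\geq(g-2)/5$, not the strict inequality required.

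The paper extracts strictness from a different source: it splits according to whether a general fibre is hyperelliptic. In the non-hyperelliptic case the strict form of Clifford's theorem gives $N(\cale_1)\cdot F>2\bigl(\rank\Fcal_\Bbar^\minuszero-1\bigr)$ (apart from a degenerate boundary case, which is handled by observing $\osb\cdot Z(\cale_1)>0$ there), and that strict inequality propagates through the computation with no appeal to $\delta_{\bar f}$. The hyperelliptic case is then dispatched separately via known bounds on the relative irregularity of hyperelliptic fibrations, which already give $\rank\Ucal_\Bbar^\minuszero\leq(g+1)/2<(4g+2)/5$. To repair your proof you should mirror this case division and sharpen your Clifford step to the strict inequality in the non-hyperelliptic case, rather than relying on positivity of $\delta_{\bar f}$.
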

\begin{proof}
If the general fibre of $f$ is hyperelliptic, then by \cite[Theorem\,4.7]{luzuo14} (see also \cite[Theorem\,A.1]{luzuo13}), we may assume that $\Ucal_{\olb}^{-1,0}$ is trivial after a suitable base change.
Hence according to \cite[Theorem\,1]{xiao92-0} or \cite[Theorem\,1.4]{luzuo13}, we have $\rank \Ucal_{\olb}^{-1,0}=q_\fbar \leq \frac{g+1}{2} < \frac{4g+2}{5}$, where $q_\fbar$ is the relative irregularity of $\fbar$.

Now, we assume that the general fibre of $f$ is non-hyperelliptic.
By assumption, the Harder-Narasimhan filtration of $\cale=\bar f_*\omega_{\ol S/\ol B}$ is of the form $$0=\cale_0\subset \cale_1 \subset \cale_2=\cale.$$ Consider the decomposition as in \eqref{eqnpfflatpart12}.
Let $d_1=N(\cale_1)\cdot F$, and write $Z=Z(\cale_1)$, $N=N(\cale_1)$ and $\mu=\mu(\cale_1)$ for simplicity.
Note that $d_1$ is equal to the degree of a linear system of dimension $g-\rank \Ucal_{\olb}^{-1,0}-1$ on $F$,
and $F$ is non-hyperelliptic.
So by Clifford's theorem, one has
\begin{equation}\label{eqnboundofsemi-stablepart 1}
d_1> 2\left(g-\rank \Ucal_{\olb}^{-1,0}-1\right),\qquad\text{unless $d_1=g-\rank \Ucal_{\olb}^{-1,0}-1=0$}.
\end{equation}
Therefore
 \begin{eqnarray*}
 12 \deg \fosb &\geq&\osb^2\\
 &=& \big(\osb+N\big)\cdot\mu F+\osb\cdot Z+N\cdot \big(N+Z\big)\\
 &\geq& (2g-2+d_1)\mu+\osb\cdot Z\\
 &>& \left(4g-4-2\rank \Ucal_{\olb}^{-1,0}\right)\cdot \mu.
 \end{eqnarray*}
The last inequality follows from \eqref{eqnboundofsemi-stablepart 1} and the fact that $\osb\cdot Z>0$ if $d_1=0$.
Therefore \eqref{eqnflatpart2} follows immediately by noting that $\mu=\frac{\deg \fosb}{g-\rank \Ucal_{\olb}^{-1,0}}$.
\end{proof}

\begin{proof}[Proof of Theorem {\rm \ref{exclusion of Shimura curves}}]
Let $C\subset\Acal_V$ be a   curve, with $V$ a symplectic $\Qbb$-space of dimension $2g$.
Let $\Ecal_\Cbar^\minuszero=\Fcal_\Cbar^\minuszero\oplus\Ucal_\Cbar^\minuszero$
be the decomposition of $(\minuszero)$-part of the Higgs bundle on a smooth compactification $\Cbar$ of $C$,
as the closure of $C$ in some smooth toroidal compactification $\Acalbar_V$ of $\Acal_V$.

If $C$ is contained generically in the Torelli locus $\Tcal_g$, then we have the following commutative diagram
$$\xymatrix{B\ar[r]^{j_B}\ar[d] & C\ar[d]^\cap\\ \Mcal_g\ar[r]^{j^\circ} & \Acal_g}$$
where $B$ is the normalization of the pull-back $(j^\circ)^\inv C$ in $\Mcal_g$.
The morphism $B\ra \Mcal_g$ gives rise to a relative $B$-curve $f:S\ra B$, which is a surface fibred over $B$ by curves of genus $g$.
It admits a compactification into a fibration $\fbar:\Sbar\ra\Bbar$ of semi-stable curves over the smooth compactification $\Bbar$ of $B$,
and the morphism $j_B$ extends naturally to $j_{\ol B}: \Bbar \to \Cbar$.
Let $\omega_{\Sbar/\Bbar}=\omega_{\Sbar}\otimes\fbar^*\omega_\Bbar^{\vee}$ be the relative canonical sheaf of $\bar f$.
Then $\fbar_*\omega_{\Sbar/\Bbar}$ is a locally free sheaf on $\Bbar$ of rank $g$, and it admits a decomposition
$\fbar_*\omega_{\Sbar/\Bbar}=\Fcal^{-1,0}_\Bbar\oplus\Ucal_\Bbar^{-1,0}$.
Note that $\fbar_*\omega_{\Sbar/\Bbar}$ is nothing but the $(-1,0)$-part of the Higgs bundle on $\Bbar$ associated
to the relative semi-stable Jacobian family $jac(\bar f):\,Jac(\Sbar/\Bbar) \to \Bbar$ (cf. \cite[\S\,3]{luzuo14}).
Hence the pull-back by $j_{\Bbar}$ of the decomposition $\Ecal_\Cbar^\minuszero=\Fcal_\Cbar^\minuszero\oplus\Ucal_\Cbar^\minuszero$
coincides with $\fbar_*\omega_{\Sbar/\Bbar}=\Fcal_\Bbar^{-1,0}\oplus\Ucal_\Bbar^\minuszero$,
from which it follows that
$$\rank \Ucal_{\olb}^\minuszero \leq \frac{5g+1}{6},
\quad \text{by Lemma \ref{lemmaxiaoorignal}}.$$
It is a contradiction to our assumption.

When $C$ is a Shimura curve, the summand $\Fcal_{\olc}^{-1,0}$ is poly-stable by Theorem \ref{Arakelov (in)equality},
which implies that $\Fcal_\Bbar^{-1,0}=j_{\Bbar}^*\left(\Fcal_{\olc}^{-1,0}\right)$ is semi-stable.
Hence one obtains the bound $\rank \Ucal_{\olb}^\minuszero < \frac{4g+2}{5}$ according to Lemma \ref{bound of the semi-stable part}, which completes the proof.
\end{proof}
%

\section{Shimura varieties of unitary and orthogonal types}\label{section Shimura varieties of unitary and orthogonal types}

In this section we collect some facts about Shimura varieties of unitary and orthogonal types. The material presented is standard, cf. \cite{kudla orthogonal} \cite{kudla rapoport}.

\subsection{The unitary case}\label{subsection the unitary case}
We start with the construction of unitary groups associated to Hermtian modules.

Let $E/F$ be a quadratic field extension, and $D$ a central simple $E$-algebra, endowed with an involution of second kind $a\mapsto \abar$, i.e., its restriction to the center $E$ gives the non-trivial automorphism of $E$ fixing $F$.
An Hermitian module over $D$ is a right $D$-module $H$ of finite rank endowed with a sesquilinear pairing $(\ ,\ ):H\times H\ra D$ such that $(ua,vb)=\abar(u,v)b$ and $\overline{(u,v)}=(v,u)$ for any $u,v\in H$ and $a,b\in D$.
We always require $(\ ,\ )$ to be non-degenerate, thus $h=\tr_{D/E}\circ(\ ,\ )$ gives a non-degenerate Hermitian form $H\times H\ra E$ over $E/F$.

We thus have the $F$-group $\GU(H/D)$ of unitary similitude, which represents the functor sending an $F$-algebra $R$ to
$$\left\{g\in\Aut_{R\otimes_FE}(R\otimes_FH)~\Bigg|~
\begin{aligned}
&g(ud)=g(u)d,~ \forall\, d\in R\otimes_FD,\mathrm{\ and\ }\\
& g^*g=\nu(g)\in R^\times\subset(R\otimes_FD)^\times,
\end{aligned}\right\},$$
where $g^*$ is the transpose of $g$ \wrt $(\ ,\ )$. The map $g\mapsto g^*g$ gives a surjective $F$-group homomorphism $\nu:\GU(H/D)\ra\Gbb_{\mrm,F}$,
whose kernel is the unitary $F$-group $\Ubf(H/D)$.
The special unitary $F$-group  $\SU(H/D)$ is defined as the derived $F$-subgroup of $\Ubf(H/D)$, and we have the following short exact sequences
$$\begin{aligned}
1&\lra&\Ubf(H/D)~~&\lra&\GU(H/D)&\ot{\nu}\lra& \Gbb_{\mrm,F}\quad&\lra& 1,\\
1&\lra&\SU(H/D)&\lra&\GU(H/D)&\lra&\Res_{E/F}\Gbb_{\mrm,E}&\lra&1.
\end{aligned}$$
Note that
$\Ubf(H/D)$ is an $F$-form of $\GL_N$ and $\SU(H/D)$ is an $F$-form of $\SL_N$ with $N=\dim_EH$.

Assume from now on that $E/F$ is a quadratic CM extension over a totally real field $F$, and write $\tau_1,\cdots,\tau_d$ for the distinct real embeddings.
Then along $\tau_j$ we get the semi-simple $\Rbb$-group $\SU(H_j/D_j)$ isomorphic to $\SU(p_j,q_j)$ because there is no non-trivial division algebra over $\Cbb$. Here $(p_j,q_j)$ is the signature of
$h_j:H_j\times H_j\ra E_j$, and the subscript $j$ indicates the tensor product with $\Rbb$ along $\tau_j$: $H_j=H\otimes_{F,\tau_j}\Rbb \isom \Cbb^N$ and $E_j=E\otimes_{F,\tau_j}\Rbb \isom \Cbb$.
The Hermitian symmetric domain $D(H_j/D_j)$ associated to $\SU(H_j/D_j)$  can be identified with the set of negative definite complex subspaces of dimension
$q_j$ in $H_j$, which is an open subset of the Grassmannian $\Gr_{H_j,q_j}$. We thus get the Shimura datum of unitary type $\left(\Res_{F/\Qbb}\GU(H/D),X\right)$
with  $X=\prod_{j}D(H_j/D_j)$. To embed such a Shimura datum into some Siegel datum,
we prefer to shrink the group to the $\Qbb$-subgroup of rational weights
$$\Gbf=\Gbb_{\mrm,\Qbb}\times_{\Res_{F/\Qbb}\Gbb_{\mrm, F}}\Res_{F/\Qbb}\GU(H/D).$$
Take further $A$ the imaginary part of $h=\tr_{D/E}\circ(\ ,\ )$, i.e., $2\sqrt{\Delta}A(x,y)=2h(x,y)-\tr_{E/F} h(x,y)$
with $\Delta\in F^\times$ some totally negative element such that $E=F(\sqrt{\Delta})$. Then $A$ is an $F$-linear symplectic form on $H$,
and  $\psi=\tr_{F/\Qbb}A$ is a symplectic form over $\Qbb$ on $V=\Res_{D/\Qbb}H$  the $\Qbb$-vector space underlying $H$.
Thus $\Gbf$ preserves $\psi$ up to rational similitude, and $(\Gbf,X)$ is a subdatum of $(\GSp_V,\Hscr_V^\pm)$.

Such Shimura data could contain Shimura subdata of unitary type in the following way.
Assume that $U\subset H$ is a $D$-subspace, such that the restriction of $h$ to $D$ is definite along each $\tau_j$.
Then we have a decomposition $H=U\oplus W$, which is orthogonal \wrt $(\ ,\ )$,
and we have the Levi $F$-subgroup $\GU(U,W/D)$ of $\GU(H/D)$ respecting the decomposition,
whose derived $F$-group is $\SU(U/D)\times\SU(W/D)$. The symmetric domain it gives along $\tau_j$ is $D(W_j/D_j)$,
because the definite Hermitian space $U_j$ gives a compact $\Rbb$-group $\SU(U_j/D_j)$ which does not contribute to the symmetric domain.
Since in the definition of Shimura data, the derived $\Qbb$-group of the reductive $\Qbb$-group is assumed to have no compact $\Qbb$-factors,
we should remove the part $\SU(U/D)$ when constructing the Shimura subdatum.
Therefore the Shimura subdatum in $(\Gbf,X)$ is of the form $(\Gbf^W,X^W)$ with $(\Gbf^W)^\der\isom\Res_{F/\Qbb}\SU(W/D)$ and $X^W\isom\prod_jD(W_j/D_j)$.

We will only work with a special class of Shimura varieties of untary type, which contains Shimura curves using the procedure above involving a  definite subspace, and induces  a simple Hermitian symmetric domain:

\begin{definition}[(Shimura data of $\SU(n,1)$-type)]\label{Shimura data of SU(n,1)-type} 

A Shimura datum of $\SU(n,1)$-type is a Shimura datum of unitary type given by an Hermitian space $h:H\times H\ra E$ over some quadratic CM extension $E/F$ of dimension $n+1$ (with $D=E$),
such that $h$ is of signature $(n,1)$ along $\tau=\tau_1$, and definite along $\tau_2,\cdots,\tau_d$. We thus get the Shimura datum $(\Gbf^H,X^H)$
where $\Gbf^H$ is the extension of $\Res_{F/\Qbb}\Ubf(H/E)$ by $\Gbb_{\mrm,\Qbb}$,
and $X^H=D(H_\tau)$ is a complex ball identified with the open subset of negative definite complex lines in the complex projective space $\Pbb(H_\tau)$.
\end{definition}

Here only the trivial division $E$-algebra $D=E$ is involved, and we will write $\SU(H)$ in place of $\SU(H/E)$ in the sequel.
Using the Gram-Schmidt arguments, under some $E$-basis $e_1,\cdots,e_{n+1}$ we have
$h\left(\sum x_je_j,\sum y_je_j\right)=\sum a_j\xbar_jy_j$ for some $a_1,\cdots,a_{n+1}\in F^\times$,
and along $\tau$ only one of the $a_j$'s is negative. We may thus construct definite subspaces $U$ of dimension $n-1$ in $H$
which is positive along $\tau$, and an orthogonal decomposition $H=U\oplus W$ which gives rise to the Shimura subdatum of unitary type
$(\Gbf^W,X^W)$ in $(\Gbf^H,X^H)$, with $X^H$ of dimension 1, namely $X^H=\Hscr_1^\pm$ is the Poincar\'e double half space.

\begin{remark}\label{exclusion of general division algebra}
If $D$ is a  non-split central division $E$-algebra, then one cannot construct a Shimura curve using the orthogonal complement of a positive definite $D$-subspace.
In fact, if there is such one with Hermitian structure $(\ ,\ ):W\times W\ra D$, then we have $h=\tr_{D/E}\circ(\ ,\ ):W\times W\ra E$ of signature $(1,1)$ along one real place of $F$.
This force $W$ to be of dimension 2 over $E$, but a non-split central division $E$-algebra does not admit any non-trivial $E$-representation of dimension 2:
$D$ is of dimension at least 4 over $E$, and the $E$-dimension of a right $D$-module has to be a multiple of $\dim_E D$.
\end{remark}

Just as  in the general case, the datum $(\Gbf^H,X^H)$ can be realized as a subdatum of $(\GSp_V,\Hscr_V^\pm)$
where $V=\Res_{E/\Qbb}H$ is the $\Qbb$-vector space underlying $H$ endowed with the symplectic
$\Qbb$-form $\psi=\tr_{F/\Qbb}A$, $A$ being the imaginary part of $h$.
We thus encounter the following symplectic representation over $\Qbb$: $\Gbf^\der=\Res_{F/\Qbb}\SU(H)\ra \Sp_V$,
which we call the standard symplectic representation over $\Qbb$ associated to the Hermitian space $H$.

\subsection{The orthogonal case}\label{the orthogonal case}We first recall a few facts about quadratic spaces and their associated Clifford algebras, and we work over a field of characteristic zero for simplicity.

Let $F$ be a field of characteristic zero and let $(H,Q)$ be a quadratic space over $F$, i.e., $H$ is an $N$-dimensional $F$-vector space, and $Q:H\times H\ra F$ is symmetric bilinear form, which is assumed to be non-degenerate for simplicity. We then have the special orthogonal $F$-group $\SO(H)$.

To obtain a simply-connected $F$-group isogeneous to $\SO(H)$, we need the Clifford algebra $C(H)$ of $(H,Q)$.
It is the quotient $F$-algebra $C(H)=\bigoplus_{r\geq 0}H^{\otimes r}/I_Q$, where $I_Q$ is the homogeneous bilateral ideal generated by elements of the form $v\otimes v-Q(v,v)$ with $v$ running through $H$.
$C(H)$ is an $F$-algebra of dimension $2^N$. The ideal $I_Q$ is generated by elements of even degree, and this induces on $C(H)$ a $\Zbb/2$-grading $C(H)=C^+(H)\oplus C^-(H)$, with $C^+(H)$ (resp.
$C^-(H)$) the $F$-subspace generated by the images of elements of even degree (resp. odd degree), both of which are of dimension $2^{N-1}$ over $F$.
Note that when $(H,Q)\isom(H',Q')\oplus(H'',Q'')$ is an orthogonal decomposition of quadratic spaces,
we naturally have $C(H)\isom C(H')\otimes C(H'')$ as $\Zbb/2$-graded tensor products of $\Zbb/2$-graded algebras.

The Clifford algebra carries an involution $a\mapsto a^*$, induced by the involution on $\bigoplus_{r\in\Nbb}H^{\otimes r}$ generated by $v_1\otimes\cdots\otimes v_r\mapsto(-v_r)\otimes\cdots\otimes(-v_1)$.
We thus get the $F$-group of spin similitude $\GSpin(H)$, which sends an $F$-algebra $R$ to $$\left\{g\in\Aut_R(R\otimes_FC^+(H):~g(H\otimes_FR)g^\inv\subset H\otimes_FR,\nu(g)=g*g\in R^\times \right\},$$
where we use the canonical inclusion $H\mono C(H)$ for the expression $g(H\otimes_FR)g^\inv$.
Its derived $F$-group is the spin group $\Spin(H)$, fitting into the exact sequence $$1\lra\Spin(H)\lra\GSpin(H)\ot{\nu}\lra\Gbb_{\mrm, F}\lra 1,$$ and it is also a central extension of $\SO(V)$ by $\mu_2=\{\pm\}$.

Note that over $\Rbb$ a quadratic space is determined by its signature $(n,m)$ i.e., the dimensions of the positive/negative parts, and in this case we also write $\Spin(n,m)$, $\GSpin(n,m)$ as we do with $\SO(n,m)$ etc. When $m=2$ and $n\geq 1$, the symmetric domain $D=D(H)=D(n,2)$ defined by $\Spin(H)=\Spin(n,2)$ is Hermitian: it is identified with the set of the isotropic negative definite complex lines in the complex projective space $\Pbb(H_\Cbb)$, which is an open subset of the $n$-dimensional quadric in $\Pbb(H_\Cbb)$ defined by $Q$.

\begin{definition}[(Shimura data of $\SO(n,2)$-type)]\label{Shimura data of SO(n,2)-type}
By a Shimura datum of $\SO(n,2)$-type we mean a Shimura datum of the form $(\Gbf,X)$ with $\Gbf^\der=\Res_{F/\Qbb}\Spin(H)$   where $(H,Q)$ is a quadratic space of dimension $n+2$ ($n\geq 1$) over a totally real field $F$ of degree $d$ such that \begin{itemize}
\item $(H,Q)$ is of signature $(n,2)$ along one embedding $\tau=\tau_1:F\ra\Rbb$, and definite along the other ones $\tau_2,\cdots,\tau_d$;

\item $X=D(H_\tau)$ is the Hermitian symmetric domain associated to the real quadratic space $H_\tau$ of signature $(n,2)$.
\end{itemize}
This makes sense because $\Gbf^\der(\Rbb)\isom\Spin(n,2)\times(\Spin(n+2,0))^{d-1}$ and the compact part $(\Spin(n+2,0))^{d-1}$ does not contribute to the Hermitian symmetric domain defined by $\Gbf^\der(\Rbb)$.
\end{definition}

Similar to the unitary case we can construct Shimura subdata in $(\Gbf,X)$ of $\SO(n,2)$-type in each codimension. In fact, let $U\subset H$ be an $F$-subspace of dimension $m$ ($0\leq m\leq n-1$), positive definite along $\tau$ under $Q$, and let $H=U\oplus W$ be the orthogonal decomposition \wrt $Q$. Then $(W,Q)$ is a quadratic space over $F$ of signature $(n-m,2)$ along $\tau$, and definite along the other embeddings. This gives us the Shimura subdatum $(\Gbf',D(W_\tau))\subset(\Gbf,D(H_\tau))$, where $D(W_\tau)\subset D(H_\tau)$ is identified with the subset of isotropic negative definite complex lines in $\Pbb(H_\tau\otimes_\Rbb\Cbb)$ orthogonal to $U_\tau\otimes_\Rbb\Cbb$, i.e., contained in $W_\tau\otimes_\Rbb\Cbb$. The construction of $\Gbf'$ is similar to the unitary case we have explained before: it is obtained by removing the normal $\Qbb$-subgroup $\Res_{F/\Qbb}\Spin(U)$ from the Levi $\Qbb$-subgroup corresponding $U\oplus W$, and $\Gbf'^\der\isom\Res_{F/\Qbb}\Spin(W)$. Take $m=n-1$ we get Shimura subdata defining Shimura curves.

\begin{remarks}
(1) Similar to the unitary case, one could start with a central division $F$-algebra $D$ endowed with an involution $a\mapsto \abar$ of first kind, i.e., whose restriction to $F$ is the identity, and consider a non-degenerate Hermitian module $Q:H\times H\ra D$. In this way the composition $q=\tr_{D/F}Q$ is a quadratic form on $H$, and one gets a unitary $F$-group $\SU(H/D)$ which is an $F$-form of a special orthogonal $F$-group. In this setting we can also consider Shimura data with $F$ a totally real field of degree $d$, under the assumption that along each embedding $\tau_j:F\ra \Rbb$ the base change $q_j:H_j\times H_j\ra F_j$ is either definite or of signature $(n,2)$ or $(2,n)$, and the symmetric domain involved is isomorphic a finite product of copies of the one associated to $\SO(n,2)$.

Inspired by Hain's results \cite{hain99} we only consider the case of one single Hermitian symmetric domain of $\SO(n,2)$-type,
hence only one of the real embeddings is assumed to produce a non-compact Lie group $\Spin(n,2)$,
so that we have Shimura subvarieties in each codimension. Moreover, what matters for us is the existence of Shimura curves.
In the setting involving non-split central division $F$-algebra $D$ which is of dimension at least 4,
we cannot repeat the procedure in the split case of cutting out Shimura curves using definite $D$-subspaces in $H$:
by arguments parallel to Remark \ref{exclusion of general division algebra},
the remaining rank 1 $D$-space gives at most a quadratic space of signature $(2,2)$,
and it leads to a Shimura surface which might fail to contain a Shimura curve,
and our theorem in the curve case no longer applies.

(2) In the Definition \ref{Shimura data of SO(n,2)-type} we have used the spin groups instead of the orthogonal groups, because in this paper we only care about Shimura subdata of $(\GSp_V,\Hscr_V^\pm)$ for some symplectic space $V$ over $\Qbb$. The $\Qbb$-group $\Res_{F/\Qbb}\SO(H)$ is not simply-connected, and in order to have faithful symplectic representations we are forced to use its simply-connected covering $\Res_{F/\Qbb}\Spin(H)$.
\end{remarks}

\section{Symplectic representations in the unitary case}\label{section Symplectic representations in the unitary case}
In this section we first recall some facts about symplectic representations and their primary decomposition, following \cite{satake embedding} and \cite{satake classification},
and then we focus on the case of $\SU(n,1)$-type and the restriction to Shimura curves.
Using the restrictions of symplectic representations to Shimura curves together with Theorem \ref{exclusion of Shimura curves},
we prove our main result for the unitary case in Section \ref{proof of the main result in the unitary case}.

\subsection{Real symplectic representations in general}\label{subsection Real symplectic representations}
Let $G$ be a connected non-compact semi-simple Lie group. It is said to be of Hermitian type if the non-compact symmetric space it defines is an Hermitian symmetric domain. For example, if $(V,\psi)$ is a symplectic space over $\Rbb$, then the symplectic group $\Sp(V)$ is of Hermitian type, and the Hermitian symmetric domain is the (connected) Siegel upper half space $\Hscr_V^+\isom\Sp(V)/\Urm(V)$ where $\Urm(V)$ is the unitary group for some positive definite Hermitian form $h$ on $V$, such that the imaginary part of $h$ is equal to $\psi$.

We are interested in the embeddings of Hermitian symmetric domains of the form $i:X^+\mono\Hscr_V^+$,
where $X^+$ is given by some Lie group of Hermitian type $G$, i.e., $X^+=G/K$ for some maximal compact subgroup $K$ of $G$, such that \begin{itemize}
\item $i$ is holomorphic, and $i(X^+)$ is totally geodesic in $\Hscr_V^+$;
\item $i$ is equivariant form some inclusion of Lie groups $\rho:G\mono\Sp(V)$;
\end{itemize}Actually we impose more constraints: let $H_0\in\Lie K$ resp. $H_0'\in\Urm(V)$ be the unique element defining the complex structure on $X^+$ resp. on $\Hscr_V^+$, and we require further\begin{enumerate}
\item[(H1)] $\rho\circ\ad(H_0)=\ad(H'_0)\circ\rho$;\\ or the even stronger condition \item[(H2)] $\rho(H_0)=H'_0$.
\end{enumerate} When we have a second embedding $(i',\rho')$ subject to these conditions, we regard $(i,\rho)$ and $(i',\rho')$ as (k)-equivalent  if for some $k'\in \Urm(V)$ we have $\rho'=\ad(k')\circ\rho$ on the Lie algebras.

The representations completely determine the embeddings, and when the target is $(\Sp(V),\Hscr_V^+)$ for some real symplectic space $V$, they are referred to as real symplectic representations.
In \cite{satake embedding} Satake has obtained the classification of these embeddings and representations up to (k)-equivalence under the stronger condition (H2).
For every such embedding $(i,\rho)$, the representation $\rho:G\ra\Sp(V)$ splits into a sum of irreducible symplectic subrepresentations $\rho=\bigoplus\limits_{j=0,\cdots,t}\rho_j$,
with $\rho_0:G\ra\Sp(V_0)$ a symplectic representation into some symplectic subspace $(V_0,\psi_0)$ of $V$,
and $\rho_j:G\ra\SU(p_j,q_j)$ ($j=1,\cdots,t$) a representation into some special unitary group of some Hermitian space $(V_j,h_j)$ of signature $(p_j,q_j)$,
such that the sum of $\psi_0$ and the imaginary parts $\psi_j$ of $h_j$ gives the symplectic form $\psi$ on $V\isom\oplus_jV_j$ (cf. \cite[\S\,2, Proposition 3]{satake embedding}).
Note that in these components we do allow trivial representations.

In the case of $\SU(n,1)$-type, all the non-trivial real symplectic representations are essentially copies of the wedge representations  cf.\cite[\S\,3.2]{satake embedding}:
\begin{theorem}[(Irreducible real symplectic representations of $\SU(n,1)$)]\label{irreducible real symplectic representations of SU(n,1)}
Let $(H,h)$ be an Hermitian space over $\Cbb/\Rbb$ of signature $(n,1)$, and let $\SU(H)$ be its special unitary group, with $D=D(H)$ the Hermitian symmetric domain defined by $\SU(H)$,
Then the irreducible real symplectic representations are exactly the natural representations $\Lambda_m$ ($m=1,\cdots,n$) of $\SU(V)$
on the complex vector space $\wedge^m_\Cbb V$ {\rm (}viewed as a real vector space of dimension $2\binom{n+1}{m}${\rm )},
where the symplectic forms are the imaginary parts of canonically defined Hermitian forms of signature $\left(\binom{n}{m},\binom{n}{m-1}\right)$ preserved by the action of $\SU(H)$.
\end{theorem}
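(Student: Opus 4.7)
The plan is to invoke Satake's classification of symplectic representations of semi-simple Lie groups of Hermitian type \cite{satake embedding} and specialize it to $G=\SU(H)\isom\SU(n,1)$, whose complexification is $\SL_{n+1}(\Cbb)$. Satake's framework shows that any real symplectic representation $\rho:G\to\Sp(V_\Rbb)$ compatible with the Hermitian symmetric embedding $X^+\mono\Hscr_V^\pm$ decomposes as $\rho=\rho_0\oplus\bigoplus_{j\geq 1}\rho_j$, with each non-trivial $\rho_j:G\to\SU(p_j,q_j)$ factoring through the unitary group of an invariant indefinite Hermitian form on a complex vector space. An irreducible real symplectic representation therefore arises from a single irreducible complex representation of $G$ carrying such an invariant Hermitian form.

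Next I would classify the irreducible complex representations of $\SU(H)$ satisfying Satake's compatibility condition (H2) by highest weights. The characteristic element $H_0$ lies in the one-dimensional center of $\Lie K$ with $K=S(\Urm(n)\times\Urm(1))$; with suitable normalization it is proportional to $\mathrm{diag}(1,\ldots,1,-n)$. On a weight vector of weight $\mu=(\mu_1,\ldots,\mu_{n+1})$ in an irreducible $V_\lambda$, $H_0$ acts by a scalar depending only on $\mu_{n+1}$, since $\sum_j\mu_j=|\lambda|$ is determined by $\lambda$. Satake's condition forces $\rho(H_0)$ to have exactly two eigenvalues of opposite sign, which translates to $\mu_{n+1}$ attaining precisely two distinct values across all weights of $V_\lambda$. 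An inspection of weight systems shows this occurs exactly when $\lambda=\omega_m$ is a fundamental weight for some $m\in\{1,\ldots,n\}$: the weights of $V_{\omega_m}=\wedge^m_\Cbb H$ are $\sum_{j\in S}e_j$ for $m$-subsets $S\subset\{1,\ldots,n+1\}$, so $\mu_{n+1}\in\{0,1\}$; for any other dominant $\lambda$ the extremal weights already exhibit at least three distinct values of $\mu_{n+1}$ (as one checks, for instance, on $\mathrm{Sym}^k$ or on the adjoint representation).

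To complete the proof I would construct the invariant Hermitian form on $\Lambda_m=\wedge^m_\Cbb H$ and compute its signature. The $m$-fold exterior power $\wedge^m h$ is the natural $\SU(H)$-invariant Hermitian form on $\wedge^m H$; in an $h$-orthonormal basis $e_1,\ldots,e_{n+1}$ with $h=\mathrm{diag}(1,\ldots,1,-1)$, the element $e_{i_1}\wedge\cdots\wedge e_{i_m}$ has $\wedge^m h$-norm squared equal to $+1$ if $n+1\notin\{i_1,\ldots,i_m\}$ and $-1$ otherwise, yielding a positive part of dimension $\binom{n}{m}$ and a negative part of dimension $\binom{n}{m-1}$. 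Its imaginary part is then a non-degenerate $\SU(H)$-invariant real symplectic form on $\wedge^m_\Cbb H$ viewed as a real vector space of dimension $2\binom{n+1}{m}$. Real irreducibility of $\Lambda_m$ follows because $\wedge^m_\Cbb H$ is absolutely irreducible as a complex representation of $\SU(H)$ and its complex conjugate is $\wedge^{n+1-m}_\Cbb H$, which is non-isomorphic to it except in the self-conjugate case $2m=n+1$; in that edge case a separate check via the Frobenius--Schur indicator of the wedge pairing $\wedge^m H\otimes\wedge^m H\to\wedge^{n+1}H\isom\Cbb$ confirms that $\Lambda_m$ remains real irreducible.

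The main obstacle is the fine book-keeping in translating Satake's compatibility condition into the clean two-eigenvalue property of $H_0$, which requires careful normalization of the characteristic element matching the convention in \cite{satake embedding}, together with the verification that non-fundamental weights yield strictly more than two values of $\mu_{n+1}$ and the real-irreducibility check in the self-conjugate edge case.
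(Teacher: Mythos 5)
The paper gives no proof of this statement: it is quoted directly from Satake \cite[\S 3.2]{satake embedding} and used as a black-box classification input (note the sentence immediately preceding the theorem). So your proposal actually supplies an argument where the paper only cites, and the broad line of attack is indeed the one underlying Satake's classification: analyze the characteristic element $H_0$, which for $\SU(n,1)$ spans the center of $\mathfrak{k}$, observe that on a weight vector of weight $\mu$ it acts by an affine function of $\mu_{n+1}$, and translate Satake's compatibility condition into a two-value constraint on $\mu_{n+1}$, which (by saturation of the weight multiset, since $\mu_{n+1}$ runs through a full interval of integers) singles out the fundamental weights and hence the wedge powers $\wedge^m H$. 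Your signature computation for $\wedge^m h$ in an $h$-orthonormal basis is correct and matches the statement.

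Two imprecisions are worth flagging. First, condition (H2) in its literal form $\rho(H_0)=H'_0$ is \emph{not} satisfied by the embeddings $\SU(n,1)\to\Sp(\Res_{\Cbb/\Rbb}\wedge^m H)$: on $\wedge^m H$ the two eigenvalues of $\rho(H_0)$ are $\tfrac{im}{n+1}$ and $\tfrac{i(m-n-1)}{n+1}$, which are generically not $\pm i/2$ and not of equal magnitude. What (H1) gives (Schur's lemma on each irreducible complex constituent) is that $\rho(H_0)-H_0'$ acts by a scalar, so the usable criterion is simply ``$\rho(H_0)$ has exactly two eigenvalues on $W$'' with no sign or magnitude refinement; that is what translates to the two-value condition on $\mu_{n+1}$. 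Second, in the self-conjugate case $2m=n+1$ the invariant pairing $\wedge^m H\otimes\wedge^m H\to\wedge^{n+1}H$ has sign $(-1)^m$, so $\wedge^m H$ is of \emph{real} type when $m$ is even, in which case $\Res_{\Cbb/\Rbb}\wedge^m H$ does decompose as an abstract $G$-module. Your blanket assertion that the Frobenius--Schur indicator ``confirms that $\Lambda_m$ remains real irreducible'' therefore cannot stand as written; what saves the statement is that the two halves are Lagrangian for $\Im(\wedge^m h)$ and hence are not symplectic sub-objects, so $\Lambda_m$ is still irreducible in the category of symplectic representations. This distinction (abstract irreducibility versus symplectic irreducibility) is the right thing to spell out, and should be matched against the precise formulation in Satake's paper.
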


In particular, when $n=1$, the identity representation is the only non-trivial irreducible real symplectic representation of $\SU(1,1)$.

\subsection{Primary symplectic representations}\label{subsection Primary symplectic representations}
To define Shimura subvarieties in $\Acal_V$, we need connected Shimura subdata $(\Gbf,X;X^+)\mono(\GSp_V,\Hscr_V^\pm;\Hscr_V^+)$. The embedding $X^+\mono\Hscr_V^+$ is given by the real symplectic representation $\Gbf^\der(\Rbb)\mono\Sp_V(\Rbb)$, which is the evaluation over $\Rbb$ of the $\Qbb$-group homomorphism $\Gbf^\der\mono\Sp_V$. We thus need further results in \cite{satake classification} on the classification of symplectic representations over $\Qbb$.

We recall a few general notions on representations of reductive algebraic groups. Let $k$ be a field of characteristic zero, and let $\Gbf$ be a reductive $k$-group. By representations of $\Gbf$ we mean finite-dimensional algebraic representations of $\Gbf$, i.e., $k$-group homomorphisms of the form $\rho:\Gbf\ra\GL_V$ for some finite-dimensional $k$-vector space $V$. We thus have the $k$-linear abelian  category $\Rep(\Gbf/k)$ of representations of $\Gbf$, with morphisms being $\Gbf$-equivariant $k$-linear homomorphisms. Since $\Gbf$ is assumed to be reductive, the category $\Rep(\Gbf/k)$ is semi-simple, i.e., every short exact sequence splits and therefore every representation is the direct sum of irreducible representations. Note that the endomorphism ring $\End_\Gbf(\rho)$ of any $\rho\in\Rep(\Gbf/k)$ is a finite-dimensional semi-simple $k$-algebra, and $\rho$ is irreducible \ifof $\End_\Gbf(\rho)$ is a finite-dimensional division $k$-algebra.

For $K/k$ a field extension, we have the base change $\Rep(\Gbf/k)\ra\Rep(\Gbf_K/K)$, sending $(V,\rho)$ to $(V_K,\rho_K)$. A $K$-irreducible representation of $\Gbf$ is $(V,\rho)\in\Rep(\Gbf/k)$ such that $(V_K,\rho_K)$ is irreducible in $\Rep(\Gbf_K/K)$. When $K=\kbar$ is an algebraic closure of $k$, we get the notion of absolutely irreducible representations of $\Gbf$, and it is clear that they are exactly the $K$-irreducible representations whenever $K$ is a field extension of $k$ containing an algebraic closure of $k$.

For a fixed absolutely irreducible representation $(\Lambda,\lambda)$ in $\Rep(\Gbf_\kbar/\kbar)$,
a representation $(V,\rho)\in\Rep(\Gbf/k)$ is said to be primary of type $(\Lambda,\lambda)$
if there is an isomorphism $(V_\kbar,\rho_\kbar)\isom(\Lambda,\lambda)^{\oplus N}$ in $\Rep(\Gbf_\kbar/\kbar)$ for some integer $N$.
It is thus obvious that irreducible representations in $\Rep(\Gbf/k)$ are primary,
i.e., their base change to $\kbar$ are direct product of copies of a single absolutely irreducible representation over $\kbar$.

Now let $(\Gbf,X)\subset(\GSp_V,\Hscr_V^\pm)$ be a Shimura subdatum of $\SU(n,1)$-type with $\Gbf^\der=\Res_{F/\Qbb}\SU(H)$,
and we assume for simplicity that its action on $V$ admits no trivial subrepresentations. By the classification in \cite{satake classification} (especially Section 7),
we know that the representation $\rho:\Gbf^\der\ra\Sp_V$ is symplectic, and it is a scalar restriction, i.e.,
there exists a symplectic representation $\lambda:\, \SU(H) \to \Sp_L$ over $F$ with $L$ some symplectic space over $F$ and $\rho=\Res_{F/\Qbb}\lambda$.
In particular, $V=\Res_{F/\Qbb}L$ is the $\Qbb$-vector space underlying $L$, and $\dim_\Qbb V=d\dim_FL$ with $d=[F:\Qbb]$.
Since $F\otimes_\Qbb\Rbb=\prod\limits_{j=1,\cdots,d} \Rbb$, the base change $\rho_\Rbb$ decomposes as $\prod\limits_{j=1,\cdots,d}\lambda_j$
with $\lambda_j:\SU(H_j)\ra\Sp_{L_j}$ is the base change of $\lambda$ along $\tau_j:F\mono\Rbb$, and $L_j$ being the base change of $L$ by $\tau_j$.
Hence on $V\otimes_\Qbb\Rbb\isom\oplus_jL_j$, $\Gbf^\der_\Rbb\isom\prod_j\SU(H_j)$ acts on $L_j$ through $\SU(H_j)$.

\begin{definition}[(Primary type)]\label{primary type} Let $\rho=\Res_{F/\Qbb}\lambda$ be as above, and when $j=1$ we use the subscript $\tau$ in place of $1$. The representation $\rho$ is said to be $\tau$-primary of type $\Lambda_m$ for some fixed $m\in\{1,\cdots,n\}$, if the component $\lambda_\tau$ is isomorphic to a direct sum of copies of $\Lambda_m$: $\lambda_\tau\isom\Lambda_m^{\oplus N}$ for some integer $N>0$.

In particular, we can view $(L,\lambda)$ as an $F$-form of the $N$-fold direct sum of the natural representation of $\SU(H)$ on $\wedge^m_EH$.
\end{definition}

It follows from the classification of Satake that every symplectic representation of $\Res_{F/\Qbb}\SU(H)$ is the direct sum of a trivial representation with finitely many $\tau$-primary representations of various types $\Lambda_m$ ($m\in\{1,\cdots,n\}$).

We are thus lead to the following (cf. \cite[\S\,8.3]{satake classification}):

\begin{proposition}[(Primary representation of $\SU(n,1)$-type)]\label{primary decomposition over a Shimura subvariety of SU(n,1)-type}
Let $(\Gbf,X;X^+)\subset(\GSp_V,\Hscr_V^\pm;\Hscr_V)$ be a connected Shimura subdatum of $\SU(n,1)$-type with $V$ a symplectic space over $\Qbb$ of dimension $2g$,
defining a Shimura subvariety $M\isom\Gamma_M\bsh X^+$, with $\Gamma_M$ a torsion-free congruence subgroup of $\Gbf^\der(\Qbb)^+$.
Let $\Ecal_M$ be the $\Qbb$-PVHS on $M$ induced by the universal abelian scheme over the Siegel moduli scheme defined by $(\GSp_V,\Hscr_V^\pm)$, and $\Ecal_M=\Fcal_M\oplus\Ucal_M$ a decomposition into Higgs bundles with $\Ucal_M$ the maximal unitary part.
Assume that $\Gbf^\der\isom\Res_{F/\Qbb}\SU(H)$ is given by some Hermitian space over a CM extension $E/F$ of dimension $n+1$ with $F$ some totally real number field of degree $d$
such that the symplectic representation $\rho:\Gbf^\der\ra\Sp_V$ is $\tau$-primary of type $\Lambda_m$ for some $m\in\{1,\cdots,n\}$, without trivial subrepresentations.
Then
$$\rank \Ucal_M^\minuszero =\frac{g(d-1)}{d}, \qquad \rank \Fcal_M^\minuszero =\frac{g}{d}.$$
\end{proposition}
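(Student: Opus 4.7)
The plan is to identify the decomposition $\Ecal_M = \Fcal_M \oplus \Ucal_M$ at the level of $\Gbf^\der(\Rbb)$-representations on $V_\Rbb$, and then to count dimensions. By Remark \ref{representations of fundamental groups}, since $\Gamma_M$ is Zariski dense in $\Gbf^\der$, the unitary Higgs subbundle $\Ucal_M$ corresponds to the maximal $\Gbf^\der(\Rbb)$-subrepresentation of $V_\Cbb$ on which the action factors through a compact group. The main step is therefore to pin down which real factors of $\Gbf^\der_\Rbb$ are compact, and how $V_\Rbb$ splits accordingly.

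First I would use the scalar restriction structure $\rho = \Res_{F/\Qbb}\lambda$ to decompose $V_\Rbb = \bigoplus_{j=1}^{d} L_j$ with $L_j = L \otimes_{F,\tau_j} \Rbb$, on which $\Gbf^\der_\Rbb \isom \prod_{j=1}^{d} \SU(H_j)$ acts factorwise via the base changes $\lambda_j$ of $\lambda$ along the real embeddings $\tau_j$. The signature hypothesis on $H$ forces $\SU(H_\tau) \isom \SU(n,1)$ to be the unique non-compact factor, while $\SU(H_j)$ is compact for $j = 2, \ldots, d$. Since every finite-dimensional representation of a compact Lie group is unitary, each $L_j \otimes_\Rbb \Cbb$ with $j \geq 2$ lies inside the unitary part. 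For the remaining factor, the $\tau$-primary hypothesis gives $\lambda_\tau \isom \Lambda_m^{\oplus N}$, with $\Lambda_m$ a non-trivial irreducible real symplectic representation of $\SU(n,1)$ by Theorem \ref{irreducible real symplectic representations of SU(n,1)}. Since $\SU(n,1)$ is a connected non-compact simple Lie group with finite center, any non-trivial finite-dimensional complex representation fails to be unitary, so the only unitary subrepresentation of $L_\tau \otimes_\Rbb \Cbb$ is the trivial one, which is zero under the no-trivial-subrepresentation hypothesis. Consequently $\Fcal_M \otimes \Cbb \isom L_\tau \otimes_\Rbb \Cbb$ and $\Ucal_M \otimes \Cbb \isom \bigoplus_{j=2}^{d} L_j \otimes_\Rbb \Cbb$.

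To finish, the identity $2g = \dim_\Qbb V = d \cdot \dim_F L$ yields $\dim_\Rbb L_j = 2g/d$ for every $j$, and the polarized Hodge structure of weight $1$ on each $L_j \otimes_\Rbb \Cbb$ splits evenly into types $(-1,0)$ and $(0,-1)$, each of complex dimension $g/d$. Summing the contributions gives $\rank \Fcal_M^{\minuszero} = g/d$ and $\rank \Ucal_M^{\minuszero} = (d-1)g/d$, as asserted. The main subtlety I anticipate is the unitary characterization of the non-compact component: one has to rule out hidden unitary directions inside $L_\tau \otimes \Cbb$, which is precisely what the $\tau$-primary assumption of type $\Lambda_m$ (with $m \geq 1$) together with the absence of trivial subrepresentations in $\rho$ delivers.
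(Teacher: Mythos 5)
Your proposal is correct and follows essentially the same route as the paper's proof: reduce via Zariski density of $\Gamma_M$ to the decomposition of the real symplectic representation $\rho_\Rbb=\bigoplus_j\lambda_j$, observe that the $d-1$ compact factors $\SU(H_j)$ ($j\geq 2$) contribute entirely to the unitary part while the $\tau$-primary hypothesis together with absence of trivial subrepresentations forces $L_\tau\otimes_\Rbb\Cbb$ to contribute nothing, and then count dimensions using $2g=d\dim_FL$. The only cosmetic difference is that you spell out explicitly the standard fact that a connected non-compact simple Lie group admits no non-trivial finite-dimensional unitary representations, which the paper leaves implicit.
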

\begin{proof}
We need to decompose the representation of fundamental group $\Gamma_M\ra\Gbf^\der(\Cbb)\ra\Sp_V(\Cbb)$, which is extended from the real representation $\Gamma_M\ra\Gbf^\der(\Rbb)\ra\Sp_V(\Rbb)$. Since $\Gamma_M\subset\Gbf^\der(\Rbb)$ is Zariski dense, it suffices to decompose the real symplectic representation of algebraic groups $\rho_\Rbb:\Gbf^\der_\Rbb=\prod_j\SU(H_j)\ra\Sp_{V,\Rbb}$.

As we have seen, we can find an $F$-model $L$ such that $\Gbf^\der\ra\Sp_V$ is restricted from $\lambda:\SU(H)\ra\Sp_L$ and the real symplectic representation is $\rho_\Rbb=\oplus_j\lambda_j$.
For $j=2,\cdots,d$, the $\Rbb$-group $\SU(H_j)$ is compact, and the action of $\Gamma_M$ on $\bigoplus\limits_{j=2,\cdots,d}L_j$ through a product of compact unitary groups on the $L_j$'s,
which can be easily made into a subgroup of some compact unitary group on $\bigoplus\limits_{j\neq 1}L_j$. On the other hand, for $j=1$ and $\tau=\tau_1$,
the $\Rbb$-group $\SU(H_\tau)$ is simple and non-compact and $\lambda_\tau=\lambda\otimes_{F,\tau}\Rbb$ is a direct sum of copies of $\Lambda_m$, hence $\lambda_\tau$ has no contribution to the unitary part.

When we take the further base change $\Rbb\mono\Cbb$, the action of $\Gamma_M$ on $V\otimes_\Qbb\Cbb$ factors through $V\otimes_\Qbb\Rbb$,
hence the part $\bigoplus\limits_{j=2,\cdots,d}L_j\otimes_\Rbb\Cbb$ remains unitary,
and the part $L_\tau\otimes_\Rbb\Cbb$ splits into a conjugate pair of two copies of $L_\tau$ viewed as $\Cbb$-vector spaces,
with no contribution to the unitary part.

We thus conclude that the unitary part $\Ucal_M$ corresponds to the action of $\Gamma_M$ on  $\bigoplus\limits_{j=2,\cdots,d}L_j\otimes_\Rbb\Cbb$, whose complex dimension is $(d-1)\dim_FL$.
Since $V=\Res_{F/\Qbb}L$ is of dimension $2g=d\dim_FL$, we conclude that $$\rank\Ucal_M^\minuszero=\frac{(d-1)\dim_FL}{2}=\frac{g(d-1)}{d},
\text{\quad and \quad} \rank \Fcal_M^\minuszero =g-\rank\Ucal_M^\minuszero=\frac{g}{d}.$$
The proof is complete.
\end{proof}

A formula of general symplectic representations is also obtained:
\begin{corollary}[(General representation of $\SU(n,1)$-type)]\label{formula of SU(n,1)-type}
Let $(\Gbf,X;X^+)\subset(\GSp_V,\Hscr_V^\pm;\Hscr_V^+)$ be a Shimura subdatum of $\SU(n,1)$-type,
defining the Shimura subvariety $M$. Assume that $\Gbf^\der=\Res_{F/\Qbb}\SU(H)$ is given by an Hermitian space $H$ over a CM extension $E/F$ subject to the constraints of signature,
with $\tau:F\mono\Rbb$ the only embedding contributing to the symmetric domain $X$. Assume that $$V=V_0\oplus\Res_{F/\Qbb}\lambda_1\oplus\cdots\oplus\Res_{F/\Qbb}\lambda_n$$
where $V_0$ is a trivial representation of $\Gbf^\der$, and $\lambda_m:\SU(H)\ra\Sp_{L_m}$ is an $F$-linear symplectic representation $\tau$-primary of type $\Lambda_m$, $m=1,\cdots,n$.
Let $\Ecal_M=\Fcal_M\oplus\Ucal_M$ be the decomposition of Higgs bundles of the representation of $\pi_1(M)$ induced from $\rho_\Cbb$, with $\Ucal_M$ the maximal unitary Higgs subbundle.
Then
$$\rank\Ucal_M^{-1,0}=\frac{\dim_\Qbb V_0+(d-1)\cdot\sum\limits_{m=1}^n\dim_FL_m}{2},
\qquad \rank\Fcal_M^{-1,0}=\frac{\sum\limits_{m=1}^n\dim_FL_m}{2}.$$
\end{corollary}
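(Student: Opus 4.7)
The plan is to reduce the statement to Proposition \ref{primary decomposition over a Shimura subvariety of SU(n,1)-type} term-by-term using the direct-sum decomposition of $V$, and to treat the trivial summand $V_0$ separately. First I would invoke the semi-simplicity of $\Rep(\Gbf^\der/\Qbb)$: the given decomposition $V=V_0\oplus\bigoplus_{m=1}^{n}\Res_{F/\Qbb}L_m$ is a decomposition in the category of $\Qbb$-rational symplectic representations of $\Gbf^\der$, so after restricting the universal $\Qbb$-PVHS on $\Acal_V$ to $M$, we obtain a corresponding orthogonal decomposition of the canonical $\Qbb$-PVHS on $M$, compatible with the Hodge types. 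This in turn yields a direct-sum decomposition of Higgs bundles $\Ecal_M=\Ecal_{M,0}\oplus\bigoplus_{m=1}^n\Ecal_{M,m}$ on $M$.

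Next I would observe that the maximal unitary Higgs subbundle of a direct sum is the direct sum of the maximal unitary Higgs subbundles. This is immediate from the description recalled in Remark \ref{representations of fundamental groups}: on each summand the property of being unitary can be checked at the level of $\Gamma_M$-representations through the Simpson correspondence, and the maximal subrepresentation of $\Gamma_M$ factoring through a compact unitary group is additive over direct sums of sub-$\Gbf^\der$-representations over $\Qbb$. Hence $\Ucal_M=\Ucal_{M,0}\oplus\bigoplus_m\Ucal_{M,m}$, and similarly for $\Fcal_M$.

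The computation then splits into cases. On $V_0$ the group $\Gamma_M$ acts trivially, so $\Ecal_{M,0}$ is entirely unitary: $\rank\Ucal_{M,0}^{-1,0}=\dim_\Qbb V_0/2$ and $\rank\Fcal_{M,0}^{-1,0}=0$. For each $m\in\{1,\dots,n\}$, the subrepresentation $\Res_{F/\Qbb}L_m$ is exactly the setting of Proposition \ref{primary decomposition over a Shimura subvariety of SU(n,1)-type} (with $g$ replaced by $\dim_F L_m\cdot d/2$), so the real analysis of $\Gbf^\der_\Rbb=\prod_j\SU(H_j)$-action in that proof applies verbatim: the contributions from $j=2,\dots,d$ are unitary and the contribution from $j=\tau$ is not, giving $\rank\Ucal_{M,m}^{-1,0}=(d-1)\dim_F L_m/2$ and $\rank\Fcal_{M,m}^{-1,0}=\dim_F L_m/2$. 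Summing yields the two formulas asserted.

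The only step requiring care is the additivity of the maximal unitary subbundle over the direct sum. If there were non-trivial morphisms of Higgs bundles between the summands $\Ecal_{M,m}$, one might worry about whether the unitary part could ``mix'' across them. In our setting this is not an issue, since the summands come from rationally inequivalent irreducible constituents of $V$ as a $\Gbf^\der$-representation, so any $\Gbf^\der$-equivariant (hence any $\Gamma_M$-equivariant, by Zariski density) morphism between distinct summands of different primary types $\Lambda_m$ must vanish; the only potentially non-obvious case is multiplicities within a single primary type, and there additivity of the unitary part under taking $\oplus$ of isomorphic subrepresentations is a triviality.
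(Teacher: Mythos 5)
Your proof is correct and takes essentially the same approach as the paper's (one-line) proof: decompose $V$ into the trivial summand $V_0$ plus the $\tau$-primary pieces, apply Proposition~\ref{primary decomposition over a Shimura subvariety of SU(n,1)-type} to each $\Res_{F/\Qbb}L_m$, and observe that $V_0$ contributes only to the unitary part. The extra care you take over additivity of the maximal unitary subbundle across a direct sum is sound, though it can be handled even more simply by noting that each projection of a unitary subrepresentation of a direct sum is itself unitary, so the maximal unitary part always splits as the direct sum of the maximal unitary parts of the summands, with no need to invoke rational inequivalence of the constituents.
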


\begin{proof}
It suffices  to point out that the trivial subrepresentation $V_0$ of $\Gbf^\der$ only contributes to a trivial representation of $\pi_1(M)$. 
\end{proof}

\subsection{Restriction to a Shimura curve}\label{subsection Restriction to a Shimura curve}

To compute the ranks in the decomposition of the Higgs bundle $\Ecal_C$ on a Shimura curve $C$ embedded in a Shimura subvariety of $\SU(n,1)$-type, we again reduce to the decomposition over $\Rbb$. We first consider the following real irreducible case:

\begin{lemma}[(Real irreducible case of type $\Lambda_m$)]\label{the real irreducible case}
  Let $H$ be an Hermitian space over $\Cbb/\Rbb$ of signature $(n,1)$, and $H=U\oplus W$ an orthogonal decomposition involving a positive definite subspace $U$ of dimension $n-1$ and $W$ a subspace of signature $(1,1)$, giving rise to the inclusion $\SU(W)\mono\SU(H)$. Let $\Lambda_m$ be the symplectic representation of $\SU(H)$ on $\wedge^m_\Cbb H$. Then the restriction of $\Lambda_m$ to $\SU(W)$ decomposes as $T\oplus\Std^{\oplus r}$, where $T$ is a trivial symplectic representation, and $\Std$ is the identity symplectic representation of $\SU(1,1)$, with $r=\binom{n-1}{m-1}$.

\end{lemma}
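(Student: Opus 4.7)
The plan is to use the standard branching of $\wedge^m(U\oplus W)$ together with the obvious fact that $\SU(W)\subset\SU(H)$ acts trivially on $U$ and preserves $\wedge^2_\Cbb W$. First I would decompose
$$\wedge^m_\Cbb H \;=\; \bigoplus_{i+j=m}\wedge^i_\Cbb U\otimes_\Cbb\wedge^j_\Cbb W,$$
and use $\dim_\Cbb W=2$ to restrict the sum to $j\in\{0,1,2\}$. The three surviving summands then restrict to $\SU(W)$ as follows: $\wedge^m_\Cbb U$ is trivial; $\wedge^{m-1}_\Cbb U\otimes_\Cbb W$ is a direct sum of $\binom{n-1}{m-1}$ copies of the defining representation of $\SU(W)$ on $W$; and $\wedge^{m-2}_\Cbb U\otimes_\Cbb\wedge^2_\Cbb W$ is once again trivial because $\SU(W)$ preserves $\det_\Cbb W$.

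Next I would verify that this is a decomposition of symplectic representations. The symplectic form on $\wedge^m_\Cbb H$ used in $\Lambda_m$ is the imaginary part of the Hermitian form induced from $h$ on $H$. Because $U\perp W$ in $H$, the summands $\wedge^i_\Cbb U\otimes_\Cbb\wedge^j_\Cbb W$ for distinct pairs $(i,j)$ with $i+j=m$ are pairwise orthogonal for the induced Hermitian form, hence also for its imaginary part. Collecting the two trivial summands into $T=\wedge^m_\Cbb U\oplus(\wedge^{m-2}_\Cbb U\otimes_\Cbb\wedge^2_\Cbb W)$ then produces the desired symplectic $\SU(W)$-splitting
$$\Lambda_m\big|_{\SU(W)}\;\isom\; T\oplus\Std^{\oplus r},\qquad r=\binom{n-1}{m-1}.$$

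The only point requiring a moment's care is the identification of the nontrivial piece with $\Std^{\oplus r}$ rather than some twist: one must check that the Hermitian form restricts nondegenerately to $\wedge^{m-1}_\Cbb U\otimes_\Cbb W$ and equips each copy of $W$ with exactly its ambient signature-$(1,1)$ structure. This follows from the tensor-product formula for induced Hermitian forms on exterior powers together with the fact that $U$ is positive definite: the Hermitian form on $\wedge^{m-1}_\Cbb U$ is positive definite of rank $\binom{n-1}{m-1}$, and tensoring it with the signature-$(1,1)$ form on $W$ yields $\binom{n-1}{m-1}$ mutually orthogonal copies of $W$ carrying the identity $\SU(1,1)$-representation. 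So no serious obstacle arises; the lemma is essentially a direct exterior-algebra computation, and its role is simply to feed the numerical datum $r=\binom{n-1}{m-1}$ into the Shimura-curve bound of Theorem \ref{exclusion of Shimura curves}.
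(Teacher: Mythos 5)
Your proof is correct and follows the same route as the paper: decompose $\wedge^m_\Cbb(U\oplus W)=\bigoplus_{j=0,1,2}\wedge^{m-j}_\Cbb U\otimes\wedge^j_\Cbb W$ and observe that only the $j=1$ summand $\wedge^{m-1}_\Cbb U\otimes W\isom W^{\oplus\binom{n-1}{m-1}}$ is nontrivial under $\SU(W)$. Your extra paragraph checking that the summands are symplectically orthogonal and that the induced Hermitian form on $\wedge^{m-1}_\Cbb U\otimes W$ has the right signature is a careful addition the paper leaves implicit, but it does not change the argument.
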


\begin{proof}
We first note that when $m=1$, the action of $\SU(1,1)$ on $\Lambda_1=H$ factors through $\SU(U)\times\SU(W)\mono\SU(H)$, hence $\Lambda_1=H=U\oplus W$ is already of the claimed form.

For general $m$, it suffices to notice that $$\wedge^m(U\oplus W)=\bigoplus_{j=0,1,2}\wedge^{m-j}U\otimes\wedge^jW$$ because $W$ is of dimension 2 over $\Cbb$, hence $\wedge^{m-1}U\otimes W\isom W^{\oplus r}$ is the only non-trivial part in $\Lambda_m$ for the restriction to $\SU(W)$, with $r=\binom{n-1}{m-1}$.
\end{proof}

Adapt this lemma to the general case of $\tau$-primary symplectic representations of type $\Lambda_m$, we get:

\begin{proposition}[(Decomposition over a curve)]\label{decomposition over a curve} Let $C\subset M\subset\Acal_V$ be an inclusion chain of Shimura subvarieties given by the chain of Shimura subdata $(\Gbf^W,X^W)\subset(\Gbf^H,X^H)\subset(\GSp_V,\Hscr_V)$, where \begin{itemize}
\item $(\Gbf^H,X^H)$ is a Shimura datum of $\SU(n,1)$-type, given by some Hermitian space $H$ over a CM extension $E/F$, and $\Gbf^\der=\Res_{F/\Qbb}\SU(H)$

\item $(\Gbf^W,X^W)$ is the Shimura subdatum of $\SU(1,1)$-type in $(\Gbf^H,X^H)$ cut out by a positive definite subspace $U$ of dimension $n-1$;

\item $V=\Res_{F/\Qbb}L$ for some symplectic space $L$ over $F$, and the representation $\Gbf^\der\ra\Sp_V$ is restricted from some $\tau$-primary symplectic representation $\SU(H)\ra\Sp_L$ of type $\Lambda_m$.
\end{itemize}
Let $\Ecal_C$ be the Higgs bundle induced by the universal abelian scheme $\Xcal\ra\Acal_V$,
which decomposes into $\Ecal_C=\Fcal_C\oplus\Ucal_C$ with $\Ucal_C$ the maximal unitary Higgs subbundle. Then
$$\rank\Ucal_C^\minuszero=\dfrac{\dim_\Qbb V\cdot \left(\frac{d}{2}\cdot \binom{n+1}{m}-\binom{n-1}{m-1}\right)}{d\cdot \binom{n+1}{m}}, \qquad
\rank\Fcal_C^{\minuszero}=\dfrac{\dim_\Qbb V\cdot \binom{n-1}{m-1}}{d\cdot \binom{n+1}{m}},
\qquad $$
with $d=[F:\Qbb]$.
\end{proposition}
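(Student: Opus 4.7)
The plan is to reduce the computation of $\rank\Ucal_C^\minuszero$ and $\rank\Fcal_C^\minuszero$ to an analysis of the real representation of $(\Gbf^W)^\der(\Rbb)\isom\prod_{j=1}^d \SU(W_j)$ on $V\otimes_\Qbb\Rbb \cong \bigoplus_j L_j$, in direct analogy with the proof of Proposition \ref{primary decomposition over a Shimura subvariety of SU(n,1)-type}. Since $\pi_1(C)$ is Zariski dense in $(\Gbf^W)^\der$ by Borel density, the maximal subrepresentation of $V_\Cbb$ on which $\pi_1(C)$ acts through a compact unitary group coincides with the maximal subspace on which each factor $\SU(W_j)(\Rbb)$ acts through a compact subgroup.

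For $j\geq 2$, the space $W_j$ is positive definite as an orthogonal summand of the definite $H_j$, so $\SU(W_j)$ is compact and the whole $L_j\otimes_\Rbb\Cbb$ is unitary, contributing $2N\binom{n+1}{m}$ to $\rank\Ucal_C$ (where $N$ is determined by $\dim_F L = 2N\binom{n+1}{m}$ via $\dim_\Rbb\Lambda_m = 2\binom{n+1}{m}$). For $j=\tau$, the $\tau$-primary hypothesis gives $L_\tau\cong \Lambda_m^{\oplus N}$ as a real $\SU(H_\tau)$-representation, and Lemma \ref{the real irreducible case} applied to the inclusion $\SU(W_\tau)\hookrightarrow \SU(H_\tau)$ yields
\[
L_\tau\big|_{\SU(W_\tau)} \cong T^{\oplus N} \oplus \Std^{\oplus Nr}, \qquad r=\binom{n-1}{m-1},
\]
with $T$ a trivial $\SU(W_\tau)$-representation of complex dimension $\binom{n-1}{m}+\binom{n-1}{m-2}$ and $\Std$ the identity representation of $\SU(1,1)$.

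Upon complexification, $T^{\oplus N}\otimes_\Rbb\Cbb$ remains trivial and hence unitary, while $\Std^{\oplus Nr}\otimes_\Rbb\Cbb \cong \Std^{\oplus Nr}\oplus\overline{\Std}^{\oplus Nr}$ admits no nonzero unitary subrepresentation under $\pi_1(C)$. This is the delicate step and the main obstacle of the argument: since $\SU(W_\tau)\isom\SU(1,1)$ is a noncompact simple real Lie group with no nontrivial finite-dimensional unitary representation, and $\pi_1(C)$ projects to a Zariski dense lattice in it, the only compact-unitary subrepresentation inside $\Std\oplus\overline{\Std}$ is zero, because both $\Std$ and $\overline{\Std}$ are nontrivial irreducible $\SU(W_\tau)$-representations.

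Summing the two contributions and using the weight-$1$ $\Qbb$-PVHS identity $\rank\Ucal_C^\minuszero = \tfrac{1}{2}\rank\Ucal_C$ (and similarly for $\Fcal_C$), I would obtain
\[
\rank\Ucal_C^\minuszero = (d-1)N\binom{n+1}{m} + N\Bigl(\binom{n-1}{m}+\binom{n-1}{m-2}\Bigr),\qquad \rank\Fcal_C^\minuszero = 2N\binom{n-1}{m-1}.
\]
The Pascal-type identity $\binom{n+1}{m} = \binom{n-1}{m}+2\binom{n-1}{m-1}+\binom{n-1}{m-2}$, together with $\dim_\Qbb V = 2dN\binom{n+1}{m}$, finally rewrites these in the asserted form; the remaining calculations are purely bookkeeping.
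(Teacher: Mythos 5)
Your argument is correct and follows essentially the same route as the paper: reduce to the decomposition of the real symplectic representation $\rho_\Rbb=\bigoplus_j\lambda_j$ restricted to $\Res_{F/\Qbb}\SU(W)_\Rbb\cong\prod_j\SU(W_j)$, note that the factors $j\geq 2$ are compact so $\bigoplus_{j\geq 2}L_j\otimes_\Rbb\Cbb$ is unitary, and apply Lemma~\ref{the real irreducible case} at $\tau$ to isolate $\Std^{\oplus Nr}$ as the non-unitary part (with the trivial summand going into the unitary part). Your justification that $\Std\oplus\overline{\Std}$ carries no nonzero compact-unitary $\pi_1(C)$-subrepresentation, via Zariski density of $\pi_1(C)$ in the noncompact simple $\SU(W_\tau)$, is the same principle invoked in the paper, and your dimension bookkeeping (with $\dim_F L=2N\binom{n+1}{m}$) matches the stated formulas after the Pascal identity.
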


\begin{proof}
Similar to  the proof of Proposition \ref{primary decomposition over a Shimura subvariety of SU(n,1)-type},
it suffices to study how the algebraic representation $\rho_\Rbb:\Res_{F/\Qbb}\SU(H)_\Rbb\ra\Sp_{V,\Rbb}$ decomposes when restricted to $\Res_{F/\Qbb}\SU(W)_\Rbb$.
Since $\rho=\Res_{F/\Qbb}\lambda$ and $\rho_\Rbb=\bigoplus\limits_{j=1,\cdots,d}\lambda_j$ is a direct sum, with $\Gbf^\der_\Rbb$ acts on $\bigoplus\limits_{j=2,\cdots,d}L_j$ through a compact quotient,
we deduce that the pull-back of the unitary part $\Ucal_M$ to $C$ remains unitary.

For the remaining part $\tau=\tau_1$, the representation $\lambda_\tau=\lambda_1:\SU(H_\tau)\ra\Sp_{L_\tau}$ decomposes as $L_\tau\isom\Lambda_m^{\oplus N}$ for some integer $N\geq 1$.
Restrict it to $\SU(W_\tau)$, we are reduced to the case of Lemma \ref{the real irreducible case}:
each $\Lambda_m$ is the sum of a non-trivial subrepresentation $\wedge^{m-1}U_\tau\otimes W_\tau\isom\Std^{\oplus r}$ and a trivial representation, with $r=\binom{n-1}{m-1}$.
Hence the rank of $\Fcal_C$ is $2N\binom{n-1}{m-1}$. Since $L_\tau\isom\Lambda_m^{\oplus N}$, we have $\dim_FL=N\binom{n+1}{m}$,
which gives $$\rank\Fcal_C^\minuszero=\frac{1}{2}\rank\Fcal_C=\frac{\dim_FL\cdot\binom{n-1}{m-1}}{\binom{n+1}{m}}.$$
The proof is complete by noting that $\rank\Ucal_\Cbar^\minuszero=g-\rank\Fcal_\Cbar^\minuszero$ and $\dim_\Qbb V=d\cdot\dim_FL$.
\end{proof}

Similarly we have a general formula:

\begin{corollary}[(Formula of $\SU(1,1)$-type)]\label{formula of SU(1,1)-type}
Let $M\subset\Acal_V$ be the same as in Corollary \ref{formula of SU(n,1)-type},
and $C\subset M$ the Shimura curve cut out by $U$ with $U$ positive definite of dimension $n-1$ along $\tau$.
Write $\Ecal_C=\Fcal_C\oplus\Ucal_C$ be the Higgs bundle decomposition on $C$ with $\Ucal_C$ the maximal unitary Higgs subbundle.
Then $$\begin{aligned}
\rank\Ucal_C^\minuszero&=\frac{1}{2}\dim_\Qbb V_0+\sum_{m=1}^n\frac{\dim_FL_m\cdot\left(\frac d2\cdot \binom{n+1}{m} -\binom{n-1}{m-1}\right)}{\binom{n+1}{m}},\\
\rank\Fcal_C^\minuszero&=\sum_{m=1}^n\dfrac{\dim_FL_m\cdot\binom{n-1}{m-1}}{\binom{n+1}{m}}.
\end{aligned}$$
\end{corollary}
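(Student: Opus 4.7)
The plan is to reduce this corollary to Proposition \ref{decomposition over a curve} by exploiting additivity of the Higgs bundle decomposition under direct sums of symplectic representations. First, I would use the given decomposition
$$V \isom V_0 \oplus \bigoplus_{m=1}^{n}\Res_{F/\Qbb} L_m$$
of symplectic $\Qbb$-representations of $\Gbf^\der=\Res_{F/\Qbb}\SU(H)$. Restricting the associated rational PVHS on $\Acal_V$ to $C$ gives a direct sum decomposition of the canonical Higgs bundle $\Ecal_C$ into corresponding summands. Since the maximal unitary Higgs subbundle is characterized by the maximal subrepresentation of $\pi_1(C)$ factoring through a compact unitary group (as recalled in Remark \ref{representations of fundamental groups}), this selection commutes with direct sums, so it suffices to compute $\rank\Ucal^{-1,0}$ and $\rank\Fcal^{-1,0}$ on each summand separately and add them up.

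Next I would treat the trivial summand $V_0$ on its own: being trivial as a $\Gbf^\der$-representation, it restricts trivially along $\Gamma_C\mono\pi_1(M)\ra\Gbf^\der(\Rbb)$, so the corresponding Higgs subbundle is entirely unitary. Since it still underlies a $\Qbb$-PVHS of Hodge type $\{(-1,0),(0,-1)\}$ with both Hodge summands of equal rank, it contributes $\tfrac12\dim_\Qbb V_0$ to $\rank\Ucal_C^{-1,0}$ and zero to $\rank\Fcal_C^{-1,0}$.

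For each $m\in\{1,\dots,n\}$, I would apply Proposition \ref{decomposition over a curve} directly to the sub-datum obtained by replacing $V$ with $\Res_{F/\Qbb}L_m$, which is $\tau$-primary of type $\Lambda_m$ by hypothesis and has $\Qbb$-dimension $d\cdot\dim_F L_m$. Plugging this value into the formulas of Proposition \ref{decomposition over a curve} yields that this summand contributes exactly
$$\frac{\dim_F L_m\cdot\bigl(\tfrac{d}{2}\binom{n+1}{m}-\binom{n-1}{m-1}\bigr)}{\binom{n+1}{m}}$$
to $\rank\Ucal_C^{-1,0}$ and $\dim_F L_m\cdot\binom{n-1}{m-1}/\binom{n+1}{m}$ to $\rank\Fcal_C^{-1,0}$. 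Summing over $m$ and adding the $V_0$-contribution produces the claimed identities.

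There is no real obstacle: the only point that needs mild justification is the compatibility of the unitary-part selection with direct sums, which follows immediately from the semisimplicity of representations of $\pi_1(C)$ and the fact that the Simpson correspondence takes direct sums of representations to direct sums of Higgs bundles. All numerical work is already contained in Proposition \ref{decomposition over a curve}, and the corollary is essentially a bookkeeping statement summing those contributions over the isotypic decomposition of $V$.
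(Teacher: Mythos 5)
Your proof is correct and follows the same route as the paper's: both decompose $V$ into the trivial piece $V_0$ and the $\tau$-primary pieces $\Res_{F/\Qbb}L_m$, observe that $V_0$ lands entirely in the unitary part, and apply Proposition \ref{decomposition over a curve} component by component; your version just spells out the additivity of the maximal-unitary selection under direct sums, which the paper leaves implicit.
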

\begin{proof}
This is simply because $\rank\Fcal_C+\rank\Ucal_C=\dim_\Qbb V=\dim_\Qbb V_0+d\dim_F\left(\bigoplus\limits_{m=1}^nL_m\right)$ and $V_0$ only contributes to the unitary part.
\end{proof}

\subsection{Proof of the main result in the unitary case}\label{proof of the main result in the unitary case}
Applying the formulas for the rank of Higgs bundles on a Shimura curve contained in a Shimura subvariety of $\SU(n,1)$-type given by a primary symplectic representation of type $\Lambda_m$,
we are able to prove our main theorem in the unitary case.
\begin{proof}[Proof of Theorem {\rm \ref{Main theorem}}]
From the inclusion of Shimura subdata
$$(\Gbf^W,X^W)\subset(\Gbf^H,X^H)\subset(\GSp_V,\Hscr_V)$$
with $V$ a symplectic $\Qbb$-space of dimension $2g$, $h:H\times H\ra E$ an Hermitian space subject to the natural constraints of signatures of $\SU(n,1)$-type and $H=U\oplus W$ an orthogonal decomposition with $U$ positive definite of dimension $n-1$ along the embedding $\tau:F\ra \Rbb$,
we get a Shimura curve $C$ in $M$ defined by $(\Gbf^W,X^W)$ of $\SU(1,1)$-type, with $(\Gbf^W)^\der=\Res_{F/\Qbb}\SU(W)$. The symplectic representation of $(\Gbf^W)^\der$ is restricted from $(\Gbf^H)^\der\mono\Sp_V$,
which is the scalar restriction from $F$ to $\Qbb$ of $\lambda:\SU(H)\mono\Sp_L$ for some symplectic representation over $L$,
such that $L\otimes_{F,\tau}\Rbb\isom\Lambda_m^{\oplus N}$ for some integer $N>0$.

In this case we have $\dim_\Qbb V=2g=2Nd\binom{n+1}{m}$ with $d=[F:\Qbb]$.
From Proposition \ref{decomposition over a curve} it follows that in the decomposition of the Higgs bundles over
the compactification $\Cbar$ of the Shimura curve $C$ defined by the Shimura datum $(\Gbf^W,X^W)$,
we have
\begin{equation}\label{eqnpfof main theorem 1}
\rank\Fcal_\Cbar^{\minuszero}=\dfrac{\dim_\Qbb V\cdot \binom{n-1}{m-1}}{d\cdot \binom{n+1}{m}}=\dfrac{2gm(n-m+1)}{dn(n+1)}.
\end{equation}
According to Theorem \ref{exclusion of Shimura curves}, if $\rank\Fcal_\Cbar^{-1,0}\leq(g-2)/5$, i.e.,
\begin{equation}\label{eqnpfof main theorem 2}
\left(1- \frac{10m(n-m+1)}{dn(n+1)}\right)\cdot g\geq 2,
\end{equation}
then $C$ is not contained generically in the Torelli locus $\Tcal_g$.

Now let $C'$ be a Hecke translate of $C$, given by $(q\Gbf^Wq^\inv,qX^W;qX^{W,+})$ for some $q\in(\Gbf^{H})^\der(\Qbb)^+$, where we take the derived group because the center of $\Gbf^H(\Qbb)^+$ does not contribute to the Hecke correspondences. Then by scalar restriction we have $q\in\SU(H)(F)$, which sends the orthogonal decomposition $H=U\oplus W$ into $H=qU\oplus qW$, with $qU$ positive definite along $\tau$, and $q^W$ of signature $(1,1)$ along $\tau$. Hence the arguments above gives the same inequality for $C'$. When $q$ runs through $(\Gbf^H)^\der(\Qbb)^+$, we obtain a Zariski dense subset of curves, none of which is contained generically in $\Tcal_g$, hence the conclusion follows from Lemma \ref{reduction to the open Torelli locus}.
\end{proof}

Before entering the proof to Corollary \ref{Main corollary}, we first deduce the following corollary from Theorem \ref{Main theorem}.
\begin{corollary}\label{pfof Main corollary}
Under the assumptions and notations of Theorem \ref{Main theorem}, the Shimura subvariety of $\SU(n,1)$-type is not contained generically in the Torelli locus $\calt_g$ if:
\begin{itemize}
\item $d\geq \frac{12}{n+1}$ when $m=1$;
\item $d\geq \frac{4(5n-4)}{n(n+1)}$ when $m=2$;
\item $d\geq \frac{12+30(n-1)(n-2)}{(n+1)n(n-1)}$ when $m=3$;
\item $d\geq3$ when $m\geq 4$.
\end{itemize}
\end{corollary}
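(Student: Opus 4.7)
The plan is to deduce the corollary directly from the numerical inequality \eqref{eqnassumption in main theorem} of Theorem \ref{Main theorem} by bounding $g$ from below and simplifying the resulting expression with a binomial identity. Recall that under the assumptions of Theorem \ref{Main theorem} we have the equality $g=Nd\binom{n+1}{m}$ for some integer $N\geq 1$, so in particular $g\geq d\binom{n+1}{m}$. The strategy is to substitute this lower bound into \eqref{eqnassumption in main theorem} to obtain a cleaner sufficient condition that depends only on $d$, $n$, and $m$.

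Concretely, I would first record the elementary identity
\[
\binom{n+1}{m}\cdot\frac{m(n-m+1)}{n(n+1)}=\binom{n-1}{m-1},
\]
which follows from expanding the factorials. Multiplying through, the inequality
\[
\left(1-\frac{10m(n-m+1)}{dn(n+1)}\right)\cdot d\binom{n+1}{m}\geq 2
\]
becomes
\[
d\binom{n+1}{m}-10\binom{n-1}{m-1}\geq 2.
\]
By Theorem \ref{Main theorem}, once this inequality holds for the given $(d,n,m)$, the Shimura subvariety is not contained generically in $\Tcal_g$.

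It then remains to unwind this inequality case by case. For $m=1$ we get $d(n+1)-10\geq 2$, equivalent to $d\geq 12/(n+1)$. For $m=2$ the inequality reads $d\cdot n(n+1)/2-10(n-1)\geq 2$, i.e.\ $d\geq 4(5n-4)/\bigl(n(n+1)\bigr)$. For $m=3$ it becomes $d\cdot(n+1)n(n-1)/6-5(n-1)(n-2)\geq 2$, i.e.\ $d\geq\bigl(12+30(n-1)(n-2)\bigr)/\bigl((n+1)n(n-1)\bigr)$. Finally, for $m\geq 4$ one uses the elementary estimate $\binom{n+1}{m}/\binom{n-1}{m-1}=(n+1)n/\bigl(m(n-m+1)\bigr)$ and the restriction $n\geq m\geq 4$ to verify $3\binom{n+1}{m}-10\binom{n-1}{m-1}\geq 2$ directly; the bound is already visible for $(n,m)=(4,4),(5,4),(6,4)$ and only improves for larger $n$ or $m$.

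The main obstacle will be the bookkeeping in the case $m\geq 4$: one has to check the elementary inequality uniformly in $(n,m)$ with $n\geq m\geq 4$, and in particular rule out small-$n$ corner cases. I expect that a short monotonicity argument in $n$ (with $m$ fixed) together with a base check at $n=m$ will suffice, since the ratio $\binom{n+1}{m}/\binom{n-1}{m-1}$ grows without bound as $n\to\infty$ and is already $\geq 5$ at $n=m$ for $m\geq 4$.
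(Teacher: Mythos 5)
Your plan follows the same route as the paper's own proof: reduce \eqref{eqnassumption in main theorem} via $g\geq d\binom{n+1}{m}$ to a sufficient condition on $(d,n,m)$, then unwind the cases. Your use of the identity $\binom{n+1}{m}\cdot\tfrac{m(n-m+1)}{n(n+1)}=\binom{n-1}{m-1}$ is a tidy cosmetic rewrite of the paper's equivalent form $d\geq \tfrac{2}{\binom{n+1}{m}}+\tfrac{10m(n-m+1)}{n(n+1)}$, and the cases $m=1,2,3$ then unwind to exactly the stated thresholds, as in the paper.

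The gap is in the $m\geq 4$ step. You argue from the ratio $\binom{n+1}{m}/\binom{n-1}{m-1}=\tfrac{n(n+1)}{m(n+1-m)}$, observing it equals $m+1\geq 5$ at $n=m$ and tends to infinity as $n\to\infty$, and you infer a monotonicity-plus-base-check suffices. But this ratio is \emph{not} monotone in $n$: for $m=4$ it drops to $\tfrac{6\cdot 7}{4\cdot 3}=3.5$ at $n=6$, and more generally it dips toward $4n/(n+1)<4$ whenever $m$ is near $(n+1)/2$. So the claim that things "only improve for larger $n$" is not justified as stated, and a naive monotonicity argument on the ratio would fail. The target inequality $3\binom{n+1}{m}-10\binom{n-1}{m-1}\geq 2$ is nevertheless true on the whole range $n\geq m\geq 4$ (the tight value is $5$, attained e.g.\ at $(n,m)=(4,4),(5,4),(6,4)$), but establishing it needs a real argument. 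One can use the finite-difference identity $g_m(n+1)-g_m(n)=g_{m-1}(n)$ for $g_m(n):=3\binom{n+1}{m}-10\binom{n-1}{m-1}$, combined with the base verification $g_3(k)\geq 0$ for $k\geq 3$, to show $g_m$ is non-decreasing in $n$ once $m\geq 4$. The paper instead checks the finitely many pairs with $n\leq 8$ directly, and for $n\geq 9$ applies $\binom{n+1}{m}\geq n+1$ and the AM--GM bound $m(n+1-m)\leq (n+1)^2/4$ to get $\tfrac{2}{n+1}+\tfrac{5(n+1)}{2n}\leq 3$, which sidesteps the non-monotonicity entirely and is probably the cleaner way to close the gap.
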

\begin{proof}
By Theorem \ref{Main theorem}, it suffices to show that the condition \eqref{eqnassumption in main theorem} is satisfied under our assumptions.
Since $g=Nd\binom{n+1}{m}$, we have $g\geq d\binom{n+1}{m}$.
Hence \eqref{eqnassumption in main theorem} is satisfied if
\begin{equation}\label{eqnpfofmaincorollay 1}
d\geq \frac{2}{\binom{n+1}{m}}+\frac{10m(n-m+1)}{n(n+1)}.
\end{equation}
This is satisfied if $d\geq \frac{12}{n+1}$ when $m=1$, or $d\geq \frac{4(5n-4)}{n(n+1)}$ when $m=2$,
or $d\geq \frac{12+30(n-1)(n-2)}{(n+1)n(n-1)}$ when $m=3$.
To complete the proof,
it suffices to prove that if
\begin{equation}\label{eqnpfofmaincorollay 2}
\frac{2}{\binom{n+1}{m}}+\frac{10m(n-m+1)}{n(n+1)}\leq 3, \qquad \text{if~}m\geq 4.
\end{equation}
Note that $n\geq m\geq 4$. It is not difficult to check that \eqref{eqnpfofmaincorollay 2} is satisfied for $n\leq 8$.
Now consider the case $n\geq 9$. Since $\binom{n+1}{m}\geq n+1$ and $m(n+1-m)\leq \frac{(n+1)^2}{4}$,
one gets
$$\frac{2}{\binom{n+1}{m}}+\frac{10m(n-m+1)}{n(n+1)} \leq \frac{2}{n+1}+\frac{5(n+1)}{2n} \leq 3, \qquad \text{if~}n\geq 9$$which completes the proof.\end{proof}

\begin{proof}[Proof of Corollary {\rm\ref{Main corollary}}]
The case $m=1$ is already done. When $m\geq 2$, we have $n\geq m$, hence the conditions of Corollary \ref{pfof Main corollary} are satisfied if $d\geq 4$, or $d\geq 3$ and $n\geq 6$.
\end{proof}

\section{Symplectic representations in the orthogonal case}\label{section Symplectic representations in the orthogonal case}
In this section we treat the orthogonal case, i.e., symplectic representations of spin groups.

\subsection{Symplectic spinor representations}

We first recall some facts about split quadratic spaces and spinor representations, details of which are parallel to the case over $\Cbb$ in \cite[Chapter 6]{goodman wallach}.
\begin{definition}[(Split quadratic spaces)]\label{split quadratic spaces} A quadratic space $(H,Q)$ of dimensional $N$ over $F$ is said to be split over $F$ if:
\begin{list}{}
{\setlength{\labelwidth}{1.5cm}
\setlength{\leftmargin}{1.5cm}}
\item[$N$ even:] it is isomorphic to $T\oplus T^\vee$ for some $F$-vector space $T$ of dimension $N/2$ with $T^\vee$ its dual, such that $Q$ is equivalent to the pairing $((u,u^{\vee}),(v,v^\vee))\mapsto u^\vee(v)+v^\vee(u)$;

\item[$N$ odd:] it is isomorphic to $F\oplus T\oplus T^\vee$ for some $F$-vector space $T$ of dimension $(N-1)/2$ with $T^\vee$ its dual, such that $Q$ is equivalent to the pairing $((a,u,u^\vee),(b,v,v^\vee))\mapsto c_0ab+u^\vee(v)+v^\vee(u)$ for some constant $c_0\neq 0$. In this case $F$ is just a one-dimensional subspace orthogonal to $T\oplus T^\vee$.
\end{list}
The decompositions above are referred to as isotropic splittings of $(H,Q)$.
When $F$ is algebraically closed, any quadratic space is split.
It is also clear that when $(H,Q)$ is split over $F$, the $F$-group $\Spin(H)$ is split,
i.e., it admits a maximal $F$-torus which is split over $F$.
\end{definition}

When $(H,Q)$ is split over $F$ of dimension $N$, we have the following absolutely irreducible representations of $\Spin(H)$, called the spinor representations.
Using an $F$-basis $e_1,\cdots,e_m$ of $T$ and its dual basis $e_1^\vee,\cdots,e_m^\vee$, the $m$-dimensional split $F$-torus $\Tbf=\{(t_1,\cdots,t_m):t_j\in\Gbb_{\mrm, F} \}$
acts on $T$ by coordinates and on $T^\vee$ by the inverse coordinate functions: $t_j(a_je_j)=(t_ja_j)e_j$ and $t_j(b_je_j^\vee)=(t_j^\inv b_j)e_j^\vee$.
In this case $\Tbf$ is a split maximal $F$-torus in $\Spin(H)$, and in the corresponding root system  we can write down an explicit basis of simple roots $\epsilon_1,\cdots,\epsilon_m$
constructed out of the dual bases of $T$ and $T^\vee$, which describes the spinor representations  as follows:
\begin{list}{}
{\setlength{\labelwidth}{2.3cm}
\setlength{\leftmargin}{2.4cm}}
\item[$N=2m$:] the representation spaces are $P_+(T)=\wedge_F^{even}T$ and $P_-(T)=\wedge_F^{odd} T$, both of dimension $2^{m-1}$, and they are not isomorphic to each other; the highest weight of $P_\pm$ is $\omega_\pm=(\epsilon_1+\cdots+\epsilon_{m-1}\pm\epsilon_m)/2$ (they are also called the half spin representations); 

\item[$N=1+2m$:] the representation space is the full exterior $F$-algebra $P(T)=\wedge_FT$, of dimension $2^{m}$, with highest weight $\omega=(\epsilon_1+\cdots\epsilon_m)/2$. In particular, when $m=1$, $\Spin(H)\isom\SL_2$, and the spinor representation is isomorphic to the standard identity representation.  
\end{list}

Note that if $H=U\oplus T\oplus T^\vee$ is odd-dimensional with $U$ of dimension 1 orthogonal to $T\oplus T^\vee$,
then by restricting $Q$ to $H'=T\oplus T^\vee$ we get a quadratic subspace $H'$ in $H$ of codimension 1,
and in this case the restriction of the representation on $P(T)=\wedge_FT$ to $\Spin(H')$ becomes the direct sum of
the two spinor representations $P_+(T)$ and $P_-(T)$. Similarly, if $H=T\oplus T^\vee$ is even-dimensional,
with $T=U'\oplus T'$ and $T^\vee=U'^\vee\oplus T'^\vee$,
we may take a non-isotropic orthogonal decomposition $U'\oplus U'^\vee=U''\oplus U$ into two one-dimensional subspaces,
and form the $F$-subgroup $\Spin(H')$ for $H'=U\oplus T'\oplus T^\vee$.
In this case the restrictions of two spinor representations $P_+(T)$ and $P_-(T)$  to $\Spin(H')$
are both isomorphic to $P(T')$.
We refer to \cite[Chapter 6]{goodman wallach} for formulas of  weights of spinor representations using
which the decompositions above are verified directly.

In this setting the classification of Satake (cf. \cite[\S\,3.5]{satake embedding} and \cite[\S\,8.3]{satake classification}) gives:

\begin{theorem}[(Symplectic representations of spin groups)]\label{symplectic representations of spin groups}
{\rm (1)} Let $(H,Q)$ be a quadratic space of signature $(n,2)$ over $\Rbb$, and let $P$ resp. $P_\pm$ be the spin representations of $\Spin(H)_\Cbb$,
with $\dim_\Rbb H$ being odd resp. even. Then there are canonically defined Hermitian forms on these spaces of signature $(2^r,2^r)$ with $r=[(n+1)/2]$,
and up to isomorphisms the restriction $\Res_{\Cbb/\Rbb}P$ resp. $\Res_{\Cbb/\Rbb}P_\pm$ are the only irreducible symplectic representations of the $\Rbb$-group $\Spin(H)$, using the imaginary part of the Hermitian forms.

{\rm (2)} Let $(\Gbf,X)\subset(\GSp_V,\Hscr_V^\pm)$ be a Shimura subdatum of $\SO(n,2)$-type with $\Gbf^\der=\Res_{F/\Qbb}\Spin(H)$ for some quadratic space $(H,Q)$ over some totally real field subject to the constraints of signatures, such that the representation $\rho:\Gbf^\der\ra\Spin_V$ admits no trivial subrepresentations. Then $\rho$ is restricted from $\lambda:\Spin(H)\ra\Sp_L$ for some symplectic space $L$ over $F$, and that when we take the base change along $\tau$, $\lambda_\tau$ splits into a direct sum of copies of spin representations.

\end{theorem}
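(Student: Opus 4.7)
The plan is to derive both parts from Satake's classification of symplectic representations of reductive groups of Hermitian type (the body of results summarized in \cite{satake embedding} and \cite{satake classification}), combined with explicit weight-theoretic information about spin groups and standard Galois-theoretic bookkeeping for scalar restrictions.

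For part (1), I would first base-change to $\Cbb$ and enumerate the absolutely irreducible representations of $\Spin(H)_\Cbb\isom\Spin_{n+2}(\Cbb)$ through highest weight theory for a root system of type $B_r$ or $D_r$. Let $K=(\Spin(n)\times\Spin(2))/\mu_2$ be the maximal compact subgroup of $\Spin(n,2)(\Rbb)$, and let $H_0\in\Lie K$ be the element defining the Hermitian complex structure on the symmetric domain. Satake's condition (H2) of Section \ref{subsection Real symplectic representations} forces $\rho(H_0)$ to act with eigenvalues in $\{\pm\tfrac{1}{2}\ibf\}$, reflecting the Hodge type $\{(-1,0),(0,-1)\}$. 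A scan of the fundamental weights of $B_r$ and $D_r$ shows that the spin weight $\omega$, respectively the two half-spin weights $\omega_\pm$, is the unique fundamental weight compatible with this eigenvalue restriction, the standard representation on $H_\Cbb$ being orthogonal and thus excluded. I would then construct the invariant Hermitian form from the unitary structure on the compact real form $\Spin(n+2)$, and compute its signature by counting the $H_0$-weights of each sign on the weight spaces; this yields the claimed $(2^r,2^r)$. Finally, since the spin representations of $\Spin(n,2)$ do not admit a real structure compatible with the complex one (a claim checked by inspecting complex conjugation on the highest weight), the only irreducible \emph{real} symplectic representations are the scalar restrictions $\Res_{\Cbb/\Rbb}P$ and $\Res_{\Cbb/\Rbb}P_\pm$, with the symplectic form being the imaginary part of the preserved Hermitian form.

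For part (2), I would invoke the standard description of representations of scalar restrictions: every $\Qbb$-irreducible representation of $\Res_{F/\Qbb}\Spin(H)$ is of the form $\Res_{F/\Qbb}\lambda$ for some $F$-irreducible $\lambda\colon\Spin(H)\to\GL_L$, indexed by $\mathrm{Gal}(\Qac/\Qbb)$-orbits on the set of absolutely irreducible representations of $\Spin(H)_{\Qac}$ via the bijection $F\otimes_\Qbb\Qac\isom\Qac^d$. An $F$-linear symplectic form on $L$ yields a $\Qbb$-linear symplectic form on $\Res_{F/\Qbb}L$ upon composition with $\tr_{F/\Qbb}$, so each non-trivial symplectic $\Qbb$-irreducible summand of $\rho$ arises this way; the hypothesis that $\rho$ admits no trivial subrepresentations eliminates the zero summand. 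Base-change along the distinguished embedding $\tau$ then produces $\lambda_\tau\colon\Spin(H_\tau)=\Spin(n,2)\to\Sp_{L_\tau}$, a real symplectic representation, which by part (1) decomposes into copies of the irreducible symplectic spin representations.

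The main obstacle I anticipate is the explicit signature computation for the invariant Hermitian form in part (1): it requires a concrete model of the spin representation — typically realized as the exterior algebra on a maximal isotropic subspace after complexification — and careful tracking of which weight spaces of a compact Cartan in $\Lie K$ contribute to each sign, with separate treatment of the cases $n+2$ even versus odd, together with an additional check that the two half-spin representations in the even case define non-isomorphic real symplectic representations.
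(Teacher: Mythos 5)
The paper does not actually prove this theorem: it is stated as a direct consequence of Satake's classification, with a pointer to \cite[\S\,3.5]{satake embedding} and \cite[\S\,8.3]{satake classification}, so there is no ``paper's proof'' to compare against. Your strategy---(H2) plus highest-weight theory to single out the spin representations, a weight-count for the signature, and Galois-descent bookkeeping for scalar restrictions---is indeed the shape of Satake's argument. But two specific claims in your sketch are false as stated, and they are precisely the places where Satake's analysis is delicate.

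\textbf{Part (1): the real-type obstruction.} You assert that the spin representations of $\Spin(n,2)$ ``do not admit a real structure compatible with the complex one,'' justified by ``inspecting complex conjugation on the highest weight.'' This is wrong for some $n$. For $n=3$ one has $\Spin(3,2)\isom\Sp_4(\Rbb)$ under the exceptional isogeny $B_2=C_2$, and the spin representation of $\Spin_5(\Cbb)\isom\Sp_4(\Cbb)$ is the standard $4$-dimensional representation, which restricted to $\Sp_4(\Rbb)$ visibly carries an invariant real structure $\Rbb^4\subset\Cbb^4$. Consequently $\Res_{\Cbb/\Rbb}P\isom\Rbb^4\oplus\Rbb^4$ is \emph{reducible}, and the genuinely irreducible symplectic representation is the $4$-dimensional $(\Rbb^4,\psi_{\mathrm{std}})$, not $\Res_{\Cbb/\Rbb}P$. (The displayed theorem is itself a little loose here: its $\Res_{\Cbb/\Rbb}$ phrasing is only correct when the spin representation is of complex or quaternionic type over $\Rbb$, and the signature should read $(2^{r-1},2^{r-1})$, i.e., $\dim_\Cbb P=2^r$ with a balanced form, as your own $H_0$-weight count would in fact give.) More importantly, ``conjugation on the highest weight'' is not a valid test for the existence of a real structure---it can at best detect that $P\not\isom\bar P$, i.e., complex type; it cannot distinguish real type from quaternionic type, which is a Frobenius--Schur/Brauer-group question and is exactly where the answer depends on $n\bmod 8$ and on the real form.

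\textbf{Part (2): scalar restrictions.} The assertion ``every $\Qbb$-irreducible representation of $\Res_{F/\Qbb}\Spin(H)$ is of the form $\Res_{F/\Qbb}\lambda$'' is not true without a qualification. Over $\Qac$ one has $\Res_{F/\Qbb}\Spin(H)_{\Qac}\isom\prod_\sigma\Spin(H)^\sigma_{\Qac}$, and there are $\Qbb$-irreducible representations obtained by Galois descent of external tensor products $\boxtimes_\sigma\lambda_\sigma$ involving \emph{more than one} nontrivial factor; these are not scalar restrictions. That such tensor-product representations cannot satisfy condition (H2) is a nontrivial point of Satake's classification (it is precisely what forces the ``$\tau$-primary'' structure in the paper's terminology) and has to be argued, not assumed; your sketch simply invokes the conclusion. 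Once you do restrict attention to (H2)-symplectic representations, the scalar-restriction structure and the identification of $\lambda_\tau$ with copies of the spin representation do follow, but the logical order matters: you need Satake's constraint \emph{first}, then the descent description, and only then the $\tr_{F/\Qbb}$ construction of the global symplectic form.
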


\begin{proposition}[(Unitary part over a Shimura subvariety of $\SO(n,2)$-type)]\label{unitary part over a Shimura subvariety of SO(n,2)-type}
Let $M\subset\Acal_V$ be a Shimura subvariety of $\SO(n,2)$-type, i.e., defined by some Shimura subdatum $(\Gbf,X;X^+)\subset(\GSp_V,\Hscr_V^\pm;\Hscr_V^+)$ of $\SO(n,2)$-type as in Theorem \ref{symplectic representations of spin groups}{\,\rm(2)}, using a quadratic space $(H,Q)$ over a totally real number field $F$.
Assume that the symplectic representation $\rho:\Gbf^\der\isom\Res_{F/\Qbb}\Spin(H)\mono\Sp_V$ admits no trivial subrepresentations,
and that $\Ecal_M=\Fcal_M\oplus\Ucal_M$ is the decomposition of the Higgs bundles on $\Ecal_M$ induced by the universal family over $\Acal_V$, with $\Ucal_M$ the maximal unitary Higgs subbundle.
Then
$$\rank\Ucal_M^\minuszero=\frac{(d-1)g}{d},\quad\rank\Fcal_M^\minuszero=\frac{g}{d},\qquad \text{with~}d=[F:\Qbb].$$
\end{proposition}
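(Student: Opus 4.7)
The plan is to follow the argument of Proposition 5.3 in the unitary case almost verbatim, the essential simplification here being that no bookkeeping of Satake type $\Lambda_m$ is required because \emph{every} non-trivial summand of $\lambda_\tau$ is of spinor type and, as I will argue, contributes zero to the unitary part. The absence of trivial subrepresentations in the hypothesis is used precisely to rule out the one way in which a unitary piece could sit inside $L_\tau\otimes_\Rbb\Cbb$.

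First I invoke Theorem 6.2(2) to write $\rho=\Res_{F/\Qbb}\lambda$ for some $F$-linear symplectic representation $\lambda:\Spin(H)\to\Sp_L$, giving $V=\Res_{F/\Qbb}L$ and $2g=d\cdot\dim_FL$. Passing to $\Rbb$ and using $F\otimes_\Qbb\Rbb\isom\prod_{j=1}^d\Rbb$ with $\tau=\tau_1$ the non-compact place, we obtain
\[
\Gbf^\der_\Rbb\isom\Spin(H_\tau)\times\prod_{j=2}^d\Spin(H_j),\qquad \rho_\Rbb\isom\bigoplus_{j=1}^d\lambda_j,
\]
where $\Spin(H_\tau)\isom\Spin(n,2)$ is simple non-compact and $\Spin(H_j)\isom\Spin(n+2,0)$ is compact for $j\geq 2$. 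Since $\Gamma_M$ is Zariski dense in $\Gbf^\der_\Rbb$, the decomposition of the underlying complex local system into its maximal unitary piece and its complement is governed by this real factorisation.

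Next I would treat the two types of factors separately. For $j\geq 2$ the Lie group $\Spin(H_j)$ is compact, so $\Gamma_M$ acts on $L_j\otimes_\Rbb\Cbb$ through a compact unitary group, and the total contribution to $\Ucal_M$ is a complex subspace of dimension $(d-1)\dim_FL$. For the non-compact factor, Theorem 6.2(2) identifies $\lambda_\tau$ as a direct sum of (real restrictions of) spinor representations, and after complexification $L_\tau\otimes_\Rbb\Cbb$ becomes a direct sum of complex spin and conjugate-spin irreducible representations of $\Spin(n,2)$, each non-trivial and faithful modulo the finite centre. A non-trivial complex representation of $\Spin(n,2)$ cannot factor through a compact unitary group, for otherwise $\Lie\Spin(n,2)$ would embed as a subalgebra of a compact Lie algebra, in contradiction with its Killing form being indefinite; so $\lambda_\tau\otimes_\Rbb\Cbb$ has no non-zero unitary subrepresentation. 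Consequently $\rank\Ucal_M=(d-1)\dim_FL$, and halving as in Proposition 5.3 (since $\Ecal_M$ is a $\Qbb$-PVHS of type $\{(-1,0),(0,-1)\}$) yields $\rank\Ucal_M^{-1,0}=(d-1)g/d$ and $\rank\Fcal_M^{-1,0}=g/d$.

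The main technical point to verify carefully is the last step: the non-existence of any unitary piece inside $\lambda_\tau\otimes_\Rbb\Cbb$. Here the crucial input is the combination of the simple non-compactness of $\Spin(n,2)$ with the faithfulness (modulo centre) of spinor representations, the latter being what fails for the trivial subrepresentation and explains why the hypothesis on $\rho$ having no trivial subrepresentations is needed.
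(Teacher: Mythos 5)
Your proposal is correct and follows essentially the same route as the paper: reduce to Proposition 5.3 (the $\SU(n,1)$ case), decompose $\rho_\Rbb\isom\bigoplus_j\lambda_j$ over the real places, observe that the $d-1$ compact factors contribute the full unitary part while the non-compact factor at $\tau$ contributes nothing, and halve to get the $(-1,0)$-ranks. The only additional content in your write-up is an explicit Lie-theoretic justification (Killing form / no embedding of a non-compact simple Lie algebra into a compact one) for why $\lambda_\tau\otimes_\Rbb\Cbb$ has no unitary subrepresentation, a point the paper leaves implicit by appealing to irreducibility of the spinor factors.
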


\begin{proof}
The proof is exactly the same as Proposition \ref{primary decomposition over a Shimura subvariety of SU(n,1)-type}
because when we take the base change $\Qbb\ra\Rbb$ we get $\rho_\Rbb=\bigoplus\limits_{j=1,\cdots,d}\lambda_j$ with $\rho=\Res_{F/\Qbb}\lambda$ and $\lambda_j=\lambda\otimes_{F,\tau_j}\Rbb$,
using the same notations as in \ref{symplectic representations of spin groups}(2), and only the component $\lambda_1=\lambda_\tau$ contributes to $\Fcal_M$.
\end{proof}

\begin{corollary}[(Formula of $\SO(n,2)$-type)]\label{formula of SO(n,2)-type}
Let $M\subset\Acal_V$ be a Shimura subvariety of $\SO(n,2)$-type, given by a subdatum $(\Gbf,X;X^+)\subset(\GSp_V,\Hscr_V^\pm;\Hscr_V^+)$,
whose symplectic representation $\rho:\Gbf^\der\isom\Res_{F/\Qbb}\Spin(H)\ra\Sp_V$ is of the form $V=V_0\oplus\Res_{F/\Qbb}\lambda$
with $V_0$ a trivial subrepresentation and $\lambda$ some $F$-linear symplectic representation $\lambda:\Spin(H)\ra\Sp_L$ admitting no trivial subrepresentation.
Then in the decomposition of Higgs bundles $\Ecal_M=\Fcal_M\oplus\Ucal_M$, we have
$$\rank\Ucal_M^\minuszero=\frac{\dim_FV_0+(d-1)\dim_FL}{2},\quad \rank\Fcal_M^\minuszero=\frac{\dim_FL}{2},\qquad \text{with~}d=[F:\Qbb].$$
\end{corollary}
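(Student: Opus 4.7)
The plan is to reduce the statement directly to Proposition~\ref{unitary part over a Shimura subvariety of SO(n,2)-type} by exploiting the ambient direct sum decomposition $V = V_0 \oplus \Res_{F/\Qbb}L$. Since $\Rep(\Gbf^\der/\Qbb)$ is semi-simple and the splitting is $\Gbf^\der$-stable, the induced $\Qbb$-PVHS on $M$ decomposes accordingly as $\Ecal_M = \Ecal_{M,0} \oplus \Ecal_{M,\lambda}$, and the maximal unitary Higgs subbundle respects this decomposition through the Simpson correspondence. It therefore suffices to treat each summand in isolation and then sum the resulting ranks.

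First I would dispose of the trivial summand: since $\Gbf^\der$ acts trivially on $V_0$, the induced representation of the fundamental group $\Gamma_M\subset\Gbf^\der(\Qbb)^+$ is trivial, so the associated Higgs bundle on $M$ is a direct sum of copies of the trivial line bundle with zero Higgs field. This is entirely unitary, and the $\Qbb$-PVHS constraint forcing the $\minuszero$- and $\zerominus$-parts to have equal rank yields $\rank\Ucal_{M,0}^\minuszero = (\dim_\Qbb V_0)/2$ with $\Fcal_{M,0}=0$.

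Next I would apply Proposition~\ref{unitary part over a Shimura subvariety of SO(n,2)-type} to the remaining summand $\Res_{F/\Qbb}\lambda$, which by hypothesis admits no trivial subrepresentation: writing $g_\lambda = \dim_F L/2$ for the genus contributed by this block, the proposition gives $\rank\Ucal_{M,\lambda}^\minuszero = (d-1)\dim_F L/2$ and $\rank\Fcal_{M,\lambda}^\minuszero = \dim_F L/2$. Summing the contributions from the two summands produces exactly the formula stated.

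The proof contains no serious obstacle; the only thing to verify carefully is that the algebraic decomposition of the symplectic representation is compatible with the Higgs bundle decomposition and with the splitting into unitary and non-unitary parts. This compatibility is automatic from the Simpson correspondence together with the semi-simplicity of $\Rep(\Gbf^\der/\Qbb)$, as already discussed in Section~\ref{subsection Higgs bundles on Shimura varieties} and Remark~\ref{representations of fundamental groups}. The argument is the exact mirror of the one used to derive Corollary~\ref{formula of SU(n,1)-type} in the unitary case, with Proposition~\ref{unitary part over a Shimura subvariety of SO(n,2)-type} replacing Proposition~\ref{primary decomposition over a Shimura subvariety of SU(n,1)-type}.
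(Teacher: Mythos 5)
Your proof is correct and follows essentially the same route as the paper: split off the trivial summand $V_0$ (which contributes entirely to the unitary part) and apply Proposition~\ref{unitary part over a Shimura subvariety of SO(n,2)-type} to $\Res_{F/\Qbb}\lambda$, exactly mirroring the deduction of Corollary~\ref{formula of SU(n,1)-type} from Proposition~\ref{primary decomposition over a Shimura subvariety of SU(n,1)-type}. Note that the statement writes $\dim_F V_0$ (as does Proposition~\ref{decomposition over a curve in the orthogonal case}(2)), but $V_0$ is only a $\Qbb$-vector space; your $\dim_\Qbb V_0$ is the correct reading, and it is the one needed for $\rank\Ucal_M^\minuszero + \rank\Fcal_M^\minuszero = g$ to hold and the one actually used in the proof of Theorem~\ref{main theorem in the orthogonal case}.
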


\begin{proof}
This is similar to Corollaries \ref{formula of SU(n,1)-type} and \ref{formula of SU(n,1)-type}, using the fact that the spinor representations are the only non-trivial symplectic representations over $\Rbb$: when $n$ is odd, there is only one spinor representation; when $n$ is even, the two spinor representations over $\Rbb$ are of the same dimensions and same signatures.
\end{proof}

\subsection{Restriction to a Shimura curve}
We first consider the case over $\Rbb$, i.e., study the decomposition of the irreducible symplectic representations of $\Spin(H)$ to $\Spin(W)$ over $\Rbb$, where $H$ is of signature $(n,2)$ and $W$ is of signature $(1,2)$. We assume that $W=U'\oplus W'$ with $U'$ positive definite of dimension 1 and $W'$ negative definite of dimension 2 together with an isotropic splitting over $\Cbb$ given by $W'_\Cbb\isom T_1\oplus T_1^\vee$, and we take a further orthogonal decomposition $H=W\oplus U$ for some definite subspace $U$ of dimension $n-1$. We also assume that
\begin{list}{}
{\setlength{\labelwidth}{1.3cm}
\setlength{\leftmargin}{1.4cm}}
\item[$n$ odd:] $U_\Cbb$ admits an isotropic splitting $U_\Cbb=T_2\oplus T_2^\vee$;
\item[$n$ even:] $U=U''\oplus W''$ with $U''$ one-dimensional orthogonal to $W''$, and $W''_\Cbb=T_2\oplus T_2^\vee$ is an isotropic splitting;
\end{list}

If $n\geq 1$ is odd, then we can take an isotropic splitting $U_\Cbb=T_2\oplus T_2^\vee$,
and it turns out that $T=T_1\oplus T_2$ gives a maximal isotropic decomposition $H_\Cbb=U'_\Cbb\oplus T\oplus T^\vee$.
Thus we have $P(T)\isom P(T_1)\otimes P(T_2)$ on which the Levi $\Rbb$-subgroup $\Spin(W)\times\Spin(U)$ acts,
and the restriction to $\Spin(W)$ is a direct sum of copies of $P(T_1)$.

If $n\geq 2$ is even, then $H=U''\oplus H'$ is an orthogonal decomposition with  $H'=U'\oplus W'\oplus W''$ odd-dimensional. The two spinor representations $P_\pm$ of $\Spin(H)$ both restrict to the spin representation $P(T_1\oplus T_2)$ of $\Spin(H')$, and restricts further to a direct sum of copies of $P(T_1)$ for $\Spin(W)$ because we are reduced to the odd-dimensional case.

We have thus seen that in both cases no additional trivial subrepresentation occurs when restricted to $\Spin(W)$, hence we arrive at the main theorems in the orthogonal case:

\begin{proposition}[(Decomposition over a curve in the orthogonal case)]\label{decomposition over a curve in the orthogonal case}
Let $M\subset\Acal_V$ be a Shimura subvariety of $\SO(n,2)$-type, given by a subdatum $(\Gbf,X)\subset(\GSp_V,\Hscr_V^\pm)$.

{\rm (1)} Assume that the symplectic representation $\rho:\Gbf^\der=\Res_{F/\Qbb}\Spin(H)\ra\GSp_V$ admits no trivial subrepresentations. Let $C\subset M$ be a Shimura curve cut out by an $F$-subspace $U$ of $H$ positive definite of dimension $n-1$ along $\tau$ following the notations of Definition \ref{Shimura data of SO(n,2)-type}, and let $\Ecal_C$ be the Higgs bundle on $C$ induced by the universal abelian schemes on $\Acal_V$, with $\Ecal_C=\Fcal_C\oplus\Ucal_C$ the decomposition into Higgs subbundles, $\Ucal_C$ being the maximal unitary part. Then
$$\rank\Ucal_C^\minuszero=\frac{(d-1)g}{d},\quad\rank\Fcal_C^\minuszero=\frac{g}{d},\qquad \text{with~}d=[F:\Qbb].$$

{\rm (2)} Similarly, when $\rho:\Gbf^\der\ra\Sp_V$ decomposes as $V=V_0\oplus\Res_{F/\Qbb}\lambda$ with $V_0$ a trivial subrepresentation and $\lambda:\Spin(H)\ra\Sp_L$ an $F$-linear symplectic representations admitting no trivial subrepresentation, then
$$\rank\Ucal_C^\minuszero=\frac{\dim_FV_0+(d-1)\dim_FL}{2},\quad \rank\Fcal_C^\minuszero=\frac{\dim_FL}{2},\qquad \text{with~}d=[F:\Qbb].$$
\end{proposition}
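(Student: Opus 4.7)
The plan is to mimic the argument of Proposition~\ref{decomposition over a curve} in the unitary case, paired with the restriction of spinor representations from $\Spin(H_\tau)$ to $\Spin(W_\tau)$ analyzed in the paragraphs immediately preceding the statement. By Remark~\ref{representations of fundamental groups}, what matters is how the algebraic representation $\rho|_{(\Gbf^W)^\der}:\Res_{F/\Qbb}\Spin(W)\to\Sp_V$ decomposes over $\Rbb$. Applying Theorem~\ref{symplectic representations of spin groups}(2), I would first write $\rho=\Res_{F/\Qbb}\lambda$ for some $F$-linear symplectic representation $\lambda:\Spin(H)\to\Sp_L$ whose base change $\lambda_\tau=\lambda\otimes_{F,\tau}\Rbb$ is isomorphic to a direct sum of copies of spinor representations of $\Spin(H_\tau)$; this gives $\rho_\Rbb|_{(\Gbf^W)^\der(\Rbb)}=\bigoplus_{j=1}^d(\lambda|_{\Spin(W)})_j$, where the $j$-th factor is the scalar extension along $\tau_j$.

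Next I would analyze each real component separately. For $j\geq 2$ the subspace $W_j$ inherits definiteness from the definite $H_j$, so $\Spin(W_j)$ is compact and $(\lambda|_{\Spin(W)})_j$ contributes entirely to the unitary part after complexification. For $j=\tau$ the group $\Spin(W_\tau)\isom\Spin(1,2)$ is simple non-compact, and the key claim is that the restriction of each spinor representation of $\Spin(H_\tau)$ to $\Spin(W_\tau)$ contains no trivial subrepresentation. This is precisely the content of the discussion preceding the statement: when $n$ is odd, the identification $P(T)\isom P(T_1)\otimes P(T_2)$ with $T=T_1\oplus T_2$ shows that the restriction is a sum of copies of the spinor representation $P(T_1)$ of $\Spin(W_\tau)$, while when $n$ is even one first reduces to the odd-dimensional case via an orthogonal splitting $H=U''\oplus H'$. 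Since $P(T_1)$ is non-trivial and irreducible, $\lambda_\tau|_{\Spin(W_\tau)}$ is purely non-unitary and contributes entirely to $\Fcal_C$.

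Combining the two types of contributions yields $\rank\Fcal_C=\dim_\Rbb L_\tau=\dim_F L$ and $\rank\Ucal_C=(d-1)\dim_F L$; taking the $(-1,0)$-part (which halves the ranks, since $\Ecal_C$ arises from a $\Qbb$-PVHS) and substituting $\dim_F L=2g/d$ produces the formulas of Part~(1). For Part~(2) I would simply observe that a trivial subrepresentation $V_0$ of $\Gbf^\der$ remains trivial under restriction to $(\Gbf^W)^\der$, contributing its full $\Qbb$-dimension to $\rank\Ucal_C$; adding this to the count from Part~(1) applied to $\Res_{F/\Qbb}\lambda$ gives the stated formulas. The main obstacle is the no-trivial-subrepresentation claim for $\lambda_\tau|_{\Spin(W_\tau)}$, but this has already been handled by the weight-theoretic analysis via successive isotropic splittings carried out just before the statement, so the work is reduced to bookkeeping of ranks.
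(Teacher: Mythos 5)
Your proposal is correct and follows essentially the same route as the paper's own proof, which dispatches the proposition in two lines by observing that the preceding isotropic-splitting analysis shows no new unitary summand arises when restricting spinor representations from $\Spin(H_\tau)$ to $\Spin(W_\tau)$, and then citing Corollary~\ref{formula of SO(n,2)-type}. Your write-up simply unpacks that bookkeeping (the $j\geq 2$ versus $j=\tau$ split over $\Rbb$, halving ranks for the $(-1,0)$-part, and the trivial summand $V_0$ in part (2)) in the same way it is done for the unitary analogue in Proposition~\ref{decomposition over a curve}.
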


\begin{proof}
(1) The formula for $\rank\Fcal_C^\minuszero$ is immediate, as in this case no additional unitary part occurs when the representation is restricted from $\Spin(H_\tau)$ to $\Spin(W_\tau)$.
(2) is immediate from (1) and Corollary \ref{formula of SO(n,2)-type}.
\end{proof}

\begin{proof}[Proof of Theorem {\rm \ref{main theorem in the orthogonal case}}]
Following the notations in Proposition \ref{decomposition over a curve in the orthogonal case}\,(2), the inequality $\rank \Fcal^\minuszero_C=(2g-\dim_\Qbb V_0)/2d\leq(g-2)/5$ holds when $d\geq 6$.
In fact this is automatic if $d\geq 7$, and when $d=6$, it still holds because $g=\dim_\Qbb V/2$ is at least $d\cdot2^{[n+2]/2}$.\end{proof}

\begin{remark}
If in contrary that the totally real field $F$ is $\Qbb$, i.e., $d=1$, our arguments do not work for Shimura subvarieties of $\SO(n,2)$-type.
In this case the results in \cite{luzuo14} do provide some complments:
the symplectic representation does not give rise to unitary Higgs bundles when restricted to Shimura curves provided that there is no trivial subrepresentation, i.e., $V_0=0$
\big(cf. Proposition \ref{decomposition over a curve in the orthogonal case}\,(2)\big),
hence such curves, as well as such Shimura subvarieties, are not contained generically in $\Tcal_g$ for $g>11$.
\end{remark}


\begin{thebibliography}{ABCD0}
\bibitem[And92]{andre fixed part} Y. Andr\'e, {\it Mumford-Tate groups and theorem of the fixed part},
Compositio Math.  82  (1992),  no. 1, 1--24.

\bibitem[BB66]{baily borel} W. Baily and A. Borel,            {\it Compactification of arithmetic quotients of bounded symmetric domains},            Annals of mathematics {\bf 84(2)} (1966), 442--528.

\bibitem[Col87]{coleman87} R. F. Coleman,
    {\it Torsion points on curves},
    Galois representations and arithmetic algebraic geometry (Kyoto, 1985 /Tokyo, 1986), 235--247,
    Adv. Stud. Pure Math., 12, North-Holland, Amsterdam, 1987.

\bibitem[Del77]{deligne pspm} P. Deligne, {\it Vari\'et\'es de Shimura: interpr\'etation modulaire, et techniques de construction de mod\`eles canoniques}, (French),
Automorphic forms, representations and L-functions (Proc. Sympos. Pure Math., Oregon State Univ., Corvallis, Ore., 1977),
Part 2, pp. 247--289, Proc. Sympos. Pure Math., XXXIII, Amer. Math. Soc., Providence, R.I., 1979.

\bibitem[dJN89]{djongnoot89}
    J. de Jong and R. Noot,
    {\it Jacobians with complex multiplication},
    Arithmetic algebraic geometry (Texel, 1989), 177--192, Progr. Math., 89,
    Birkh\"auser Boston, Boston, MA, (1991).

\bibitem[dJZ07]{djongzh07} J. de Jong and S.-W. Zhang,
    {\it Generic Abelian Varieties with Real Multiplication are not Jacobians},
    Diophantine geometry, CRM Series, {\bf 4} (2007), 165--172.

\bibitem[DO86]{do86} B. Dwork and A. Ogus,
    {\it Canonical liftings of Jacobians},
    Compositio Math. {\bf58} (1986), 111--131.

\bibitem[FGP14]{frghpe14} P. Frediani, A. Ghigi and M. Penegini,
    {\it Shimura varieties in the Torelli locus via Galois coverings}
    http://arxiv.org/abs/1402.0973v2.

\bibitem[Fuj78]{fujita78} T. Fujita        {\it On K\"ahler fiber spaces over curves}.        J. Math. Soc. Japan 30 (1978), no. 4, 779--794.

\bibitem[GW09]{goodman wallach} R. Goodman and N. Wallach, {\it Symmetry, invariants, and representations} GTM 255, Springer Verlag 2009

\bibitem[Hai99]{hain99} R. Hain,            {\it Locally symmetric families of curves and Jacobians},            Moduli of curves and abelian varieties, 91--108, Aspects Math., E33, Vieweg, (1999).

\bibitem[HN74]{hardernarasihan745} Harder, G.; Narasimhan, M. S.        {\it On the cohomology groups of moduli spaces of vector bundles on curves}.        Math. Ann. 212 (1974/75), 215--248.


\bibitem[Hid04]{hida shimura} H. Hida, {\it $p$-adic automorphic forms on Shimura varieties}, Springer-Verlag 2004.


\bibitem[Hum81]{humphreys linear algebraic groups} J. Humphreys, {\it Linear algebraic groups}, GTM 21, Springer Verlag 1981

\bibitem[Hum10]{humphreys mathoverflow} ------, online discussion, http://mathoverflow.net/questions/27319/the-normalizer-of-a-reductive-subgroup


\bibitem[Kud97]{kudla orthogonal} S. Kudla, {\it Algebraic cycles on Shimura varieties of orthogonal type}, Duke Journal of Mathematics,  86 (1997), pp. 39-78

\bibitem[KR14]{kudla rapoport} S. Kudla and M. Rapoport, {\it Special cycles on unitary Shimura varieties II: global theory}, to appear in Journal f\"ur die reine und angewandte Mathematik.

\bibitem[Kuk10]{kukulies10} S. Kukulies,    {\it On Shimura curves in the Schottky locus},   J. Algebraic Geom. {\bf 19(2)} (2010), 371--397.

\bibitem[LZ13]{luzuo13} X., Lu,\, K., Zuo, {\it On the slope of hyperelliptic fibrations with positive relative irregularity}, http://arxiv.org/abs/1311.7271, to appear in Trans. Amer. Math. Soc.

\bibitem[LZ14]{luzuo14} ------,    {\it The Oort conjecture on Shimura curves in the Torelli locus of curves},   http://arxiv.org/abs/1405.4751

\bibitem[Mil05]{milne05} J. S., Milne,
    {\it Introduction to Shimura varieties},
    Harmonic analysis, the trace formula, and Shimura varieties, 265--378,
    Clay Math. Proc., 4, Amer. Math. Soc., Providence, RI, 2005

\bibitem[MZ14]{mohajerzuo} A. Mohajer and K. Zuo,    {\it On Shimura subvarieties generated by families of abelian covers of $\mathbb P^1$},    http://arxiv.org/abs/1402.1900.


\bibitem[Moo98]{moonen98} B. Moonen,    {\it Linearity properties of Shimura varieties, I},    J. Algebraic Geom. {\bf 7(3)} (1998), 539--567.

\bibitem[Moo10]{moonen10} ------,    {\it Special subvarieties arising from families of cyclic covers of the projective line},    Doc. Math. {\bf 15} (2010), 793--819.

\bibitem[MO11]{mo11} B. Moonen and F. Oort,    {\it The Torelli locus and special subvarieties},   Handbook of moduli. Vol. II, 549--594, Adv. Lect. Math. (ALM), 25, Int. Press, Somerville, MA, 2013.

\bibitem[Mum65]{mumfordGIT} D. Mumford,    {\it Geometric invariant theory},    Ergebnisse der Mathematik und ihrer Grenzgebiete, Neue Folge, Band 34 Springer-Verlag, Berlin-New York 1965.

\bibitem[Noo06]{noot06} R. Noot,    {\it Correspondances de Hecke, action de Galois et la conjecture d'Andr\'e-Oort (d'apr\`es Edixhoven et Yafaev)},
  (French). S\'eminaire Bourbaki. Vol. 2004/2005. Ast\'erisque No. 307 (2006), Exp. No. 942, vii, 165--197.

\bibitem[Oor97]{oort97} F. Oort,    {\it Canonical liftings and dense sets of CM-points},   in: Arithmetic geometry (Cortona, 1994), 228--234,    Sympos. Math., XXXVII, Cambridge Univ. Press, Cambridge, (1997).

\bibitem[Sat65]{satake embedding} I. Satake, {\it Holomorphic imbeddings of symmetric domains into a Siegel space}. Amer. J. Math.  87  1965, 425--461.

\bibitem[Sat66]{satake summary} ------, {\it Symplectic representations of algebraic groups}, 1966, in {\it Algebraic groups and discontinuous subgroups}, Proceedings of Symposia in Pure Mathematics, pp.\,352--357.

\bibitem[Sat67]{satake classification} ------, {\it Symplectic representations of algebraic groups satisfying a certain analyticity condition}.
    Acta Math.  117  1967, 215--279.

\bibitem[Sca11]{scanlon bourbaki} T. Scanlon,    {\it A proof of the Andr\'e-Oort conjecture via mathematical logic, after J. Pila},    S\'eminaire Bourbaki, Expos\'e 1037, 2011



\bibitem[VZ04]{vz04} E. Viehweg and K. Zuo,    {\itshape A characterization of certain Shimura curves in the moduli stack of abelian varieties},    J. Differential Geom. {\bf 66(2)} (2004), 233--287.


\bibitem[Xia87]{xiao87} G. Xiao,        {\it Fibered algebraic surfaces with low slope}.      Math. Ann. 276 (1987), no. 3, 449--466.

\bibitem[Xia92]{xiao92-0} ------,    {\it Irregular families of hyperelliptic curves},
    Algebraic geometry and algebraic number theory (Tianjin, 1989--1990), 152--156,    Nankai Ser. Pure Appl. Math. Theoret. Phys., 3, World Sci. Publ., River Edge, NJ, 1992.

\end{thebibliography}
\end{document}